\title{Topological expansion for Haar-distributed orthogonal matrices and second-order freeness of orthogonally invariant ensembles}
\author{C.\ E.\ I.\ Redelmeier}
\date{March 8, 2013}
\newtheorem{theorem}{Theorem}[section]
\newtheorem{lemma}[theorem]{Lemma}
\newtheorem{proposition}[theorem]{Proposition}
\newtheorem{corollary}[theorem]{Corollary}
\theoremstyle{remark}
\newtheorem{example}[theorem]{Example}
\theoremstyle{definition}
\newtheorem{definition}[theorem]{Definition}
\begin{document}

\maketitle

\begin{abstract}
We present a genus expansion-type expression for the expected values of products of traces of expressions involving Haar-distributed orthogonal matrices.  As with other real genus expansions, nonorientable surfaces appear, in addition to the orientable surfaces of the complex expansion.

We use this expression to demonstrate that independent random matrices which are orthogonally in general position, such as matrices whose distributions are orthogonally invariant, are asymptotically real second-order free.
\end{abstract}

\section{Introduction}

The connection between free probability and random matrices was first described in \cite{MR1094052}.  Many important ensembles of random matrices, including any independent matrices with unitarily or orthogonally invariant distributions, are asymptotically free.

This connection may be extended to the second-order statistics, or fluctuations, of matrices.  Second-order freeness is defined in \cite{MR2216446}, and it is shown that independent matrices which are unitarily in general position are second-order free in \cite{MR2294222}.  (See also \cite{MR2302524}.)  However, the real analogues of the complex matrix models do not generally obey the definition of complex second-order freeness.  A definition for real second-order freeness, satisfied by real Ginibre matrices, Gaussian orthogonal matrices, and real Wishart matrices, is given in \cite{2011arXiv1101.0422R}.

Expected values of traces of many complex random matrices may be calculated as sums over orientable surfaces, in which the order of the term depends only on the genus of the surface (see \cite{MR2036721}, Chapter~3, for details of this construction for Gaussian unitary matrices, and \cite{MR2052516} for complex Wishart matrices).  In the analogous expressions for real random matrices, nonorientable surfaces appear in addition to the orientable ones (see \cite{2011arXiv1101.0422R, 2012arXiv1204.6211R} for expansions for details of the expansions for real Ginibre, Gaussian orthogonal, and real Wishart matrices).  We present a similar expression for Haar-distributed orthogonal matrices, similarly analogous to the expression for Haar-distributed unitary matrices in \cite{MR2294222}.  While this expression resembles those for the other real matrix models, it lacks the property that connected components contribute multiplicatively, so many of the approaches of \cite{2011arXiv1101.0422R} cannot be used.  However, a cumulant approach allows us to consider the contribution of connected components for large matrices.  This diagrammatic representation gives us expressions for the first- and second-order statistics of independent matrices which are orthogonally in general position, demonstrating that they are asymptotically real second-order free.

In Section~\ref{section: notation}, we give the notation, constructions, and lemmas we will be using, including summaries of the cartographic machinery (see \cite{MR0404045, MR2036721}, and \cite{MR1813436, MR2851244, 2011arXiv1101.0422R, 2012arXiv1204.6211R} for the nonorientable case), the annular noncrossing conditions from \cite{MR2052516}, and the Weingarten calculus for Haar-distributed orthogonal matrices from \cite{MR1959915, MR2217291, MR2567222}.  In Section~\ref{section: genus expansion}, we present an exact formula for the expected value of the product of traces of expressions involving Haar-distributed orthogonal matrices and outline how asymptotic values may be computed diagrammatically.  We give an example calculations with diagrams.  In Section~\ref{section: cumulants}, we give exact and asymptotic expressions for the cumulants of traces.  In Section~\ref{section: freeness}, we show that matrices which are orthogonally in general position are real second-order free.  In particular, this demonstrates that independent random matrices whose distributions are orthogonally invariant are real second-order free.

\section{Notation}
\label{section: notation}

For an integer \(n>0\), we denote the set of integers \(\left\{1,\ldots,n\right\}\) by \(\left[n\right]\).  For any set \(I\subseteq\mathbb{Z}\), we let \(-I:=\left\{-k:k\in I\right\}\), and we let \(\pm I=I\cup\left(-I\right)\).

We will let \(\delta:k\mapsto -k\).  For an even \(\varepsilon:I\rightarrow\left\{1,-1\right\}\) (or one which we may extend to an even function on \(\pm I\)), we let \(\delta_{\varepsilon}:k\mapsto\left(-1\right)^{k}k\).

\subsection{Partitions}

\begin{definition}
A {\em partition} of a set \(I\) is a set of subsets \(V_{1},\ldots,V_{n}\subseteq I\) (which we call {\em blocks}) such that \(V_{k}\neq\emptyset\), \(V_{k}\cap V_{l}\) for \(k\neq l\), and \(V_{1}\cup\cdots\cup V_{n}=I\), \(1\leq k,l\leq n\).  We denote the number of blocks in a partition \(\pi\) by \(\#\left(\pi\right)\).  We will denote the set of all partitions of \(I\) by \({\cal P}\left(I\right)\), and the set of all partitions of \(\left[n\right]\) by \({\cal P}\left(n\right)\).

The partitions of \(I\) form a partially-ordered set: if \(\pi,\rho\in{\cal P}\left(I\right)\), then we say that \(\pi\preceq\rho\) (\(\pi\) is finer than \(\rho\) or \(\rho\) is coarser than \(\pi\)) if every block of \(\pi\) is contained in a block of \(\rho\).  In fact, \({\cal P}\left(I\right)\) is a lattice: for \(\pi,\rho\in{\cal P}\left(I\right)\), we define the join of \(\pi\) and \(\rho\), \(\pi\vee\rho\), as the smallest partition larger than both \(\pi\) and \(\rho\), and the meet \(\pi\wedge\rho\) as the largest partition smaller than both \(\pi\) and \(\rho\).  We denote the smallest element of \({\cal P}\left(I\right)\) (in which all elements are singlets) by \(0_{I}\), and the smallest element of \(\left[n\right]\) by \(0_{n}\).  We denote the largest element (which consists of a single block \(I\)) by \(1_{I}\), and the largest element of \(\left[n\right]\) by \(1_{n}\).

If blocks \(V_{1},V_{2}\in\pi\) are subsets of the same block of \(\pi\vee\rho\), we say that \(\rho\) connects these blocks of \(\pi\).

If \(f\) is a function with domain \(I\), we define \(\ker\left(f\right)\in{\cal P}\left(I\right)\) as the partition whose blocks are the preimages of points in the range of \(f\).

A useful lemma:
\begin{lemma}
\label{lemma: useful}
If \(\pi\preceq\rho\), \(\pi,\rho\in{\cal P}\left(I\right)\) for some set \(I\), then
\[\#\left(\pi\right)-\#\left(\pi\vee\sigma\right)\geq\#\left(\rho\right)-\#\left(\rho\vee\sigma\right).\]
\end{lemma}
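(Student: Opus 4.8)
The plan is to reduce to the case where $\rho$ is obtained from $\pi$ by merging a single pair of blocks, and then telescope. Since $\pi\preceq\rho$, choose a chain $\pi=\tau_{0}\preceq\tau_{1}\preceq\cdots\preceq\tau_{m}=\rho$ in which each $\tau_{j+1}$ arises from $\tau_{j}$ by replacing two of its blocks with their union. Writing $g\left(\tau\right):=\#\left(\tau\right)-\#\left(\tau\vee\sigma\right)$, the claim is $g\left(\pi\right)\geq g\left(\rho\right)$, and it suffices to prove $g\left(\tau_{j}\right)\geq g\left(\tau_{j+1}\right)$ for each $j$, since these inequalities chain. So I may assume $\#\left(\rho\right)=\#\left(\pi\right)-1$, with $\rho$ obtained from $\pi$ by merging two blocks $V$ and $W$.

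Next I would track how $\tau\vee\sigma$ changes. Recall that $\pi\vee\sigma$ is the transitive closure of the relation ``$x$ and $y$ share a block of $\pi$ or of $\sigma$''; passing from $\pi$ to $\rho$ adds only the single identification $V\sim W$, so $\rho\vee\sigma$ is obtained from $\pi\vee\sigma$ by merging (at most) the block of $\pi\vee\sigma$ containing $V$ with the one containing $W$. Hence $\#\left(\rho\vee\sigma\right)\in\left\{\#\left(\pi\vee\sigma\right)-1,\#\left(\pi\vee\sigma\right)\right\}$, the smaller value occurring precisely when $V$ and $W$ lie in distinct blocks of $\pi\vee\sigma$. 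The two cases are then immediate: if $V$ and $W$ lie in distinct blocks of $\pi\vee\sigma$, then $g\left(\rho\right)=\left(\#\left(\pi\right)-1\right)-\left(\#\left(\pi\vee\sigma\right)-1\right)=g\left(\pi\right)$; if they lie in the same block, then $g\left(\rho\right)=\left(\#\left(\pi\right)-1\right)-\#\left(\pi\vee\sigma\right)=g\left(\pi\right)-1$. Either way $g\left(\pi\right)\geq g\left(\rho\right)$, which completes the induction.

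The only point requiring care --- the ``main obstacle,'' such as it is --- is the assertion in the second step that passing from $\pi\vee\sigma$ to $\rho\vee\sigma$ merges at most two blocks, which is just the fact that adjoining one new relation to a generating set of an equivalence relation can collapse at most one pair of existing classes. An alternative that bypasses the induction entirely is to invoke submodularity of the corank $\tau\mapsto\left|I\right|-\#\left(\tau\right)$ on ${\cal P}\left(I\right)$ (the rank function of the graphic matroid of the complete graph): applying it to $\rho$ and $\pi\vee\sigma$, and using $\rho\vee\left(\pi\vee\sigma\right)=\rho\vee\sigma$ together with $\rho\wedge\left(\pi\vee\sigma\right)\succeq\pi$, yields $\#\left(\rho\right)-\#\left(\rho\vee\sigma\right)\leq\#\left(\pi\right)-\#\left(\pi\vee\sigma\right)$ in one line.
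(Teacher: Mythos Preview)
Your proof is correct. The chain-and-telescope argument is clean and the one-step case analysis is exactly right; the submodularity alternative you sketch at the end is also valid.

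The paper, however, takes a different route. It observes that every partition in sight (namely \(\pi\), \(\rho\), \(\pi\vee\sigma\), \(\rho\vee\sigma\)) is coarser than \(\pi\), and hence can be regarded as a partition of the set of \emph{blocks} of \(\pi\); block counts are unchanged under this identification. In that quotient lattice, the role of \(\sigma\) is played by \(\pi\vee\sigma\), and the general counting bound ``joining with a partition having \(m\) blocks on a ground set of size \(n\) reduces the number of blocks by at most \(n-m\)'' gives \(\#(\rho)-\#(\rho\vee\sigma)\leq\#(\pi)-\#(\pi\vee\sigma)\) in one stroke. Your induction is more elementary and completely self-contained; the paper's argument is shorter once one sets up the quotient viewpoint, and is closer in spirit to your submodularity remark (both are lattice-theoretic rather than step-by-step). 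Either approach is fine here.
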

\begin{proof}
Since each partition considered is coarser than \(\pi\), we can consider each partition to be an element of the partitions on the blocks of \(\pi\), \({\cal P}\left(\pi\right)\), where a block corresponds to the set of blocks of \(\pi\) contained in it.  The number of blocks in a partition is thus the same in both posets.

A block \(V\) of \(\sigma\) may join at most \(\left|V\right|\) blocks of \(\rho\), so taking the join of \(\rho\) with \(\sigma\) reduces the number of blocks by at most \(\sum_{V\in\sigma}\left(\left|V\right|-1\right)=\left|I\right|-\#\left(\sigma\right)\).  Considered as elements of \({\cal P}\left(\pi\right)\), the number of elements in the underlying set is instead \(\#\left(\pi\right)\), so in fact this join reduces the number of blocks by at most \(\#\left(\pi\right)-\#\left(\pi\vee\sigma\right)\), as desired.
\end{proof}

A partition whose blocks all contain exactly two elements is called a {\em pairing}.  We denote the set of pairings on \(I\) by \({\cal P}_{2}\left(I\right)\) and the set of pairings on \(\left[n\right]\) by \({\cal P}_{2}\left(n\right)\).  We note that this set is empty for \(n\) odd, so any sum over this set is zero.
\end{definition}

\begin{definition}
We define the M\"{o}bius function \(\mu:{\cal P}\left(n\right)^{2}\rightarrow\mathbb{C}\) as the unique function such that \(\mu\left(\pi,\rho\right)=0\) unless \(\pi\preceq\rho\), and for any \(\pi,\rho\in{\cal P}\left(n\right)\),
\[\sum_{\pi\preceq\sigma\preceq\rho}\mu\left(\sigma,\rho\right)=\left\{\begin{array}{ll}1,&\pi=\rho\\0,&\textrm{otherwise}\end{array}\right.\]
\end{definition}

The value of \(\mu\left(\pi,\rho\right)\) depends only on the number of blocks of \(\pi\) contained in each block of \(\rho\).  See, e.g., \cite{MR1311922}, Chapter~12, for more detail on M\"{o}bius functions, and the exercises to Chapter~10 of \cite{MR2266879} for the M\"{o}bius function of partitions in particular.

\begin{definition}
We define the \(n\)th mixed moment \(a_{n}\) of random variables \(X_{1},\ldots,X_{n}\) by
\[a_{n}\left(X_{1},\ldots,X_{n}\right)=\mathbb{E}\left(X_{1}\cdots X_{n}\right)\textrm{.}\]
For \(\pi\in{\cal P}\left(n\right)\), we define the moment \(a_{\pi}\) by
\[a_{\pi}\left(X_{1},\ldots,X_{n}\right)=\prod_{V=\left\{i_{1},\ldots,i_{m}\right\}\in\pi}a_{m}\left(X_{i_{1}},\ldots,X_{i_{m}}\right)\textrm{.}\]

We define the cumulants \(k_{1},k_{2},\ldots\) such that \(k_{n}\) is an \(n\)-linear function, and such that for \(\pi\in{\cal P}\left(n\right)\),
\[k_{\pi}\left(X_{1},\ldots,X_{n}\right):=\prod_{V=\left\{i_{1},\ldots,i_{m}\right\}}k_{m}\left(X_{i_{1}},\ldots,X_{i_{m}}\right)\textrm{,}\]
to be the unique functions satisfying, for all \(n\),
\[a_{n}\left(X_{1},\ldots,X_{n}\right)=\sum_{\pi\in{\cal P}\left(n\right)}k_{\pi}\left(X_{1},\ldots,X_{n}\right)\textrm{.}\]

Equivalently, the cumulants are
\[k_{\pi}\left(X_{1},\ldots,X_{n}\right)=\sum_{\rho\preceq\pi}\mu\left(\rho,\pi\right)a_{\rho}\left(X_{1},\ldots,X_{n}\right)\textrm{.}\]
\end{definition}

We note that the first cumulant \(k_{1}\left(X\right)=\mathbb{E}\left(X\right)\) is the expectation and the second cumulant \(k_{2}\left(X,Y\right)=\mathbb{E}\left(XY\right)-\mathbb{E}\left(X\right)\mathbb{E}\left(Y\right)\) is the covariance.

It is a standard result that a cumulant containing any independent random variables vanishes.

In order to distinguish the partition of a set from the partition of an integer (an unordered list of integers summing to that integer), we will always refer to the partition of an integer as a Young diagram.

\begin{definition}
A {\em Young diagram} on integer \(n\geq 0\) is an unordered list of integers summing to \(n\).  If \(\lambda\) is a Young diagram on \(n\), we write \(\lambda\vdash n\).
\end{definition}

We will refer to the integers in the list \(\lambda\) as the lengths of its rows.  A Young diagram with rows of length \(\lambda_{1}\geq\cdots\geq \lambda_{r}\) may be written \(\left(\lambda_{1},\ldots,\lambda_{r}\right)\).

\subsection{Permutations}

We denote the set of permutations on a set \(I\) by \(S\left(I\right)\), and the set of permutations on \(\left[n\right]\) by \(S_{n}\).  We will generally write permutations \(\pi\) in cycle notation.

The elements in a cycle form an orbit of that partition.  We will use the term orbit to refer to this subset of the domain, and cycle to refer to the action of the permutation on this subset.

Since the orbits of a permutation form a partition of its domain, we will often use the permutation to represent this partition.  A pairing \(\pi\in{\cal P}_{2}\left(I\right)\) corresponds to a permutation in \(S\left(I\right)\), in which each element is mapped to the other element in its block.

If \(\left|\pi\right|\) is the smallest number of factors in an expression of \(\pi\) as a product of transpositions, then
\[\#\left(\pi\right)=\left|I\right|-\left|\pi\right|\textrm{.}\]
This number depends on the implied domain of \(\pi\), which we will state explicitly if it is not clear from context.

We note that if we conjugate \(\pi\in S\left(I\right)\) by \(\rho\in S\left(i\right)\), the cycle notation of \(\rho\pi\rho^{-1}\) is that of \(\pi\), but with each \(k\in I\) replaced by \(\rho\left(k\right)\).

\begin{definition}
If \(\pi\in S\left(I\right)\), then we define the permutation induced by \(\pi\) on \(J\subseteq I\), denoted \(\left.\pi\right|_{J}\), by letting \(\left.\pi\right|_{J}\left(k\right)=\pi^{m}\left(k\right)\) where \(m>0\) is the smallest integer such that \(\pi^{m}\left(k\right)\in J\).
\end{definition}
In cycle notation, this amounts to deleting the elements not in \(J\).  If \(\pi\) does not connect \(J\) and \(I\setminus J\), then \(\left.\pi\right|_{J}\) is just the restriction of \(\pi\) to \(J\), which is a permutation on \(J\).

\subsection{Premaps}

Pairs of permutations may be used to represent orientable maps (or surfaced graphs, or surface gluings), see \cite{MR0404045, MR2036721}.  In order to represent unoriented maps (as appear in the real matrix calculations, see \cite{MR2036721}, Chapter~3), we use what we will call premaps.  (These are not exactly what are referred to as premaps in the existing literature, but are a component of them.  See \cite{MR1813436}.)  For proofs of the cited results on premaps, see \cite{2011arXiv1101.0422R}, and for diagrams motivating the constructions, see \cite{2012arXiv1204.6211R}.

\begin{definition}
A {\em premap} is a permutation \(\pi\in S\left(\pm I\right)\) such that \(\delta\pi\delta=\pi^{-1}\), and such that there is no cycle containing both \(k\) and \(-k\) for \(k\in I\).  We denote the set of premaps on \(\pm I\) by \(PM\left(\pm I\right)\).
\end{definition}
The second condition may be verified by checking that there is no \(k\in I\) such that \(\pi\left(k\right)=-k\) (see \cite{2011arXiv1101.0422R}).

For any \(\pi\in PM\left(\pm\left[n\right]\right)\), \(\pi^{-1}\in PM\left(\pm\left[n\right]\right)\), and for any \(\varepsilon:\left[n\right]\rightarrow\left\{1,-1\right\}\) \(\delta_{\varepsilon}\pi\delta_{\varepsilon}\) is a premap with the same number of cycles, since conjugation by \(\delta_{\varepsilon}\) swaps some \(k\) and \(-k\) in \(\pi\).

\begin{definition}
We call a cycle in \(\pi\in PM\left(\pm I\right)\) {\em particular} if its \(k\) with smallest \(\left|k\right|\) is positive.  We note that for every particular cycle in \(\pi\), we have a cycle that is not particular containing the negatives of the elements (in reverse order).  We will denote by \(\pi/2\) the set of elements of \(\pm I\) contained in particular cycles, as well as the permutation \(\pi\) restricted to this set (so \(\pi/2\) uniquely determines \(\pi\)).
\end{definition}

\begin{definition}
For \(I\) with \(I\cap\left(-I\right)=\emptyset\) and \(\varphi\in S\left(I\right)\), let \(\varphi_{+}:=\varphi\) and \(\varphi_{-}:=\delta\varphi\delta\).  For \(\alpha\in PM\left(\pm I\right)\), define
\[K\left(\varphi,\alpha\right)=\varphi_{+}^{-1}\alpha^{-1}\varphi_{-}\textrm{.}\]
\end{definition}
If \(\varphi\) encodes the face information of a map and \(\alpha\) the edge or hyperedge information, \(K\left(\varphi,\alpha\right)\) may be thought of as encoding the vertex information, but it is its inverse which appears more often in our calculations; see \cite{2012arXiv1204.6211R}.)  As shown in \cite{2011arXiv1101.0422R}, \(K\left(\varphi,\alpha\right),K\left(\varphi,\alpha\right)^{-1}\in PM\left(\pm I\right)\).

\begin{definition}
For \(I\) with \(I\cap\left(-I\right)=\emptyset\), a \(\varphi\in S\left(I\right)\), and a \(\alpha\in PM\left(\pm I\right)\), we define the Euler characteristic by
\[\chi\left(\varphi,\alpha\right):=\#\left(\varphi_{+}\varphi_{-}^{-1}\right)/2+\#\left(\alpha\right)/2+\#K\left(\varphi,\alpha\right)/2-\left|I\right|\textrm{.}\]
\end{definition}

We use several properties of the Euler characteristic (see \cite{2011arXiv1101.0422R} for proofs):
\begin{lemma}
\label{lemma: Euler characteristic}
If \(\varphi\in S\left(I\right)\) has orbits \(V_{1},\ldots,V_{r}\), and if \(\alpha\in PM\left(\pm I\right)\) connects the blocks \(\pm V_{1},\ldots,\pm V_{r}\) (that is, \(\left\{\pm V_{1},\ldots,\pm V_{r}\right\}\vee\alpha=1_{\pm I}\)), then \(\chi\left(\varphi,\alpha\right)\leq 2\).

We note that \(K\left(\varphi,\alpha\right)\preceq\left\{\pm V_{1},\ldots,\pm V_{r}\right\}\vee\alpha\).  Let \(\pm I_{1}\cap\pm I_{2}=\emptyset\) with \(I_{i}\cap\left(-I_{i}\right)=\emptyset\), \(\varphi_{i}\in S\left(I_{i}\right)\), \(\alpha_{i}\in PM\left(\pm I_{i}\right)\), \(i=1,2\), and let \(\varphi=\phi_{1}\phi_{2}\in S\left(I_{1}\cup I_{2}\right)\) and \(\alpha=\alpha_{1}\alpha_{2}\in PM\left(\pm\left(I_{1}\cup I_{2}\right)\right)\).  Then \(\chi\left(\varphi,\alpha\right)=\chi\left(\phi_{1},\alpha_{1}\right)+\chi\left(\phi_{2},\alpha_{2}\right)\).
\end{lemma}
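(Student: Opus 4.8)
The plan is to reduce the inequality $\chi(\varphi,\alpha)\le 2$ to the classical genus inequality for three permutations whose product is the identity, and then to read off the two ``note'' assertions from the multiplicativity of cycle counts. First I would put $\chi$ into a symmetric form. Set $\beta:=\varphi_{+}\varphi_{-}^{-1}$; a short computation shows $\beta$ acts as $\varphi$ on $I$ and as $\delta\varphi^{-1}\delta$ on $-I$, so its orbits are exactly $V_{1},\ldots,V_{r},-V_{1},\ldots,-V_{r}$ and $\#(\beta)=2\#(\varphi)$. From $\varphi_{+}=\beta\varphi_{-}$ one gets $K(\varphi,\alpha)=\varphi_{-}^{-1}(\alpha\beta)^{-1}\varphi_{-}$, so $\#K(\varphi,\alpha)=\#(\alpha\beta)$, and hence
\[\chi(\varphi,\alpha)=\tfrac12\bigl(\#(\beta)+\#(\alpha)+\#(\alpha\beta)\bigr)-\left|I\right|\textrm{.}\]
Taking $g_{1}:=(\alpha\beta)^{-1}$, $g_{2}:=\alpha$, $g_{3}:=\beta$, we have $g_{1}g_{2}g_{3}=\mathrm{id}$ on $\pm I$ with $\langle g_{1},g_{2},g_{3}\rangle=\langle\alpha,\beta\rangle$. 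The genus inequality --- for any such triple on a finite set $X$ with $c$ orbits, $\#(g_{1})+\#(g_{2})+\#(g_{3})\le\left|X\right|+2c$, provable by induction on $\left|g_{1}\right|$ --- then gives $\chi(\varphi,\alpha)\le c$, so it suffices to show that $\langle\alpha,\beta\rangle$ has at most two orbits on $\pm I$.

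The orbit partition of $\langle\alpha,\beta\rangle$ is $P:=\{V_{1},-V_{1},\ldots,V_{r},-V_{r}\}\vee\alpha$, and the point is that the hypothesis forces $\#(P)\le 2$. Since $\delta\alpha\delta=\alpha^{-1}$ and $\delta$ exchanges $V_{i}$ with $-V_{i}$, conjugation by $\delta$ preserves the orbit partitions of both $\alpha$ and $\beta$; hence $\delta$ maps $P$ to itself and induces an involution on its blocks. Because $\{V_{i},-V_{i}\}_{i}\preceq\{\pm V_{1},\ldots,\pm V_{r}\}$, the connectedness condition reads $\{\pm V_{1},\ldots,\pm V_{r}\}\vee P=1_{\pm I}$; and since every block of $P$ contains some $V_{i}$ or $-V_{i}$, adjoining to $P$ the relations ``$V_{i}$ and $-V_{i}$ share a block'' is the same as declaring $B\sim\delta(B)$ for every block $B$ of $P$. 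The classes of this relation are the $\delta$-orbits on the blocks of $P$, which have size at most two as $\delta^{2}=\mathrm{id}$; for them to collapse to a single class there can be at most two blocks. Thus $c=\#(P)\le 2$, and $\chi(\varphi,\alpha)\le 2$.

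For the remaining assertions: $K(\varphi,\alpha)\preceq\{\pm V_{1},\ldots,\pm V_{r}\}\vee\alpha$ holds because the orbit partition of a product of permutations is finer than the join of their individual orbit partitions (following an element through the successive maps never leaves the joined block), and the orbit partitions of $\varphi_{+}^{-1}$ and of $\varphi_{-}$ both refine $\{\pm V_{1},\ldots,\pm V_{r}\}$ while $\alpha^{-1}$ has the orbits of $\alpha$. The additivity is immediate once one observes that, with respect to $\pm I=\pm I_{1}\sqcup\pm I_{2}$, each of $\varphi_{+}\varphi_{-}^{-1}$, $\alpha$, and $K(\varphi,\alpha)=\varphi_{+}^{-1}\alpha^{-1}\varphi_{-}$ factors as the product of the analogous permutation on $\pm I_{1}$ with the one on $\pm I_{2}$, so all three cycle counts split as sums, as does $\left|I\right|=\left|I_{1}\right|+\left|I_{2}\right|$. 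The one genuinely delicate step is the orbit count: the connectedness condition by itself only bounds $c$ by $r+1$, and it is precisely the $\delta$-symmetry built into the definition of a premap that cuts this down to $c\le 2$; the reduction to the genus inequality and the product-compatibility statements are routine bookkeeping by comparison, and if one prefers not to invoke the genus inequality the same argument can be recast as an induction on $\left|I\right|$ with the trivial base case $\left|I\right|=1$, where $\alpha=\mathrm{id}$ and $\chi=2$.
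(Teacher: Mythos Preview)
The paper does not actually prove this lemma here; it states the three assertions and refers to \cite{2011arXiv1101.0422R} for proofs. So there is no in-paper argument to compare against.

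Your argument is correct. The rewriting $\chi(\varphi,\alpha)=\tfrac12\bigl(\#(\beta)+\#(\alpha)+\#(\alpha\beta)\bigr)-|I|$ via the conjugacy $K(\varphi,\alpha)=\varphi_{-}^{-1}(\alpha\beta)^{-1}\varphi_{-}$ is clean and reduces the question to the classical genus bound $\#(g_{1})+\#(g_{2})+\#(g_{3})\le |X|+2c$ for a triple with $g_{1}g_{2}g_{3}=\mathrm{id}$. The crux, as you correctly identify, is the orbit count: the connectedness hypothesis alone would only give $c\le r+1$, but the premap condition $\delta\alpha\delta=\alpha^{-1}$ makes $\delta$ permute the blocks of $P=\{V_{i},-V_{i}\}_{i}\vee\alpha$, and since passing from $P$ to $\{\pm V_{i}\}_{i}\vee\alpha=1_{\pm I}$ amounts exactly to collapsing each $\delta$-orbit of blocks, there is a single $\delta$-orbit and hence $c\le 2$. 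The remaining two assertions (compatibility of $K$ with the join, and additivity over disjoint supports) are indeed routine, and your justifications for them are fine. This is essentially the same mechanism that underlies the proof in the cited reference, so your write-up would serve well as a self-contained replacement for the citation.
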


\subsection{Noncrossing Conditions}

Highest order terms are often represented by highest Euler characteristic terms, which are typically those which can be represented as noncrossing diagrams.  Noncrossing conditions may be defined in terms of permutations.  We cite the definitions and characterisations for \(\varphi\) with one and two cycles, for oriented and unoriented surfaces.  (Our conventions are slightly different from those in \cite{MR1475837, MR2052516}, where \(\alpha\) follows the sense of \(\varphi\).  Here we think of each object as being oriented counter-clockwise relative to itself.  See \cite{2012arXiv1204.6211R} for more exposition.)

\begin{definition}
Let \(I\) be a finite set, let \(\varphi\in S\left(I\right)\) be a permutation with a single cycle, and let \(\alpha\in S\left(I\right)\).

We call \(\alpha\) {\em disc nonstandard (relative to \(\varphi\))} if there are three distinct elements \(a,b,c\in I\) such that \(\left.\varphi\right|_{\left\{a,b,c\right\}}=\left(a,b,c\right)\) and \(\left.\alpha\right|_{\left\{a,b,c\right\}}=\left(a,b,c\right)\).  We call \(\alpha\) {\em disc standard (relative to \(\varphi\)} if there are no such elements.

We call \(\alpha\) {\em disc crossing (relative to \(\varphi\))} if there are four distinct elements \(a,b,c,d\in I\) such that \(\left.\varphi\right|_{\left\{a,b,c,d\right\}}=\left(a,b,c,d\right)\) but \(\left.\alpha\right|_{\left\{a,b,c,d\right\}}=\left(a,c\right)\left(b,d\right)\).  We call \(\alpha\) {\em disc noncrossing (relative to \(\varphi\))} if it is neither disc nonstandard nor disc noncrossing.

We denote the set of disc-noncrossing permutations on \(I\) relative to \(\varphi\) by \(S_{\mathrm{disc-nc}}\left(\varphi\right)\).
\end{definition}

The following theorem from \cite{MR1475837} shows that the noncrossing conditions are equivalent to a condition on the number of cycles.
\begin{theorem}[Biane]
Let \(\varphi,\alpha\in S\left(I\right)\) for some finite set \(I\), and let \(\varphi\) have a single cycle.  Then
\[\#\left(\alpha\right)+\#\left(\varphi^{-1}\alpha^{-1}\right)=\left|I\right|+1\]
if and only if \(\alpha\in S_{\mathrm{disc-nc}}\left(\varphi\right)\).
\end{theorem}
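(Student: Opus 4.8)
Every quantity in the statement is unchanged when $\varphi$ and $\alpha$ are simultaneously conjugated by a permutation of $I$, and any single cycle on an $n$-element set is conjugate to $\left(1,2,\ldots,n\right)$, so I would first take $I=\left[n\right]$ and $\varphi=\left(1,\ldots,n\right)$. Since $\varphi^{-1}\alpha^{-1}=\left(\alpha\varphi\right)^{-1}$ has the same number of cycles as $\alpha\varphi$, the claimed identity reads $\#\left(\alpha\right)+\#\left(\alpha\varphi\right)=n+1$. Using $\left|\sigma\right|=n-\#\left(\sigma\right)$ together with the factorisation $\varphi=\alpha^{-1}\left(\alpha\varphi\right)$, subadditivity of $\left|\cdot\right|$ gives $\left|\varphi\right|\leq\left|\alpha\right|+\left|\alpha\varphi\right|$, that is,
\[\#\left(\alpha\right)+\#\left(\alpha\varphi\right)\leq n+1\]
for \emph{every} $\alpha\in S_{n}$. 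So the equation in the statement says exactly that this universal bound is attained, equivalently that $\alpha^{-1}$ lies on a geodesic from the identity to $\varphi$ in the Cayley graph of $S_{n}$ generated by the transpositions; in the orientation convention used above, disc-noncrossing means that each cycle of $\alpha$ runs opposite to $\varphi$, so this is the mirror image of the usual Kreweras--Biane description of that geodesic interval. Writing $E_{n}$ for the set of $\alpha$ attaining the bound, the goal is $E_{n}=S_{\mathrm{disc-nc}}\left(\varphi\right)$.

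I would prove this by induction on $n$ using one reduction step, whose combinatorial core is the claim that if $\alpha\neq e$ belongs to either set then there is an $i\in\left[n\right]$ with $\alpha\left(i\right)=i$ or $\alpha\left(i\right)=\varphi^{-1}\left(i\right)$. For $\alpha\in E_{n}$ this follows by counting: if no such $i$ existed, then both $\alpha$ and $\alpha\varphi$ would be fixed-point free (if $\left(\alpha\varphi\right)\left(k\right)=k$ then $i:=\varphi\left(k\right)$ satisfies $\alpha\left(i\right)=\varphi^{-1}\left(i\right)$), so $\#\left(\alpha\right)$ and $\#\left(\alpha\varphi\right)$ would each be at most $n/2$ and hence $\#\left(\alpha\right)+\#\left(\alpha\varphi\right)\leq n$; but $\#\left(\alpha\right)+\#\left(\alpha\varphi\right)\equiv n+1\pmod{2}$ because $\mathrm{sgn}\left(\alpha\right)\mathrm{sgn}\left(\alpha\varphi\right)=\mathrm{sgn}\left(\varphi\right)$, so the sum would in fact be at most $n-1$, contradicting $\alpha\in E_{n}$. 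For $\alpha\in S_{\mathrm{disc-nc}}\left(\varphi\right)$, I would put $1,\ldots,n$ on a circle: disc-noncrossing forces the orbits of $\alpha$ of size at least $2$ to be pairwise nested or disjoint as chords, so one may choose an innermost such orbit $B$, and then no point of $\left[n\right]$ can lie strictly between two circularly consecutive points of $B$ (such a point would force a crossing or a smaller nested orbit). Hence $B$ is a cyclic interval $\left\{a,a+1,\ldots,a+k-1\right\}$, and a $k$-cycle on a $k$-point cyclic interval is disc-noncrossing only when it equals $\left(\left.\varphi\right|_{B}\right)^{-1}$ (any other $k$-cycle on it exhibits three points in the cyclic order of $\varphi$); so $\alpha\left(a+1\right)=a=\varphi^{-1}\left(a+1\right)$.

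Given such an $i$, put $\varphi'=\left.\varphi\right|_{\left[n\right]\setminus\left\{i\right\}}$ (again a single cycle) and $\alpha'=\left.\alpha\right|_{\left[n\right]\setminus\left\{i\right\}}$. Tracking how deleting $i$ alters the cycles of $\alpha$ and of $\alpha\varphi$ gives $\#\left(\alpha\right)+\#\left(\alpha\varphi\right)=\#\left(\alpha'\right)+\#\left(\alpha'\varphi'\right)+1$, so $\alpha\in E_{n}$ iff $\alpha'\in E_{n-1}$; and restricting the forbidden triples and quadruples to $\left[n\right]\setminus\left\{i\right\}$ (using that the induced-permutation operation is transitive, and that removing $i$ preserves the circular order of the remaining points) shows that $\alpha\in S_{\mathrm{disc-nc}}\left(\varphi\right)$ implies $\alpha'\in S_{\mathrm{disc-nc}}\left(\varphi'\right)$. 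With the inductive hypothesis $E_{n-1}=S_{\mathrm{disc-nc}}\left(\varphi'\right)$ this already yields $S_{\mathrm{disc-nc}}\left(\varphi\right)\subseteq E_{n}$. The reverse inclusion needs that reinserting $i$ carries a disc-noncrossing $\alpha'$ back to a disc-noncrossing $\alpha$: when $i$ is a fixed point this is clear, since a fixed point cannot sit inside a forbidden triple or crossing quadruple; when $i$ is inserted immediately after $j=\varphi^{-1}\left(i\right)$ it requires a short case analysis, and this is the step I expect to be the main obstacle. One can avoid it by a cardinality argument instead: $\left|S_{\mathrm{disc-nc}}\left(\varphi\right)\right|$ is the Catalan number $C_{n}$ (a noncrossing partition carries a unique orientation-compatible cycle structure) and $\left|E_{n}\right|=C_{n}$ by the standard recursion on the orbit through a marked point, so the inclusion already proved is forced to be an equality. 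Conceptually, $\#\left(\alpha\right)+\#\left(\alpha\varphi\right)=n+1$ for disc-noncrossing $\alpha$ is just Euler's relation $V-E+F=2$ for the genus-zero map obtained by drawing the cycles of $\alpha$ as non-crossing polygons inside the disc bounded by $\varphi$.
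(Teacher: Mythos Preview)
The paper does not prove this theorem at all: it is quoted as a known result of Biane (the reference \cite{MR1475837}), with only the remark that the orientation convention here is opposite to Biane's. There is therefore no proof in the paper to compare your argument against.

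That said, your sketch is a correct outline of one of the standard arguments. The triangle inequality for the transposition metric gives the universal bound \(\#\left(\alpha\right)+\#\left(\alpha\varphi\right)\leq n+1\); the parity observation then forces a fixed point of \(\alpha\) or of \(\alpha\varphi\) whenever the bound is attained; and on the noncrossing side, an innermost block of a noncrossing partition with no singletons is indeed a cyclic interval, whose standard (reverse-\(\varphi\)) orientation produces an \(i\) with \(\alpha\left(i\right)=\varphi^{-1}\left(i\right)\). The induction step then goes through, and the Catalan-number cardinality argument you propose is a clean way to close the direction you correctly flag as the delicate one.

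One small wording issue: in the innermost-block paragraph, the assertion that ``no point of \(\left[n\right]\) can lie strictly between two circularly consecutive points of \(B\)'' is only valid once you have already disposed of the case that \(\alpha\) has a fixed point---a singleton orbit can sit between two consecutive elements of \(B\) without producing a crossing or a smaller nested orbit of size \(\geq 2\). Since a fixed point already gives \(\alpha\left(i\right)=i\) and finishes the claim, this is harmless, but the case split should be made explicit.
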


Similar conditions for \(\varphi\) with two cycles can be found in \cite{MR2052516}:
\begin{definition}
Let \(\varphi\in S\left(I\right)\) be a permutation with two cycles (which we will refer to as \(\varphi_{\mathrm{ext}}\) and \(\varphi_{\mathrm{int}}\)), and let \(\alpha\in S\left(I\right)\).

We say that \(\alpha\) is {\em annular nonstandard (relative to \(\varphi\))} if any of the following conditions holds:
\begin{enumerate}
	\item there are \(a,b,c\in I\) such that \(\left.\varphi\right|_{\left\{a,b,c\right\}}=\left(a,b,c\right)\) and \(\left.\alpha\right|_{\left\{a,b,c\right\}}=\left(a,b,c\right)\),
	\item there are \(a,b,c,d\in I\) such that \(\left.\varphi\right|_{\left\{a,b,c,d\right\}}=\left(a,b\right)\left(c,d\right)\) but \(\left.\alpha\right|_{\left\{a,b,c,d\right\}}=\left(a,c,b,d\right)\).
\end{enumerate}

We call \(\alpha\) {\em annular standard (relative to \(\varphi\))} if none of these conditions hold.

Let \(x\in\varphi_{\mathrm{ext}}\) and \(y\in\varphi_{\mathrm{int}}\).  We define a permutation \(\lambda_{x,y}\) on \(I\setminus\left\{x,y\right\}\) by letting \(\lambda_{x,y}\left(\varphi^{-1}\left(x\right)\right)=\varphi\left(y\right)\) and \(\lambda_{x,y}\left(\varphi^{-1}\left(y\right)\right)=\varphi\left(x\right)\) (we will generally assume that \(\varphi_{\mathrm{ext}}\) and \(\varphi_{\mathrm{int}}\) have at least two elements), and letting \(\lambda_{x,y}\left(a\right)=\varphi\left(a\right)\) otherwise.  We say that \(\alpha\) is {\em annular crossing (relative to \(\varphi\))} if any of the three following conditions holds:
\begin{enumerate}
	\item there are elements \(a,b,c,d\in I\) such that \(\left.\varphi\right|_{\left\{a,b,c,d\right\}}=\left(a,b,c,d\right)\) but \(\left.\alpha\right|_{\left\{a,b,c,d\right\}}=\left(a,c\right)\left(b,c\right)\),
	\item there are elements \(a,b,c,x,y\in I\), \(x\in\varphi_{\mathrm{ext}}\) and \(y\in\varphi_{\mathrm{int}}\), such that \(\left.\lambda_{x,y}\right|_{\left\{a,b,c\right\}}=\left(a,b,c\right)\) and \(\left.\alpha\right|_{\left\{a,b,c,x,y\right\}}=\left(a,b,c\right)\left(x,y\right)\),
	\item there are elements \(a,b,c,d,x,y\in I\), \(x\in\varphi_{\mathrm{ext}}\) and \(y\in\varphi_{\mathrm{int}}\), such that \(\left.\lambda_{x,y}\right|_{\left\{a,b,c,d\right\}}=\left(a,b,c,d\right)\) but \(\left.\alpha\right|_{\left\{a,b,c,d,x,y\right\}}=\left(a,c\right)\left(b,d\right)\left(x,y\right)\).
\end{enumerate}
We call \(\alpha\) {\em annular noncrossing (relative to \(\varphi\))} if it is neither annular nonstandard or annular noncrossing.
	We denote the set of connected (i.e. those connecting the cycles of \(\varphi\)) annular-noncrossing permutations relative to \(\varphi\) by \(S_{\mathrm{ann-nc}}\left(\varphi\right)\).
\end{definition}

\begin{theorem}[Mingo, Nica]
Let \(\varphi,\alpha\in S\left(I\right)\) for some finite set \(I\), where \(\varphi\) has two cycles and \(\alpha\) connects the cycles of \(\varphi\).  Then
\[\#\left(\alpha\right)+\#\left(\varphi^{-1}\alpha^{-1}\right)=\left|I\right|\]
if and only if \(\alpha\in S_{\mathrm{ann-nc}}\left(\varphi\right)\).
\end{theorem}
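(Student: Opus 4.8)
The plan is to bound $\#\left(\alpha\right)+\#\left(\varphi^{-1}\alpha^{-1}\right)$ above by $\left|I\right|$, to recognize the extremal case as a planarity condition, and then to match that condition against the forbidden patterns defining $S_{\mathrm{ann-nc}}\left(\varphi\right)$.

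For the bound I would run the orientable counterpart of the argument behind Lemma~\ref{lemma: Euler characteristic} (cf.\ \cite{MR0404045, MR2036721}): from $\alpha$ and $\varphi$ one builds a bipartite map (hypermap) with edge set $I$ whose white vertices, black vertices and faces are counted by $\#\left(\alpha\right)$, $\#\left(\varphi^{-1}\alpha^{-1}\right)$ and $\#\left(\varphi\right)$, and the hypothesis that $\alpha$ connects the two cycles of $\varphi$ is exactly that this map is connected. Euler's formula on the orientable genus-$g$ surface onto which it is glued reads
\[\#\left(\alpha\right)+\#\left(\varphi^{-1}\alpha^{-1}\right)+\#\left(\varphi\right)-\left|I\right|=2-2g\textrm{,}\]
so with $\#\left(\varphi\right)=2$ we get $\#\left(\alpha\right)+\#\left(\varphi^{-1}\alpha^{-1}\right)=\left|I\right|-2g\le\left|I\right|$, with equality exactly when $g=0$. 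The theorem thus reduces to showing that the annular pair $\left(\varphi,\alpha\right)$ is planar if and only if $\alpha\in S_{\mathrm{ann-nc}}\left(\varphi\right)$.

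For this equivalence I would argue by induction on $\left|I\right|$. Removing a point $k$ and passing to $\left.\alpha\right|_{I\setminus\left\{k\right\}}$ and $\left.\varphi\right|_{I\setminus\left\{k\right\}}$ decreases $\left|I\right|$ by one, changes $\#\left(\alpha\right)$ and $\#\left(\varphi^{-1}\alpha^{-1}\right)$ in controlled ways, and, when $k$ is the last remaining point of one of the two cycles of $\varphi$, collapses the situation to the single-cycle one governed by Biane's theorem. The permutations $\lambda_{x,y}$ appearing in the definition of $S_{\mathrm{ann-nc}}\left(\varphi\right)$ encode, for $x\in\varphi_{\mathrm{ext}}$ and $y\in\varphi_{\mathrm{int}}$, the single cycle obtained by cutting the annulus between its two boundary components, so that the second annular-nonstandard pattern and the second and third annular-crossing patterns are exactly Biane's disc-nonstandard and disc-crossing conditions transported across such a cut, while the first pattern of each is the corresponding disc obstruction already visible inside a single cycle of $\varphi$. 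The inductive step is then to verify that both the maximality $\#\left(\alpha\right)+\#\left(\varphi^{-1}\alpha^{-1}\right)=\left|I\right|$ and the (non)occurrence of these patterns are inherited by, and lift back from, the restriction, in each of the cases according to whether $k$ lies in $\varphi_{\mathrm{ext}}$ or $\varphi_{\mathrm{int}}$, whether it is the last point of its cycle, and how it sits in the orbits of $\alpha$.

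The genuine obstacle is exactly this matching: confirming, uniformly over all those configurations, that the five forbidden patterns are precisely the obstructions to $\#\left(\alpha\right)+\#\left(\varphi^{-1}\alpha^{-1}\right)=\left|I\right|$. One direction can be isolated cleanly: for ``maximal $\Rightarrow$ annular noncrossing'' one uses the contrapositive, extracting from any forbidden pattern a transposition $\tau$ such that $\alpha\tau$ (or a suitable conjugate) still connects the two cycles of $\varphi$ while $\#\left(\alpha\tau\right)+\#\left(\varphi^{-1}\left(\alpha\tau\right)^{-1}\right)>\#\left(\alpha\right)+\#\left(\varphi^{-1}\alpha^{-1}\right)$; since the left-hand side is still $\le\left|I\right|$, the original sum is strictly below $\left|I\right|$. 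The bound of the second paragraph and the appeals to Biane's theorem are routine by comparison; the case analysis is where the work lies.
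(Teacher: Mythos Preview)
The paper does not prove this theorem; it is quoted as a known result of Mingo and Nica from \cite{MR2052516} and used without argument. So there is no ``paper's own proof'' to compare your proposal against.

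That said, your outline is broadly the right shape and is in the spirit of the original source. The Euler-characteristic bound in your second paragraph is exactly the standard hypermap argument and gives the upper bound with equality characterised by planarity. Your contrapositive idea for ``maximal $\Rightarrow$ annular noncrossing'' is also essentially how one shows that each forbidden pattern forces a genus drop. The part you flag as the ``genuine obstacle'' is indeed where the content is: in \cite{MR2052516} the reverse implication (annular noncrossing $\Rightarrow$ equality) is not handled by a bare induction on $\left|I\right|$ with point deletion, but by a structural analysis of annular-noncrossing permutations---one shows that any such $\alpha$ admits a ``bridge'' cycle meeting both circles, uses it to cut the annulus into discs governed by Biane's theorem, and then counts. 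Your induction-by-deletion sketch is plausible but underspecified: deleting a point can disconnect $\alpha$ from one of the two cycles of $\varphi$ (so the inductive hypothesis no longer applies directly), and the bookkeeping of how $\#\left(\alpha\right)$ and $\#\left(\varphi^{-1}\alpha^{-1}\right)$ change under deletion interacts with whether the deleted point sits on a through-cycle or not. If you want to complete this direction along your lines, you will need to single out a specific point to delete (for instance, one on a cycle of $\alpha$ lying entirely inside one circle, whose existence is guaranteed by Lemma~\ref{lemma: neighbours, annulus} when such cycles exist) and treat separately the base case where every cycle of $\alpha$ is a bridge.
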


The analogous results for premaps (representing diagrams where twists are possible) are shown in \cite{2011arXiv1101.0422R}:
\begin{theorem}
\label{theorem: unoriented noncrossing}
Let \(\varphi\in S\left(I\right)\) have a single cycle, and let \(\pi\in PM\left(\pm I\right)\).  Then \(\chi\left(\varphi,\pi\right)=2\) if and only if \(\pi\) does not connect \(I\) and \(-I\) and \(\left.\pi\right|_{I}\in S_{\mathrm{disc-nc}}\left(\varphi\right)\).

Let \(\varphi\in S\left(I\right)\) have two orbits \(V_{1}\) and \(V_{2}\), and let \(\alpha\in PM\left(\pm I\right)\) connect \(\pm V_{1}\) and \(\pm V_{2}\).  Then \(\chi\left(\varphi,\pi\right)=2\) if and only if, for some choice of sign \(\varepsilon=\pm 1\), \(\alpha\) does not connect \(V_{1}\cup\varepsilon V_{2}\) to \(\left(-V_{1}\right)\cup\left(-\varepsilon V_{2}\right)\) and \(\left.\alpha\right|_{V_{1}\cup\varepsilon V_{2}}\in S_{\mathrm{ann-nc}}\left(\left.\varphi_{+}\varphi_{-}^{-1}\right|_{V_{1}\cup\varepsilon V_{2}}\right)\).
\end{theorem}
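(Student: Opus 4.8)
The plan is to reduce each equivalence to its orientable counterpart --- Biane's theorem in the one-cycle case, the Mingo--Nica theorem in the two-cycle case --- by \emph{unfolding} a premap that does not connect the relevant halves of \(\pm I\), and to dispose of the remaining case, where \(\alpha\) does connect them, by a genus argument showing the glued surface is nonorientable. Throughout I will use that \(\chi\) is integer-valued (each of \(\#(\varphi_{+}\varphi_{-}^{-1})\), \(\#(\alpha)\), \(\#K(\varphi,\alpha)\) is even), and that Lemma~\ref{lemma: Euler characteristic} already gives \(\chi(\varphi,\alpha)\le 2\) in both situations.

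For the one-cycle statement, suppose first that \(\pi\in PM(\pm I)\) does not connect \(I\) and \(-I\). Then \(\delta\pi\delta=\pi^{-1}\) forces \(\pi\) to split as \(\beta\) on \(I\) and \(\delta\beta^{-1}\delta\) on \(-I\), where \(\beta:=\left.\pi\right|_{I}\in S(I)\); in particular \(\#(\pi)=2\#(\beta)\). Since \(\varphi\) has a single cycle, \(\varphi_{+}\varphi_{-}^{-1}\) has exactly two cycles, and because \(\varphi_{+}\) is supported on \(I\) and \(\varphi_{-}\) on \(-I\) one computes \(\left.K(\varphi,\pi)\right|_{I}=\varphi^{-1}\beta^{-1}\), whence \(\#K(\varphi,\pi)=2\#(\varphi^{-1}\beta^{-1})\). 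Substituting into the definition of \(\chi\),
\[\chi(\varphi,\pi)=1+\#(\beta)+\#(\varphi^{-1}\beta^{-1})-\left|I\right|\textrm{,}\]
and by Biane's theorem the right-hand side equals \(2\) exactly when \(\beta\in S_{\mathrm{disc-nc}}(\varphi)\); combined with \(\chi\le 2\) it is \(<2\) precisely when \(\beta\) is disc crossing. This proves the asserted equivalence whenever \(\pi\) does not connect \(I\) and \(-I\).

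The two-cycle statement is handled the same way. If, for some sign \(\varepsilon\), \(\alpha\) does not connect \(V_{1}\cup\varepsilon V_{2}\) to \((-V_{1})\cup(-\varepsilon V_{2})\), then \(\alpha\) preserves the fundamental domain \(F:=V_{1}\cup\varepsilon V_{2}\), so \(\beta:=\left.\alpha\right|_{F}\in S(F)\); moreover \(\left.\varphi_{+}\varphi_{-}^{-1}\right|_{F}\) is a permutation with two cycles (one on \(V_{1}\), one on \(\varepsilon V_{2}\)) which \(\beta\) connects. Using that \(\varphi_{+}^{-1}\) and \(\varphi_{-}\) have disjoint supports, hence commute and each preserve \(F\), a short conjugation computation gives \(\#K(\varphi,\alpha)=2\#(\psi^{-1}\beta^{-1})\) with \(\psi:=\left.\varphi_{+}\varphi_{-}^{-1}\right|_{F}\), while \(\#(\varphi_{+}\varphi_{-}^{-1})=2\) and \(\#(\alpha)=2\#(\beta)\); thus \(\chi(\varphi,\alpha)=2+\#(\beta)+\#(\psi^{-1}\beta^{-1})-\left|I\right|\). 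By the Mingo--Nica theorem (and \(\chi\le 2\)) this is \(2\) iff \(\beta\in S_{\mathrm{ann-nc}}(\psi)\) and \(<2\) otherwise. One must check that the two boundary cycles and the role of \(\lambda_{x,y}\) in the annular-noncrossing definition match the doubled \(\varphi\) on \(F\); this is routine once the fundamental domain is fixed.

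It remains to show that if \(\alpha\) connects \(I\) and \(-I\) (one-cycle case), respectively connects \(V_{1}\cup\varepsilon V_{2}\) to its image under \(\delta\) for \emph{both} signs \(\varepsilon\) (two-cycle case), then \(\chi(\varphi,\alpha)\le 1\), so the value \(2\) is never attained and both sides of the claimed equivalence are false. This is the main obstacle, since it is exactly the genuinely nonorientable phenomenon and cannot be extracted from the orientable theorems by formal manipulation. Here I would use the surface \(\Sigma(\varphi,\alpha)\) glued from \(\#(\varphi)\) polygons according to \(\alpha\), whose topological Euler characteristic is \(\chi(\varphi,\alpha)\) (so connectivity of \(\alpha\) on the blocks \(\pm V_{i}\) gives \(\chi\le 2\), with equality iff \(\Sigma\cong S^{2}\), which is Lemma~\ref{lemma: Euler characteristic}): the point is that \(\Sigma\) is orientable precisely when the polygons admit a consistent orientation making every gluing orientation-reversing, which in the \(\pm I\)-encoding is exactly the failure of \(\alpha\) to connect \(I\) and \(-I\) (one cycle), respectively \(V_{1}\cup\varepsilon V_{2}\) and its \(\delta\)-image for some \(\varepsilon\) (two cycles); a nonorientable closed surface has \(\chi\le 1\). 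Alternatively one can run a combinatorial induction, peeling off \(\delta\)-conjugate pairs of cycles of \(\alpha\) and tracking the extra \(-1\) contributed by a crosscap, but the surface picture is cleaner. Assembling the two directions, together with the observation that \(\left.\pi\right|_{I}\) (resp.\ \(\left.\alpha\right|_{V_{1}\cup\varepsilon V_{2}}\)) is automatically a permutation of the fundamental domain when the connecting condition fails, yields the stated equivalences.
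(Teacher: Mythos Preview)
The paper does not prove this theorem: it is quoted from \cite{2011arXiv1101.0422R} (see the sentence immediately preceding the theorem statement), so there is no in-paper proof to compare against. I can therefore only comment on the soundness of your argument on its own terms.

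Your reduction in the non-connecting cases is correct. In the one-cycle case, once \(\pi\) preserves \(I\) you get \(\chi(\varphi,\pi)=1+\#(\beta)+\#(\varphi^{-1}\beta^{-1})-|I|\) exactly as you wrote, and Biane's theorem finishes it. In the two-cycle case the claim \(\#K(\varphi,\alpha)=2\#(\psi^{-1}\beta^{-1})\) does require the conjugation you allude to: with \(F=V_{1}\cup(-V_{2})\), \(\varphi_{1}=\varphi|_{V_{1}}\), \(\varphi_{2}=\varphi|_{V_{2}}\), one has \(K(\varphi,\alpha)|_{F}=\varphi_{1}^{-1}\beta^{-1}(\delta\varphi_{2}\delta)\), and conjugating by \(\delta\varphi_{2}\delta\) (which commutes with \(\varphi_{1}^{-1}\) by disjoint support) gives \(\psi^{-1}\beta^{-1}\). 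So the cycle counts match and Mingo--Nica applies. This part is fine.

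The remaining direction---showing \(\chi\le 1\) when \(\alpha\) connects \(I\) to \(-I\) (resp.\ connects both fundamental domains to their \(\delta\)-images)---is where your write-up is thinnest. You assert that the glued surface \(\Sigma(\varphi,\alpha)\) is orientable precisely when such a fundamental domain exists, and then invoke the classification of closed surfaces. This is true, and connectivity of \(\Sigma\) is guaranteed (one face in the first case; \(\alpha\) connecting \(\pm V_{1}\) and \(\pm V_{2}\) in the second), so a nonorientable \(\Sigma\) indeed has \(\chi\le 1\). But the equivalence ``orientable \(\Leftrightarrow\) \(\alpha\) admits a \(\delta\)-invariant bipartition of \(\pm I\) into \(F\) and \(-F\)'' deserves at least a sentence of justification: an orientation of \(\Sigma\) is a choice of orientation for each polygon such that every edge-gluing is orientation-reversing, and in the \(\pm I\) encoding the two orientations of the \(j\)th polygon correspond to taking its labels from \(V_{j}\) or from \(-V_{j}\); the gluing prescribed by \(\alpha\) is orientation-reversing on an edge exactly when \(\alpha\) keeps that edge's label in the chosen domain \(F\). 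Without this, the step reads as an appeal to folklore. The proof in \cite{2011arXiv1101.0422R} makes this orientability criterion explicit before applying the classification, and you should too.
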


\subsection{Matrices}

For an \(N\times N\) matrix \(X\), we denote the (usual) trace by \(\mathrm{Tr}\left(X\right):=\sum_{i=1}^{N}X_{ii}\).  We denote the normalized trace \(\mathrm{tr}\left(X\right):=\frac{1}{N}\mathrm{Tr}\left(X\right)\).

When indices appear on a matrix, we will move the subscript of a matrix to a bracketed superscript: \(X^{\left(k\right)}=X_{k}\).  We will also use negative integers to denote the transposes of the matrices: \(X^{\left(-k\right)}=X_{-k}=X_{k}^{T}\).

For \(I\subseteq\pm\left[n\right]\), we define the trace of \(N\times N\) matrices \(X_{1},\ldots,X_{n}\) along the cycles of a permutation \(\pi=\left(c_{1,1},\ldots,c_{1,n_{1}}\right)\cdots\left(c_{r,1},\ldots,c_{r,n_{r}}\right)\in S\left(I\right)\) by
\[\mathrm{Tr}_{\pi}\left(X_{1},\ldots,X_{n}\right):=\prod_{k=1}^{r}\mathrm{Tr}\left(X_{c_{k,1}}\cdots X_{c_{k,n_{k}}}\right)\textrm{.}\]
(We note that the matrices with subscripts not in \(I\) do not appear in the expression.)  The normalized trace of the matrices is defined similarly.

We will often use the following expression for products of traces of matrices, which may demonstrated by direct calculation:
\begin{lemma}
\label{lemma: traces}
Let \(I=\left\{n_{1},\ldots,n_{m}\right\}\subseteq\pm\left[n\right]\), and let \(\pi\in S\left(I\right)\).  Then for \(N\times N\) matrices \(X_{1},\ldots,X_{n}\),
\[\mathrm{Tr}_{\pi}\left(X_{1},\ldots,X_{n}\right)=\sum_{1\leq i_{n_{1}},\ldots,i_{n_{m}}\leq N}\prod_{k=1}^{m}X^{\left(n_{k}\right)}_{i_{n_{k}},i_{\pi\left(n_{k}\right)}}\textrm{.}\]
\end{lemma}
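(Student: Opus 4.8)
The plan is to prove this identity by directly expanding each trace factor in the product that defines $\mathrm{Tr}_{\pi}$. Write $\pi$ in cycle notation as $\pi=\left(c_{1,1},\ldots,c_{1,\ell_{1}}\right)\cdots\left(c_{r,1},\ldots,c_{r,\ell_{r}}\right)$, so that by definition $\mathrm{Tr}_{\pi}\left(X_{1},\ldots,X_{n}\right)=\prod_{k=1}^{r}\mathrm{Tr}\left(X_{c_{k,1}}\cdots X_{c_{k,\ell_{k}}}\right)$. Since the orbits of $\pi$ partition $I$, we have $\ell_{1}+\cdots+\ell_{r}=m$, and each element of $I$ occurs exactly once among the $c_{k,t}$.

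For a fixed $k$, I would expand the ordinary matrix product and trace over summation indices $j_{1},\ldots,j_{\ell_{k}}$:
\[\mathrm{Tr}\left(X_{c_{k,1}}\cdots X_{c_{k,\ell_{k}}}\right)=\sum_{1\leq j_{1},\ldots,j_{\ell_{k}}\leq N}X^{\left(c_{k,1}\right)}_{j_{1},j_{2}}X^{\left(c_{k,2}\right)}_{j_{2},j_{3}}\cdots X^{\left(c_{k,\ell_{k}}\right)}_{j_{\ell_{k}},j_{1}},\]
where one reads $X^{\left(-l\right)}_{a,b}=\left(X_{l}^{T}\right)_{a,b}=X^{\left(l\right)}_{b,a}$, which is exactly what the bracketed-superscript convention for transposes encodes, so the formula is uniform in the sign of each superscript. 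I would then relabel the dummy index $j_{t}$ as $i_{c_{k,t}}$ for $t=1,\ldots,\ell_{k}$; since $\pi$ sends $c_{k,t}$ to $c_{k,t+1}$ cyclically (with $c_{k,\ell_{k}+1}=c_{k,1}$), the second subscript of the $t$th factor becomes $j_{t+1}=i_{c_{k,t+1}}=i_{\pi\left(c_{k,t}\right)}$. Hence the $k$th trace factor equals $\sum\prod_{t=1}^{\ell_{k}}X^{\left(c_{k,t}\right)}_{i_{c_{k,t}},i_{\pi\left(c_{k,t}\right)}}$, a sum over the $\ell_{k}$ indices labelled by the elements of the $k$th orbit of $\pi$.

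Finally I would take the product over $k=1,\ldots,r$. Because the orbits of $\pi$ are disjoint and cover $I$, the index sets appearing in the $r$ factors are disjoint and together exhaust $\left\{i_{n_{1}},\ldots,i_{n_{m}}\right\}$, so the product of the $r$ sums collapses into a single sum over $1\leq i_{n_{1}},\ldots,i_{n_{m}}\leq N$, and the product of the $r$ per-orbit products is $\prod_{k=1}^{m}X^{\left(n_{k}\right)}_{i_{n_{k}},i_{\pi\left(n_{k}\right)}}$, since each element of $I$ contributes exactly one factor. This yields the claimed formula. The argument is pure index bookkeeping and poses no real obstacle; the only point meriting a moment's care is checking that the transpose convention $X^{\left(-k\right)}=X_{k}^{T}$ makes the single expression $X^{\left(n_{k}\right)}_{i_{n_{k}},i_{\pi\left(n_{k}\right)}}$ simultaneously correct for positive and negative superscripts $n_{k}$.
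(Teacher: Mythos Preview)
Your proof is correct and is precisely the ``direct calculation'' the paper alludes to; the paper does not spell out a proof of this lemma at all, merely stating that it may be demonstrated by direct calculation. Your cycle-by-cycle expansion and reindexing is the natural way to carry this out.
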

We will often write the choice of indices \(i_{n_{1}},\ldots,i_{n_{m}}\) as a function \(i:I\rightarrow\left[N\right]\).

\subsection{The Weingarten Calculus}

The Weingarten calculus was developed in \cite{MR1959915, MR2217291, MR2567222}.  We outline some of the proofs here; however, the only properties we will use are those marked as theorems.  Since we will only be using the real Weingarten function, we will use the symbol \(\mathrm{Wg}\) to refer to that function.

\begin{definition}
We define a Gram matrix indexed by \(\pi_{+},\pi_{-}\in{\cal P}_{2}\left(n\right)\) by \(G\left(\pi_{+},\pi_{-}\right)=N^{\#\left(\pi_{+}\vee\pi_{-}\right)}\).  (We will usually drop the dimension \(N\) from the notation.)  The Weingarten function \(\left[\mathrm{Wg}\left(\pi_{+},\pi_{-}\right)\right]_{\pi_{+},\pi_{-}\in{\cal P}_{2}\left(n\right)}\) is the pseudoinverse of \(G\).

The Weingarten function depends only on the size of the blocks of \(\pi_{+}\vee\pi_{-}\), each of which has an even number of elements (being the union of pairs).  If \(\pi_{+}\vee\pi_{-}\) has blocks of size \(n_{1},\ldots,n_{r}\), we may also index the Weingarten function by the Young diagram \(\lambda\vdash n/2\) with rows of length \(n_{1}/2,\ldots,n_{r}/2\).
\end{definition}

From Theorem~3.13 of \cite{MR2217291}:
\begin{theorem}
\label{theorem: leading order Weingarten}
The Weingarten function \(\mathrm{Wg}\left(\lambda\right)\) with \(\lambda=\left(\lambda_{1},\ldots,\lambda_{r}\right)\vdash n/2\) has leading-order term
\[\left(-1\right)^{n/2-r}C_{\lambda_{1}-1}\cdots C_{\lambda_{r}-1}N^{-n+\#\left(\pi_{1}\vee\pi_{2}\right)}\]
where \(C_{k}\) is the \(k\)th Catalan number \(C_{k}:=\frac{1}{k+1}\binom{2k}{k}\).
\end{theorem}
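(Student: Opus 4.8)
\emph{Proof proposal.}  The plan is to invert the Gram matrix \(G\) directly as a Neumann series, read off the leading power of \(N\) from the fact that \(\#\left(\pi_{+}\vee\pi_{-}\right)\) encodes a metric on \({\cal P}_{2}(n)\), and then evaluate the leading coefficient by Philip Hall's theorem as a product of M\"{o}bius numbers of noncrossing partition lattices.  First I would set \(\delta(\pi_{+},\pi_{-}):=n/2-\#\left(\pi_{+}\vee\pi_{-}\right)\geq 0\); since \(\pi_{+}\) and \(\pi_{-}\) both refine \(\pi_{+}\vee\pi_{-}\) and each has \(n/2\) blocks, \(\delta(\pi_{+},\pi_{-})=0\) if and only if \(\pi_{+}=\pi_{-}\).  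Hence \(N^{-n/2}G=I+E\), where \(E(\pi_{+},\pi_{-})=N^{-\delta(\pi_{+},\pi_{-})}\) for \(\pi_{+}\neq\pi_{-}\) and \(E(\pi,\pi)=0\); as \({\cal P}_{2}(n)\) is finite and \(\delta\geq 1\) off the diagonal, \(E\to 0\), so for \(N\) large \(G\) is invertible, the pseudoinverse \(\mathrm{Wg}\) equals \(G^{-1}\), and
\[\mathrm{Wg}(\pi_{+},\pi_{-})=N^{-n/2}\sum_{k\geq 0}(-1)^{k}E^{k}(\pi_{+},\pi_{-})\textrm{.}\]
Expanding \(E^{k}(\pi_{+},\pi_{-})\) as a sum over walks \(\pi_{+}=\sigma_{0},\sigma_{1},\ldots,\sigma_{k}=\pi_{-}\) with consecutive terms distinct, each walk contributes \(N^{-\sum_{i}\delta(\sigma_{i-1},\sigma_{i})}\), with coefficient exactly \(1\) because the entries of \(G\) are pure powers of \(N\).

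Next I would establish that \(\delta\) is a metric and extract the leading power.  For fixed-point-free involutions \(\sigma,\tau\) on an \(n\)-element set, each block of \(\sigma\vee\tau\) of size \(2m\) is an alternating cycle of \(\sigma\)- and \(\tau\)-edges on which \(\sigma\tau\) acts as a shift by two, hence splits into two \(m\)-cycles of \(\sigma\tau\); therefore \(\#\left(\sigma\tau\right)=2\#\left(\sigma\vee\tau\right)\), i.e.\ \(\left|\sigma\tau\right|=2\delta(\sigma,\tau)\), and from \(\sigma\rho=(\sigma\tau)(\tau\rho)\) together with subadditivity of transposition length, \(\delta\) obeys the triangle inequality.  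Consequently every walk above has \(\sum_{i}\delta(\sigma_{i-1},\sigma_{i})\geq\delta(\pi_{+},\pi_{-})=:\delta\), only finitely many \(k\) contribute to the coefficient of \(N^{-\delta}\), the leading power of \(N\) in \(\mathrm{Wg}(\pi_{+},\pi_{-})\) is \(N^{-n/2-\delta}=N^{-n+\#\left(\pi_{+}\vee\pi_{-}\right)}\), and its coefficient is \(\sum_{k}(-1)^{k}g_{k}\), where \(g_{k}\) counts the walks attaining equality.  Along such a walk the ranks \(\delta(\pi_{+},\sigma_{i})\) strictly increase, so the geodesic set \(P:=\left\{\sigma:\delta(\pi_{+},\sigma)+\delta(\sigma,\pi_{-})=\delta\right\}\) is a finite poset under \(\sigma\leq\tau\iff\delta(\pi_{+},\sigma)+\delta(\sigma,\tau)=\delta(\pi_{+},\tau)\), with least element \(\pi_{+}\) and greatest element \(\pi_{-}\), and \(g_{k}\) is precisely the number of strict \(k\)-chains from \(\pi_{+}\) to \(\pi_{-}\) in \(P\); by Philip Hall's theorem (see \cite{MR1311922}) the leading coefficient equals \(\mu_{P}(\pi_{+},\pi_{-})\).

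Finally I would identify \(P\).  Since \(\pi_{+}\) and \(\pi_{-}\) both preserve every block of \(\pi_{+}\vee\pi_{-}\), they lie in the Young subgroup \(W=\prod_{B}S(B)\) of that partition, which is convex in the Cayley graph of \(S_{n}\): in a minimal transposition word for an element \(w\in W\) the final transposition must split a cycle of \(w\), so its two points share a cycle and hence a block, and one finishes by induction.  Thus every \(\sigma\in P\) also lies in \(W\), i.e.\ restricts to a pairing of each block \(B\), with \(\delta\) additive over the blocks, so \(P\cong\prod_{B}P_{B}\), with \(P_{B}\) the geodesic poset of pairings of \(B\) between \(\pi_{+}|_{B}\) and \(\pi_{-}|_{B}\).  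For a single block \(\pi_{+}|_{B}\) and \(\pi_{-}|_{B}\) form one loop, and the noncrossing-partition machinery recalled above (Biane's theorem and its annular analogue, \cite{MR1475837, MR2052516}) identifies \(P_{B}\) --- via \(\sigma\mapsto\sigma\vee\pi_{+}|_{B}\), read as a partition of the \(\left|B\right|/2\) pairs of \(\pi_{+}|_{B}\) in the cyclic order given by \(\pi_{-}|_{B}\) --- with the lattice \(NC\left(\left|B\right|/2\right)\) of noncrossing partitions, the rank \(\delta(\pi_{+}|_{B},\sigma)\) becoming \(\left|B\right|/2\) minus the number of blocks.  Since the M\"{o}bius function of a product is the product of the M\"{o}bius functions and \(\mu_{NC(\ell)}(0_{\ell},1_{\ell})=(-1)^{\ell-1}C_{\ell-1}\) (Kreweras; see \cite{MR2266879}), writing \(2\lambda_{1},\ldots,2\lambda_{r}\) for the block sizes of \(\pi_{+}\vee\pi_{-}\) gives
\[\sum_{k}(-1)^{k}g_{k}=\prod_{j=1}^{r}(-1)^{\lambda_{j}-1}C_{\lambda_{j}-1}=(-1)^{n/2-r}C_{\lambda_{1}-1}\cdots C_{\lambda_{r}-1}\textrm{,}\]
which together with the leading power is the assertion (and being nonzero it is indeed the leading term).

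The main obstacle is this last identification: showing that the poset of geodesic pairings through a single loop is order-isomorphic to a noncrossing partition lattice with the stated rank function.  Everything else --- the metric property of \(\delta\), the Neumann-series bookkeeping, the convexity of Young subgroups, and Philip Hall's formula --- is routine, and the Catalan numbers enter only through Kreweras's evaluation of the M\"{o}bius function of \(NC(\ell)\).  I would set the identification up either from scratch, via Biane's theorem applied to the \(\ell\)-cycle that \(\pi_{+}\pi_{-}\) induces on a transversal of \(\pi_{+}\), or by invoking the known structure of intervals of pairings; in either case the analytic input is only the triviality that \(G\) has pure-power entries, and all the content is combinatorial.
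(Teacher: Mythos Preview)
The paper does not give its own proof of this theorem: it is quoted verbatim from Theorem~3.13 of \cite{MR2217291} (Collins--\'{S}niady), and the surrounding text only records two ingredients --- the Neumann-series expression
\[\mathrm{wg}\left(\pi_{+},\pi_{-}\right)=N^{n/2-\#\left(\pi_{+}\vee\pi_{-}\right)}\sum_{k\geq 0}\sum_{\substack{\pi_{0},\ldots,\pi_{k}\\\pi_{0}=\pi_{+},\pi_{k}=\pi_{-}\\\pi_{0}\neq\pi_{1}\neq\cdots\neq\pi_{k}}}\left(-1\right)^{k}N^{-\left|\pi_{0}\pi_{1}\right|/2-\cdots-\left|\pi_{k-1}\pi_{k}\right|/2}\]
and, in the paragraph after Theorem~\ref{theorem: Weingarten cumulant}, the observation that \(\left(\pi_{1},\pi_{2}\right)\mapsto\left|\pi_{1}\pi_{2}\right|\) is a metric with \(\left|\pi_{1}\pi_{2}\right|=n-2\#\left(\pi_{1}\vee\pi_{2}\right)\) --- which it uses only to bound the order of the cumulants, not to pin down the leading coefficient.

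Your proposal is correct and is, in fact, essentially the argument of \cite{MR2217291} that the paper is citing.  Your first two steps (Neumann series and the metric \(\delta=\left|\cdot\right|/2\)) coincide exactly with what the paper writes down; what you add beyond the paper is the evaluation of the leading coefficient.  Recasting the alternating sum over geodesic walks as \(\mu_{P}\left(\pi_{+},\pi_{-}\right)\) via Philip Hall, factoring \(P\) over the blocks of \(\pi_{+}\vee\pi_{-}\) by convexity of the Young subgroup, and then identifying each block-factor with \(NC\left(\lambda_{j}\right)\) so that Kreweras gives \(\left(-1\right)^{\lambda_{j}-1}C_{\lambda_{j}-1}\), is the standard route and yields the stated product.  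The one place where your sketch is thin is the isomorphism \(P_{B}\cong NC\left(\left|B\right|/2\right)\): the map \(\sigma\mapsto\sigma\vee\left.\pi_{+}\right|_{B}\) does land in noncrossing partitions of the \(\pi_{+}\)-pairs (with the cyclic order coming from the single loop \(\pi_{+}\pi_{-}\)) and is rank-preserving, but you should say explicitly why it is a bijection onto \(NC\) --- equivalently, why a pairing \(\sigma\) on a single \(2\ell\)-loop lies on the geodesic iff the induced partition is noncrossing.  This is where Biane's characterisation (Theorem after Definition~2.9 in the paper) is doing the work, applied to the \(\ell\)-cycle that \(\pi_{+}\pi_{-}\) induces on a transversal of \(\pi_{+}\); once that is nailed down, the rest is bookkeeping.
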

We will denote the normalized Weingarten function by
\[\mathrm{wg}\left(\pi_{+},\pi_{-}\right):=N^{n-\#\left(\pi_{+}\vee\pi_{-}\right)}\mathrm{Wg}\left(\pi_{+},\pi_{-}\right)\textrm{.}\]
We note in particular that \(\mathrm{wg}\left(1\right)=1\).

The Weingarten function can be expressed:
\[\mathrm{wg}\left(\pi_{+},\pi_{-}\right)=N^{n/2-\#\left(\pi_{+}\vee\pi_{-}\right)}\sum_{k\geq 0}\sum_{\substack{\pi_{0},\ldots,\pi_{k}\\\pi_{0}=\pi_{+},\pi_{k}=\pi_{-}\\\pi_{0}\neq\pi_{1}\neq\cdots\neq\pi_{k}}}\left(-1\right)^{k}N^{-\left|\pi_{0}\pi_{1}\right|/2-\ldots-\left|\pi_{k-1}\pi_{k}\right|/2}\]
(see \cite{MR2217291}).

The following appears in Corollary~3.4 of \cite{MR2217291}:
\begin{theorem}[Collins, \'{S}niady]
\label{theorem: integral}
Let \(O\) be a Haar-distributed orthogonal matrix.  Then
\[\mathbb{E}\left(O_{i_{1}j_{1}}\cdots O_{i_{n}j_{n}}\right)=\sum_{\substack{\left(\pi_{+},\pi_{-}\right)\in{\cal P}_{2}\left(n\right)\\i_{k}=i\circ\pi_{+},j_{k}=j\circ\pi_{-}}}\mathrm{Wg}\left(\pi_{+},\pi_{-}\right)\textrm{.}\]
\end{theorem}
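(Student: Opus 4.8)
The plan is to recover this identity from the classical invariant-theoretic computation of Collins and \'{S}niady. Regard $P:=\mathbb{E}\left(O^{\otimes n}\right)$ both as the tensor with entries $P_{ij}=\mathbb{E}\left(O_{i_{1}j_{1}}\cdots O_{i_{n}j_{n}}\right)$ and as the corresponding operator on $\left(\mathbb{R}^{N}\right)^{\otimes n}$; this $P$ is exactly the quantity to be computed.

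\emph{Step 1: $P$ is an orthogonal projection onto the space of invariants.} Since $O$, $O^{-1}=O^{T}$, and $OO'$ (for $O,O'$ independent and Haar-distributed) are again Haar-distributed, one has $P^{T}=\mathbb{E}\left(\left(O^{T}\right)^{\otimes n}\right)=P$ and $P^{2}=\mathbb{E}\left(\left(OO'\right)^{\otimes n}\right)=P$, the latter pulling the two expectations together by independence. Thus $P$ is the orthogonal projection, with respect to the standard inner product, onto its range. Moreover $O^{\otimes n}P=P=PO^{\otimes n}$ for each $O\in O\left(N\right)$, so the range of $P$ is precisely the space of $O\left(N\right)$-invariant tensors in $\left(\mathbb{R}^{N}\right)^{\otimes n}$; compactness of $O\left(N\right)$, which makes Haar measure finite, is essential here.

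\emph{Step 2: identify the invariants.} By the first fundamental theorem of invariant theory for the orthogonal group, this invariant subspace is spanned by the contraction tensors $\Delta_{\pi}$, $\pi\in{\cal P}_{2}\left(n\right)$, with $\left(\Delta_{\pi}\right)_{i}=\prod_{\left\{a,b\right\}\in\pi}\delta_{i_{a}i_{b}}$; in particular it is trivial for odd $n$, consistent with replacing $O$ by $-O$ to get $\mathbb{E}\left(O_{i_{1}j_{1}}\cdots\right)=\left(-1\right)^{n}\mathbb{E}\left(O_{i_{1}j_{1}}\cdots\right)$. Because we work with the full orthogonal group, no Levi--Civita tensor intervenes and pairings alone suffice.

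\emph{Step 3: express the projection via the Gram pseudoinverse and read off entries.} It is elementary linear algebra that the orthogonal projection onto the span of vectors $\left\{\Delta_{\pi}\right\}$ with Gram matrix $G\left(\pi_{+},\pi_{-}\right)=\left\langle\Delta_{\pi_{+}},\Delta_{\pi_{-}}\right\rangle$ equals $\sum_{\pi_{+},\pi_{-}}\mathrm{Wg}\left(\pi_{+},\pi_{-}\right)\Delta_{\pi_{+}}\otimes\Delta_{\pi_{-}}$ with $\mathrm{Wg}=G^{+}$ the Moore--Penrose pseudoinverse: from $GG^{+}G=G$ one checks that $\Delta_{\gamma}-\sum_{\pi}\left(G^{+}G\right)\left(\pi,\gamma\right)\Delta_{\pi}$ is orthogonal to every $\Delta_{\beta}$ while lying in their span, hence vanishes, so this explicit operator fixes each $\Delta_{\gamma}$; together with symmetry of $G^{+}$ and containment of its range in the span, this operator is the orthogonal projection onto the span, which by Step~1 is $P$. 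Finally $G\left(\pi_{+},\pi_{-}\right)=\#\left\{i:\left(\Delta_{\pi_{+}}\right)_{i}=\left(\Delta_{\pi_{-}}\right)_{i}=1\right\}=N^{\#\left(\pi_{+}\vee\pi_{-}\right)}$ is the Gram matrix of the Weingarten definition, and reading off the $\left(i,j\right)$-entry of $P$, the factor $\left(\Delta_{\pi_{+}}\right)_{i}$ enforces $i_{k}=i\circ\pi_{+}$ and $\left(\Delta_{\pi_{-}}\right)_{j}$ enforces $j_{k}=j\circ\pi_{-}$, giving the stated sum.

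The substantive input is Step~2 — the first fundamental theorem — everything else being Haar invariance and finite-dimensional linear algebra. The one point that must be handled with care, and the reason a pseudoinverse rather than an inverse appears, is that for $N<n$ the tensors $\Delta_{\pi}$ become linearly dependent and $G$ singular; the argument above treats all $N$ uniformly and never needs to separate the case $N\geq n$.
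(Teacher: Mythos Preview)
The paper does not prove this statement at all; it is quoted as Corollary~3.4 of Collins and \'{S}niady and used as a black box. Your argument is correct and is essentially the original Collins--\'{S}niady proof: identify $\mathbb{E}\left(O^{\otimes n}\right)$ as the orthogonal projection onto the $O\left(N\right)$-invariants, invoke the first fundamental theorem to describe that space as the span of the contraction tensors $\Delta_{\pi}$, and then write the projection in the $\Delta_{\pi}$ frame via the pseudoinverse of the Gram matrix.

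One small remark on Step~3, where the exposition is terse: the key identity you use is that the coefficient vector $e_{\gamma}-G^{+}Ge_{\gamma}$ lies in $\ker G$, which is exactly the space of linear dependencies among the $\Delta_{\pi}$, so the corresponding combination of the $\Delta_{\pi}$ vanishes. This is precisely what you say, but it may help the reader to make the identification $\ker G=\left\{c:\sum_{\pi}c_{\pi}\Delta_{\pi}=0\right\}$ explicit, since that is where the possibly-singular case $N<n$ is absorbed. With that clarification the argument is complete and self-contained modulo the first fundamental theorem.
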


\begin{definition}
For \(\rho,\sigma\in{\cal P}\left(n\right)\) with \(\pi_{+}\vee\pi_{-}\preceq\rho\preceq\sigma\), we define a sort of cumulant of the Weingarten function by
\[\prod_{V\in\sigma}\mathrm{wg}\left(\left.\pi_{+}\right|_{V},\left.\pi_{-}\right|_{V}\right)=\sum_{\tau:\rho\preceq\tau\preceq\sigma}C_{\pi_{+}\vee\pi_{-},\rho,\tau}\textrm{,}\]
or equivalently
\[C_{\pi_{+}\vee\pi_{-},\rho,\sigma}=\sum_{\rho\preceq\tau\preceq\sigma}\mu\left(\tau,\sigma\right)\prod_{V\in\tau}\mathrm{wg}\left(\left.\pi_{+}\right|_{V},\left.\pi_{-}\right|_{V}\right)\textrm{.}\]
(We use the normalized Weingarten function instead of the usual one as in \cite{MR2217291}, since it appears more often in this paper.)
\end{definition}

The following appears in Theorem~3.16 of \cite{MR2217291}:
\begin{theorem}[Collins, \'{S}niady]
\label{theorem: Weingarten cumulant}
For \(\pi_{+},\pi_{-}\in{\cal P}_{2}\left(n\right)\), and \(\rho,\sigma\in{\cal P}\left(n\right)\) with \(\pi_{+}\vee\pi_{-}\preceq\rho\preceq\sigma\), the cumulant \(C_{\pi_{+}\vee\pi_{-},\rho,\sigma}\) is of order \(N^{2\left(\#\left(\sigma\right)-\#\left(\rho\right)\right)}\).
\end{theorem}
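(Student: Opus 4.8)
The plan is to reduce to the connected case $\sigma=1_{\left[n\right]}$ and then to induct on $\#\left(\rho\right)$. For the reduction I would use that the interval $\left[\rho,\sigma\right]$ in the partition lattice is the product $\prod_{S\in\sigma}\left[\left.\rho\right|_{S},1_{S}\right]$, and that both $\mu\left(\tau,\sigma\right)$ and $\prod_{V\in\tau}\mathrm{wg}\left(\left.\pi_{+}\right|_{V},\left.\pi_{-}\right|_{V}\right)$ factor over the blocks of $\sigma$ — the second of these because $\pi_{+}\vee\pi_{-}\preceq\sigma$ forces each $\left.\pi_{\pm}\right|_{V}$ occurring below to be an honest restriction. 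This gives
\[C_{\pi_{+}\vee\pi_{-},\rho,\sigma}=\prod_{S\in\sigma}C_{\left.\left(\pi_{+}\vee\pi_{-}\right)\right|_{S},\left.\rho\right|_{S},1_{S}}\textrm{,}\]
and since $\sum_{S\in\sigma}\left(1-\#\left(\left.\rho\right|_{S}\right)\right)=\#\left(\sigma\right)-\#\left(\rho\right)$, it suffices to prove that $C_{\pi_{+}\vee\pi_{-},\rho,1_{\left[n\right]}}$ is of order $N^{2\left(1-\#\left(\rho\right)\right)}$.

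For the connected case I would induct on $\#\left(\rho\right)$. When $\#\left(\rho\right)=1$ this is just $\mathrm{wg}\left(\pi_{+},\pi_{-}\right)$, which is $O\left(1\right)$: in the series for $\mathrm{wg}$ the prefactor $N^{n/2-\#\left(\pi_{+}\vee\pi_{-}\right)}$ offsets exactly the top term $N^{-\left|\pi_{+}\pi_{-}\right|/2}$ of the sum (note $n/2-\#\left(\pi_{+}\vee\pi_{-}\right)=\left|\pi_{+}\pi_{-}\right|/2$), and the triangle inequality $\left|\pi_{0}\pi_{k}\right|\leq\sum_{j}\left|\pi_{j-1}\pi_{j}\right|$ shows every remaining term carries a strictly negative power. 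For the inductive step, the defining relation of the Weingarten cumulants, taken with our $\rho$ and with $\sigma=1_{\left[n\right]}$, reads
\[C_{\pi_{+}\vee\pi_{-},\rho,1_{\left[n\right]}}=\mathrm{wg}\left(\pi_{+},\pi_{-}\right)-\sum_{\rho\preceq\tau\prec 1_{\left[n\right]}}C_{\pi_{+}\vee\pi_{-},\rho,\tau}\textrm{;}\]
by the reduction and the inductive hypothesis each summand is $O\bigl(N^{2\left(\#\left(\tau\right)-\#\left(\rho\right)\right)}\bigr)$, hence merely $O\left(1\right)$, so the naive bound overshoots the claimed one by the factor $N^{2\left(\#\left(\rho\right)-1\right)}$. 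The content of the theorem is that the leading parts of $\mathrm{wg}\left(\pi_{+},\pi_{-}\right)$ and of the lower cumulants cancel.

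To exhibit this cancellation I would substitute the series expansion of every $\mathrm{wg}$ into the definition of $C_{\pi_{+}\vee\pi_{-},\rho,1_{\left[n\right]}}$. A term is then indexed by a partition $\tau$ with $\rho\preceq\tau\preceq 1_{\left[n\right]}$ and, for each block $V$ of $\tau$, a walk $\gamma_{V}$ in ${\cal P}_{2}\left(V\right)$ from $\left.\pi_{+}\right|_{V}$ to $\left.\pi_{-}\right|_{V}$; because $\sum_{V\in\tau}\bigl|\left(\left.\pi_{+}\right|_{V}\right)\!\left(\left.\pi_{-}\right|_{V}\right)\bigr|=\left|\pi_{+}\pi_{-}\right|$ is independent of $\tau$, such a term contributes $\pm N^{-\sum_{V}e\left(\gamma_{V}\right)}$, where $e\left(\gamma_{V}\right)\geq 0$ measures the failure of $\gamma_{V}$ to be a geodesic in ${\cal P}_{2}\left(V\right)$. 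I would then group the terms by the partition $\eta$ of $\left[n\right]$ generated by $\pi_{+}\vee\pi_{-}$ together with every pairing visited by every walk: for each fixed $\eta$ the weights $\mu\left(\tau,1_{\left[n\right]}\right)$ telescope, so that the net contribution of every $\eta$ failing to connect all the blocks of $\rho$ is zero, whereas for an $\eta$ that does connect them a counting argument gives $\sum_{V}e\left(\gamma_{V}\right)\geq 2\left(\#\left(\rho\right)-1\right)$: each of the $\#\left(\rho\right)-1$ connections among blocks of $\rho$ must be both created and later removed along the walks, and since the only pairing compatible with such a split that lies at distance one from a connecting pairing is the obvious one, a round trip out of the compatible region and back lengthens a walk by at least $4$, i.e. increases $e$ by at least $2$. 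Making the telescoping and this final counting precise — in particular extracting the factor $2$ rather than only $\#\left(\rho\right)-1$ — is the delicate heart of the matter; it is carried out (in the language of the Brauer algebra and its jump elements $\Sigma$) in Theorem~3.16 of \cite{MR2217291}, which one may also simply cite.
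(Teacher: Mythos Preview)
Your proposal and the paper converge on the same two–step skeleton: first obtain a path formula for the cumulant (a signed sum over walks \(\pi_{0}=\pi_{+},\pi_{1},\ldots,\pi_{k}=\pi_{-}\) in \({\cal P}_{2}(n)\) subject to the connectivity constraint \(\rho\vee\pi_{0}\vee\cdots\vee\pi_{k}=\sigma\)), then bound the exponent by showing such a walk must be at least \(4\bigl(\#(\rho)-\#(\sigma)\bigr)\) longer than the geodesic. Your final ``round trip'' counting is the same heuristic the paper gives, phrased there via Lemma~\ref{lemma: useful}: a step \(\pi_{j}\) that merges \(m\) blocks of the running join has \(\left|\pi_{j}\pi_{j\pm 1}\right|\geq 2m\), so its insertion costs at least \(4m\) in total length.

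The packaging differs in two places. Your reduction to \(\sigma=1_{[n]}\) by multiplicativity over blocks of \(\sigma\) is a clean simplification the paper does not make; the paper works directly with general \(\sigma\). The induction on \(\#(\rho)\) is, as you note, a detour that cannot close on its own. More substantively, where you invoke a ``M\"obius telescope'' over \(\tau\) after substituting the series for each \(\mathrm{wg}\), the paper instead uses Collins's convolution identity \(S(g\ast h)=S(g)S(h)\) to turn \(\prod_{V\in\tau}\mathrm{wg}(\left.\pi_{+}\right|_{V},\left.\pi_{-}\right|_{V})\) into a signed sum over \emph{global} walks \(\preceq\tau\), and then matches the defining relation of \(C\) by M\"obius uniqueness. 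Your telescoping is the dual organisation of the same computation, but it only works once you have identified the local walk data \((\gamma_{V})_{V\in\tau}\) with a global walk carrying the correct sign \((-1)^{k}\); that identification (handling the shuffles and the collapsing of repeated pairings) is precisely what the convolution lemma supplies, and you have not made it explicit. Without it, the statement ``for fixed \(\eta\) the weights \(\mu(\tau,1_{[n]})\) telescope'' is not yet justified, since different \(\tau\) carry genuinely different local walk data with different signs, not one common summand multiplied by \(\mu(\tau,1_{[n]})\). Once you either import the convolution identity or prove the equivalent shuffle bijection, your argument and the paper's coincide.
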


The cumulants may be expressed
\[C_{\pi_{+}\vee\pi_{-},\rho,\sigma}=N^{n/2-\#\left(\pi_{+}\vee\pi_{-}\right)}\sum_{k\geq 0}\sum_{\substack{\pi_{0},\ldots,\pi_{k}\\\pi_{0}=\pi_{+},\pi_{k}=\pi_{-}\\\pi_{0}\neq\pi_{1}\neq\cdots\neq\pi_{k}\\\rho\vee\pi_{0}\vee\cdots\vee\pi_{k}=\sigma}}\left(-1\right)^{k}N^{-\left|\pi_{0}\pi_{1}\right|/2-\ldots-\left|\pi_{k-1}\pi_{k}\right|/2}\textrm{.}\]
To see this, we define an operator \(S\) on functions on the positive integers \(\mathbb{N}^{\ast}\rightarrow\mathbb{C}\) which are eventually zero by
\[S\left(f\right)=\sum_{k=1}^{\infty}\left(-1\right)^{k}f\left(k\right)\]
and a product \(g\ast h\) on such functions by
\[\left(g\ast h\right)\left(k\right)=\sum_{U_{1},U_{2}\subseteq\left[k\right]:U_{1}\cup U_{2}=\left[k\right]}g\left(\left|U_{1}\right|\right)h\left(\left|U_{2}\right|\right)\textrm{.}\]
The convolution \(\ast\) is associative, and \(S\left(g\ast h\right)=S\left(g\right)S\left(h\right)\) (proven in \cite{MR1959915}).  Let \(\rho,\sigma\in{\cal P}\left(n\right)\), \(\sigma=\left\{V_{1},\ldots,V_{r}\right\}\), and \(l\) a nonnegative integer.  For \(j\), \(1\leq j\leq r\), and nonnegative integers \(l_{1},\ldots,l_{r}\) with \(l_{1}+\cdots+l_{r}=l\), define \(f_{j}^{\left(l_{j}\right)}:\mathbb{N^{\ast}}\rightarrow\mathbb{C}\) by letting \(f_{j}^{\left(l_{j}\right)}\left(k\right)\) be the number of ordered \(\left(k+1\right)\)-tuples \(\left(\pi_{0},\ldots,\pi_{k}\right)\) with \(\pi_{0},\ldots,\pi_{k}\in{\cal P}_{2}\left(V_{j}\right)\), \(\pi_{0}=\left.\pi_{+}\right|_{V_{j}}\), \(\pi_{k}=\left.\pi_{-}\right|_{V_{j}}\), \(\pi_{0}\neq\pi_{1}\neq\cdots\neq\pi_{k}\), and \(\left|\pi_{0}\pi_{1}\right|+\cdots+\left|\pi_{k-1}\pi_{k}\right|=2l_{j}\), and define \(g_{j}^{\left(l_{j}\right)}\) to be the number of tuples such that, in addition, \(\left.\rho\right|_{V_{j}}\vee\pi_{0}\vee\cdots\vee\pi_{k}=\left\{V_{j}\right\}\).  Then the coefficient of \(N^{n/2-\#\left(\pi_{+}\vee\pi_{-}\right)-l}\) in \(\prod_{j=1}^{r}\mathrm{wg}\left(\left.\pi_{+}\right|_{V_{j}},\left.\pi_{-}\right|_{V_{j}}\right)\) is
\begin{multline*}
\sum_{l_{1},\ldots,l_{r}:l_{1}+\cdots+l_{r}=l}S\left(f_{1}^{\left(l_{1}\right)}\right)\cdots S\left(f_{r}^{\left(l_{r}\right)}\right)\\=\sum_{l_{1},\ldots,l_{r}:l_{1}+\cdots+l_{r}=l}S\left(f_{1}^{\left(l_{1}\right)}\ast\cdots\ast f_{r}^{\left(l_{r}\right)}\right)\textrm{,}
\end{multline*}
which is the coefficient of \(N^{n/2-\#\left(\pi_{+}\vee\pi_{-}\right)-l}\) in
\[N^{n/2-\#\left(\pi_{+}\vee\pi_{-}\right)}\sum_{k\geq 0}\sum_{\substack{\pi_{0},\ldots,\pi_{k}\\\pi_{0},\cdots\pi_{k}\preceq\sigma\\\pi_{0}=\pi_{+},\pi_{k}=\pi_{-}\\\pi_{0}\neq\pi_{1}\neq\cdots\neq\pi_{k}}}\left(-1\right)^{k}N^{-\left|\pi_{0}\pi_{1}\right|/2-\ldots-\left|\pi_{k-1}\pi_{k}\right|/2}\]
(where the subset \(U_{j}\) corresponding to \(f_{j}^{\left(l_{j}\right)}\) in the convolution is the set of \(i\in\left[k\right]\) for which \(\left.\pi_{i}\right|_{V_{j}}\neq\left.\pi_{i-1}\right|_{V_{j}}\)).  For \(\tau\in{\cal P}\left(n\right)\), the coefficient of \(N^{n/2-\#\left(\pi_{+}\vee\pi_{-}\right)-l}\) in the sum of only the terms with \(\rho\vee\pi_{0}\vee\cdots\vee\pi_{k}=\tau\) is
\[\sum_{l_{1},\ldots,l_{r}:l_{1}+\cdots+l_{r}=l}S\left(g_{1}^{\left(l_{1}\right)}\ast\cdots\ast g_{r}^{\left(l_{r}\right)}\right)=\sum_{l_{1},\ldots,l_{r}:l_{1}+\cdots+l_{r}=l}S\left(g_{1}^{\left(l_{1}\right)}\right)\cdots S\left(g_{r}^{\left(l_{r}\right)}\right)\textrm{,}\]
which is the coefficient of \(N^{n/2-\#\left(\pi_{+}\vee\pi_{-}\right)-l}\) in the given expression for the cumulants.

To put a bound on the order of the cumulant, we note that \(\left(\pi_{1},\pi_{2}\right)\mapsto\left|\pi_{1}\pi_{2}\right|\) is a distance metric.  If joining \(\rho\vee\pi_{0}\vee\pi_{j_{1}}\vee\cdots\vee\pi_{j_{r}}\vee\pi_{k}\)) (\(0<j_{1}<\cdots<j_{r}<k\) with \(\pi\) reduces the number of blocks by \(m\), then it must reduce the number of blocks of any \(\pi_{j_{s}}\) by at least as much (Lemma~\ref{lemma: useful}).  Since \(\left|\pi_{1},\pi_{2}\right|=n-2\#\left(\pi_{1}\vee\pi_{2}\right)\) (Lemma~\ref{lemma: loops} below shows that \(\#\left(\pi_{1}\pi_{2}\right)=2\#\left(\pi_{1}\vee\pi_{2}\right)\)), we have \(\left|\pi\pi_{j_{s}}\right|,\left|\pi\pi_{j_{s+1}}\right|\geq 2m\), so inserting \(\pi\) between any \(\pi_{j_{s}}\) and \(\pi_{j_{s+1}}\) increases the total distance of the path by 4m.  Thus, if we require our list of \(\pi_{j}\) to reduce the number of blocks of \(\rho\) to \(\#\left(\sigma\right)\), the total distance must be at least \(4\left(\#\left(\rho\right)-\#\left(\sigma\right)\right)\), and the bound follows.

\subsection{Freeness}

\begin{definition}
A noncommutative probability space is a pair \(\left(A,\phi_{1}\right)\) consisting of a unital algebra \(A\) and a linear functional \(\phi_{1}\) such that \(\phi_{1}\left(1_{A}\right)=1\).

A second-order probability space is a triple \(\left(A,\phi_{1},\phi_{2}\right)\), where \(\left(A,\phi_{1}\right)\) is a noncommutative probability space, and \(\phi_{2}:A^{2}\rightarrow\mathbb{C}\) is a function which is tracial in each argument such that for any \(a\in A\), \(\phi_{2}\left(1_{A},a\right)=\phi_{2}\left(a,1_{A}\right)=0\).

A real second-order probability space is a second-order probability space equipped with an involution \(a\mapsto a^{t}\) such that for any \(a,b\in A\), \(\left(ab\right)^{t}=b^{t}a^{t}\).
\end{definition}
When we consider algebras generated by elements of a real second-order probability space, we also require the algebra to be closed under this involution (which is the transpose when we are considering finite random matrices).

\begin{definition}
We say that an element \(a\in A\) of a noncommutative probability space is {\em centred} if \(\phi_{1}\left(a\right)=0\).  We may centre an element \(a\) by subtracting \(\phi_{1}\left(a\right)\) (we will denote multiples of the unit by scalars): we denote the centred element by \(\mathaccent"7017{a}:=a-\phi_{1}\left(a\right)\).
\end{definition}

\begin{definition}
We say that a word \(w:\left[n\right]\rightarrow\left[C\right]\) is {\em alternating} if \(w\left(1\right)\neq w\left(2\right)\neq\cdots\neq w\left(n\right)\).  We say that it is {\em cyclically alternating} if, in addition, \(w\left(n\right)\neq w\left(1\right)\).

For \(A_{1},\ldots,A_{C}\) subalgebras of \(A\), we say that \(a_{1},\ldots,a_{n}\) are alternating if, for \(1\leq i\leq n\), \(a_{i}\in A_{k_{i}}\) with \(k_{1}\neq k_{2}\neq\cdots\neq k_{n}\).  We say that they are cyclically alternating if, in addition, \(k_{n}\neq k_{1}\).
\end{definition}

\begin{definition}
Subspaces \(A_{1},\ldots,A_{C}\) of a noncommutative probability space are free if, for any \(a_{1},\ldots,a_{n}\) centred and alternating,
\[\phi_{1}\left(a_{1},\ldots,a_{n}\right)=0\textrm{.}\]
Subsets of \(A\) are free if the algebras they generate are free.

Subspaces \(A_{1},\ldots,A_{C}\) of a real second-order probability space are real second-order free if they are free, and for any \(a_{1},\ldots,a_{p}\) and \(b_{1},\ldots,b_{q}\) centred and cyclically alternating, we have for \(p\neq q\):
\[\phi_{2}\left(a_{1}\cdots a_{p},b_{1}\cdots b_{q}\right)=0\textrm{,}\]
and, taking subscripts modulo the appropriate range (as we will do throughout), for \(p=q\),
\[\phi_{2}\left(a_{1}\cdots a_{p},b_{1}\cdots b_{q}\right)=\sum_{k=0}^{p-1}\prod_{i=1}^{p}\varphi\left(a_{i}b_{k-i}\right)+\sum_{k=0}^{p-1}\prod_{i=1}^{p}\varphi\left(a_{i}b_{k+i}^{t}\right)\textrm{.}\]
Subsets of \(A\) are real second-order free if the algebras the generate are free.
\end{definition}

\begin{definition}
If we have a set of \(N\times N\) random matrices \(\left\{X_{\lambda}\right\}_{\lambda\in\Lambda}\) for \(N\) arbitrarily large (we will generally suppress \(N\) in the notation), we say that they have a (first-order) limit distribution if there is a noncommutative probability space \(\left(A,\phi_{1}\right)\) with \(\left\{x_{\lambda}\right\}_{\lambda\in\Lambda}\subseteq A\) such that, for any noncommutative polynomial \(p\) in \(m\) variables, we have
\[\lim_{N\rightarrow\infty}\mathbb{E}\left(\mathrm{tr}\left(p\left(X_{\lambda_{1}},\ldots,X_{\lambda_{m}}\right)\right)\right)=\phi_{1}\left(p\left(x_{\lambda_{1}},\ldots,x_{\lambda_{m}}\right)\right)\]
while any higher cumulant of such normalized traces vanishes as \(N\rightarrow\infty\).

We say that random matrices \(\left\{X_{\lambda}\right\}_{\lambda\in\Lambda}\) have a second-order limit distribution if, in addition, we have a \(\phi_{2}\) such that \(\left(A,\phi_{1},\phi_{2}\right)\) is a second-order probability space, and for \(p_{1},p_{2},\ldots\) noncommutative polynomials in \(n_{1},n_{2},\ldots\) variables, we have
\begin{multline*}
\lim_{N\rightarrow\infty}k_{2}\left(\mathrm{Tr}\left(p_{1}\left(X_{\lambda_{1,1}},\ldots,X_{1,\lambda_{n_{1}}}\right)\right),\mathrm{Tr}\left(p_{2}\left(X_{\lambda_{2,1}},\ldots,X_{2,\lambda_{n_{2}}}\right)\right)\right)\\=\phi_{2}\left(x_{\lambda_{1,1}}\cdots x_{\lambda_{1,n_{1}}},x_{\lambda_{2,1}}\cdots x_{\lambda_{2,n_{2}}}\right)\textrm{,}
\end{multline*}
while for \(r>2\), we have
\[k_{r}\left(\mathrm{Tr}\left(p_{1}\left(X_{\lambda_{1,1}},\ldots,X_{\lambda_{1,n_{1}}}\right)\right),\ldots,\mathrm{Tr}\left(p_{r}\left(X_{\lambda_{r,1}},\ldots,X_{\lambda_{r,n_{r}}}\right)\right)\right)=0\textrm{.}\]

We say that the random matrices have a real second-order limit distribution if \(\left(A,\phi_{1},\phi_{2}\right)\) is a real second-order probability space with involution \(x\mapsto x^{t}\), and for every \(\lambda\in\Lambda\), \(-\lambda\in\Lambda\) (where, as elsewhere, \(X_{-\lambda}=X_{\lambda}^{T}\) and \(x_{-\lambda}=x_{\lambda}^{t}\)).
\end{definition}

\begin{definition}
We say that sets of random matrices are {\em asymptotically free} if they have a limit distribution, and the corresponding sets of elements of the limit distribution are free.

We say that sets of random matrices are {\em asymptotically real second-order free} if they have a real second-order limit distribution, and the corresponding sets of elements of the second-order limit distribution are real second-order free.
\end{definition}

For the standard definitions in (complex) second-order freeness, see \cite{MR2216446, MR2294222, MR2302524}.

\section{Moments of Haar-Distributed Orthogonal Matrices}
\label{section: genus expansion}

\begin{definition}
For a finite set \(I\) of signed integers we define the alternating permutations \(S_{\mathrm{alt}}\left(I\right)\) as the set of permutations \(\pi\) on \(I\) such that \(\mathrm{sgn}\left(\pi\left(k\right)\right)=-\mathrm{sgn}\left(k\right)\).  We define the alternating premaps \(PM_{\mathrm{alt}}\left(\pm I\right)=PM\left(\pm I\right)\cap S_{\mathrm{alt}}\left(\pm I\right)\).
\end{definition}

\begin{lemma}
\label{lemma: loops}
For \(I\) a subset of the positive integers, the map given by \(\left(\pi_{+},\pi_{-}\right)\mapsto \pi_{-}\delta\pi_{+}\) is a bijection from \({\cal P}_{2}\left(I\right)^{2}\) to \(PM_{\mathrm{alt}}\left(\pm I\right)\).

The blocks of \(\pi_{+}\vee\pi_{-}\) each have an even number of elements, so we may construct a Young diagram whose row-lengths are half the number of elements in the blocks.  The cycles of \(\pi_{+}\pi_{-}\) consist of pairs with the same number of elements, so we may construct a Young diagram with a row for each pair of cycles whose length is the number of elements in each of those cycles.  The cycles of \(\pi_{-}\delta\pi_{+}\) each contain an even number of elements, so we may construct a Young diagram whose row-lengths are half the number of elements in the cycles of \(\pi_{-}\delta\pi_{+}/2\).  All three Young diagrams are equal.
\end{lemma}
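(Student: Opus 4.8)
The plan is to prove the bijection and the three-fold coincidence of Young diagrams separately, reducing the latter to the case where $\pi_+\vee\pi_-$ has a single block; throughout I regard a pairing $\pi\in{\cal P}_2(I)$ as the element of $S(\pm I)$ fixing $-I$ pointwise. For the bijection I would first observe that $\beta\mapsto\beta\delta$ is a bijection from $PM_{\mathrm{alt}}(\pm I)$ onto the set of fixed-point-free sign-preserving involutions of $\pm I$: the relation $\delta\beta\delta=\beta^{-1}$ is equivalent to $(\beta\delta)^{2}=\mathrm{id}$; since $\beta$ and $\delta$ each reverse sign, $\beta\delta$ preserves it; and the condition that no cycle of $\beta$ contain both $k$ and $-k$ is exactly fixed-point-freeness of $\beta\delta$ (this is the criterion ``$\beta(k)\neq -k$'' quoted after the definition of premap, applied to $\beta=(\beta\delta)\delta$). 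A fixed-point-free sign-preserving involution of $\pm I$ is precisely a pair consisting of a pairing of $I$ and a pairing of $-I$. A direct computation from the definitions shows that $\pi_-\delta\pi_+$ maps $k\mapsto -\pi_+(k)$ and $-k\mapsto\pi_-(k)$ for $k\in I$, so that $(\pi_-\delta\pi_+)\delta$ is the sign-preserving involution equal to $\pi_-$ on $I$ and to the pairing $\delta\pi_+\delta$ on $-I$; in particular $\pi_-\delta\pi_+\in PM_{\mathrm{alt}}(\pm I)$. Since $\pi_-\mapsto\pi_-$ and $\pi_+\mapsto\delta\pi_+\delta$ are bijections ${\cal P}_2(I)\to{\cal P}_2(I)$ and ${\cal P}_2(I)\to{\cal P}_2(-I)$, composing with the bijection $\beta\leftrightarrow\beta\delta$ gives that $(\pi_+,\pi_-)\mapsto\pi_-\delta\pi_+$ is a bijection onto $PM_{\mathrm{alt}}(\pm I)$.

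The combinatorial input is the elementary fact that if $\tau_1,\tau_2$ are fixed-point-free involutions of a finite nonempty set $W$ with $\langle\tau_1,\tau_2\rangle$ transitive on $W$, then $\tau_1\tau_2$ has exactly two cycles, each of length $|W|/2$, and $\tau_1$ and $\tau_2$ each interchange them. I would prove this by noting that each $\tau_i$ conjugates $\tau_1\tau_2$ to its inverse, so $\langle\tau_1\tau_2\rangle$ is normal in $\langle\tau_1,\tau_2\rangle$ of index $1$ or $2$. Index $1$ would make $\langle\tau_1,\tau_2\rangle$ cyclic and transitive, hence a single $|W|$-cycle containing both involutions; a cyclic group has at most one involution, so $\tau_1=\tau_2$ and $\tau_1\tau_2=\mathrm{id}$, contradicting transitivity since $|W|\geq 2$. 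So the index is $2$; a transitive group permutes the orbits of a normal subgroup transitively, so $\langle\tau_1\tau_2\rangle$ has one or two orbits, the first being excluded as above, leaving exactly two; the nontrivial coset, which contains $\tau_1$ and $\tau_2$, cannot fix both orbits (else $\langle\tau_1,\tau_2\rangle$ would not be transitive), so it swaps them and they have equal size.

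Each of the three Young diagrams is assembled from the blocks $V$ of $\pi_+\vee\pi_-$: the first tautologically; the second because $\pi_+$ and $\pi_-$ each preserve every $V$, so the cycles of $\pi_+\pi_-$ split along the $V$'s; the third because $\pi_-\delta\pi_+$ preserves each $\pm V$ (visible from its relation to $\beta\delta$, i.e.\ from $k\mapsto -\pi_+(k)$, $-k\mapsto\pi_-(k)$) and the ``particular'' label is intrinsic to a cycle. It therefore suffices to assume $\langle\pi_+,\pi_-\rangle$ transitive on $I$ and show each diagram is the single row $(|I|/2)$. The block $I$ has even size, being a union of $\pi_+$-pairs; this gives the first. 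Applying the fact to $\pi_+,\pi_-$ on $I$ shows $\pi_+\pi_-$ has two cycles of length $|I|/2$, i.e.\ one pair of cycles of length $|I|/2$; this gives the second. For the third, write $\beta=\pi_-\delta\pi_+=\sigma\delta$ with $\sigma=\beta\delta$ the involution from the first paragraph; from $k\in I$ one reaches $\pi_-(k)$ via $\sigma$ and $\pi_+(k)$ via $\delta\sigma\delta$, so transitivity of $\langle\pi_+,\pi_-\rangle$ on $I$ forces transitivity of $\langle\sigma,\delta\rangle$ on $\pm I$; the fact applied to $\sigma,\delta$ then shows $\beta=\sigma\delta$ has exactly two cycles, each of length $|\pm I|/2=|I|$, interchanged by $\delta$. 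Hence $\beta/2$ is a single cycle of length $|I|$, in particular of even length, contributing the row $(|I|/2)$, and all three diagrams agree.

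The step I expect to be the main obstacle is this last one: the premap $\beta$ must be analysed not directly but through the auxiliary pairing $\beta\delta$, and one then has to notice that although $\pi_+$ and $\pi_-$ enter $\beta$ very asymmetrically, they are recombined so symmetrically inside $\langle\beta\delta,\delta\rangle$ that transitivity on $I$ upgrades to transitivity on $\pm I$ and the product-of-involutions fact applies uniformly to all three objects.
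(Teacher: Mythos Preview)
Your proof is correct in outline and takes a genuinely different route from the paper. The paper argues each of the three Young-diagram claims by direct computation: for $\pi_{+}\pi_{-}$ it shows that a block of $\pi_{+}\vee\pi_{-}$ is the union of the $\pi_{+}\pi_{-}$-cycle through $k$ and the one through $\pi_{-}(k)$, then proves these disjoint by a parity argument on alternating words in $\pi_{+},\pi_{-}$; for $\pi_{-}\delta\pi_{+}$ it simply observes that $(\pi_{-}\delta\pi_{+})^{2}=\pi_{-}\pi_{+}$ on positive elements and reads the correspondence off. Your approach instead packages all three coincidences into a single lemma about products of fixed-point-free involutions and applies it uniformly, which makes the parallelism transparent; the paper's approach is more elementary but treats the three objects by separate ad hoc computations.

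There is, however, a genuine gap in your proof of the key lemma. After establishing index $2$ you say $\langle\tau_{1}\tau_{2}\rangle$ has one or two orbits, ``the first being excluded as above''. But the index-$1$ argument used that $\tau_{1},\tau_{2}\in\langle\tau_{1}\tau_{2}\rangle$, which is precisely what fails at index $2$; nothing you have written rules out the dihedral situation where $\tau_{1}\tau_{2}$ is a single $|W|$-cycle and $\tau_{1},\tau_{2}$ lie in the nontrivial coset. This case \emph{is} excluded, but it needs its own argument. The cleanest fix also streamlines the whole lemma: show that $\tau_{1}$ cannot map any cycle $C$ of $\tau_{1}\tau_{2}$ to itself. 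If it did, coordinatise $C$ by $\mathbb{Z}/n\mathbb{Z}$ with $\tau_{1}\tau_{2}$ acting as $i\mapsto i+1$; since $\tau_{1}$ conjugates $\tau_{1}\tau_{2}$ to its inverse, $\tau_{1}(i)=a-i$ for some $a$, and then $\tau_{2}(i)=\tau_{1}(i+1)=a-1-i$. Fixed-point-freeness of $\tau_{1}$ forces $n$ even and $a$ odd, but then $a-1$ is even and $\tau_{2}$ fixes $(a-1)/2$, a contradiction. Hence $\tau_{1}$ (and symmetrically $\tau_{2}$) swaps the $\tau_{1}\tau_{2}$-cycles in pairs; the action of $\langle\tau_{1},\tau_{2}\rangle$ on the set of cycles then factors through a group of order at most $2$, so transitivity gives exactly two cycles. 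This parity obstruction is morally the same as the paper's disjointness argument for the two $\pi_{+}\pi_{-}$-cycles in a block.
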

\begin{proof}
Since neither \(\pi_{+}\) or \(\pi_{-}\) changes the sign, \(\pi_{-}\delta\pi_{+}\) is alternating.  We have \(\pi_{-}\delta\pi_{+}\left(k\right)=-\mathrm{sgn}\left(k\right)\pi_{\mathrm{sgn}\left(k\right)}\left(\left|k\right|\right)\) while its inverse has \(\pi_{+}\delta\pi_{-}\left(-k\right)=-\mathrm{sgn}\left(-k\right)\pi_{-\mathrm{sgn}\left(-k\right)}\left(\left|k\right|\right)\), and since neither \(\pi_{+}\) nor \(\pi_{-}\) have fixed points it will never take \(k\) to \(-k\), so it is a premap.  For \(k>0\), \(\pi_{-}\delta\pi_{+}\left(k\right)=-\pi_{+}\left(k\right)\) and \(\pi_{-}\delta\pi_{+}\left(-k\right)=\pi_{-}\left(k\right)\), so the map is injective.  For any alternating premap \(\rho\in PM_{\mathrm{alt}}\left(\pm I\right)\) and \(k\in I\) we let \(\pi_{+}\left(k\right)=-\rho\left(k\right)\) and \(\pi_{-}\left(k\right)=\rho\left(-k\right)\) (and we let these act trivially on \(-I\)).  Then \(\pi_{\pm}^{2}\left(k\right)=k\), and since \(\rho\left(k\right)\neq -k\), these are indeed pairings.  We calculate that \(\pi_{-}\delta\pi_{+}\left(k\right)=\rho\left(k\right)\), so the map is surjective.

Since a block of \(\pi_{+}\vee\pi_{-}\) is a union of pairs, it must have an even number of elements.

Since \(\pi_{+}\) and \(\pi_{-}\) are both self-inverse, elements of an orbit of the subgroup generated by \(\pi_{+}\) and \(\pi_{-}\) may be obtained by applying them alternatingly to an element \(k\) of that orbit.  Since \(\pi_{-}\pi_{+}\) is the inverse of \(\pi_{+}\pi_{-}\), all elements of the orbit are of the form \(\left(\pi_{+}\pi_{-}\right)^{m}\left(k\right)\) or \(\pi_{-}\left(\pi_{+}\pi_{-}\right)^{m}\left(k\right)=\left(\pi_{+}\pi_{-}\right)^{-m}\pi_{-}\left(k\right)\) for some integer \(m\).  Thus this orbit is the union of the cycle of \(\pi_{+}\pi_{-}\) containing \(k\) and its image under left multiplication by \(\pi_{-}\), which is the cycle containing \(\pi_{-}\left(k\right)\).  These cycles must be disjoint: if \(\pi_{-}\left(k\right)=\left(\pi_{+}\pi_{-}\right)^{m}\left(k\right)=\pi_{\left(-1\right)^{2m}}\cdots\pi_{\left(-1\right)^{1}}\left(k\right)\) for any \(m\), then \(\pi_{\left(-1\right)^{m}}\cdots\pi_{\left(-1\right)^{1}}\left(k\right)=\pi_{\left(-1\right)^{m+1}}\pi_{\left(-1\right)^{m}}\cdots\pi_{\left(-1\right)^{1}}\left(k\right)\), but since \(\pi_{\left(-1\right)^{m+1}}\) must not have fixed points, we have a contradiction.  Each block of \(\pi_{+}\vee\pi_{-}\) is then the disjoint union of two cycles of \(\pi_{+}\pi_{-}\) containing the same number of elements, and thus the constructed Young diagram is the same as that constructed from \(\pi_{+}\vee\pi_{-}\).

Since \(\pi_{-}\delta\pi_{+}\) is alternating, each cycle must contain an even number of cycles.  We calculate that for \(k>0\), \(\left(\pi_{-}\delta\pi_{+}\right)^{2}=\pi_{-}\pi_{+}=\left(\pi_{+}\pi_{-}\right)^{-1}\), so the positive elements of each cycle are the elements of a cycle of \(\pi_{+}\pi_{-}\), and thus the constructed Young diagram is the same as that constructed from \(\pi_{+}\pi_{-}\).
\end{proof}

We will denote the Young diagram constructed in this manner from an alternating premap \(\rho\in PM_{\mathrm{alt}}\left(\pm I\right)\) by \(\lambda\left(\rho\right)\).  For every pair of cycles in \(\rho\) of length \(n\), \(\lambda\left(\rho\right)\) has a row of length \(n/2\).

It is actually possibly to construct more correspondences between these constructions, which we will not need.  The connections between the various constructions are illustrated in the following example:

\begin{example}
\label{example: loops}

\begin{figure}
\centering
\scalebox{0.5}{\input{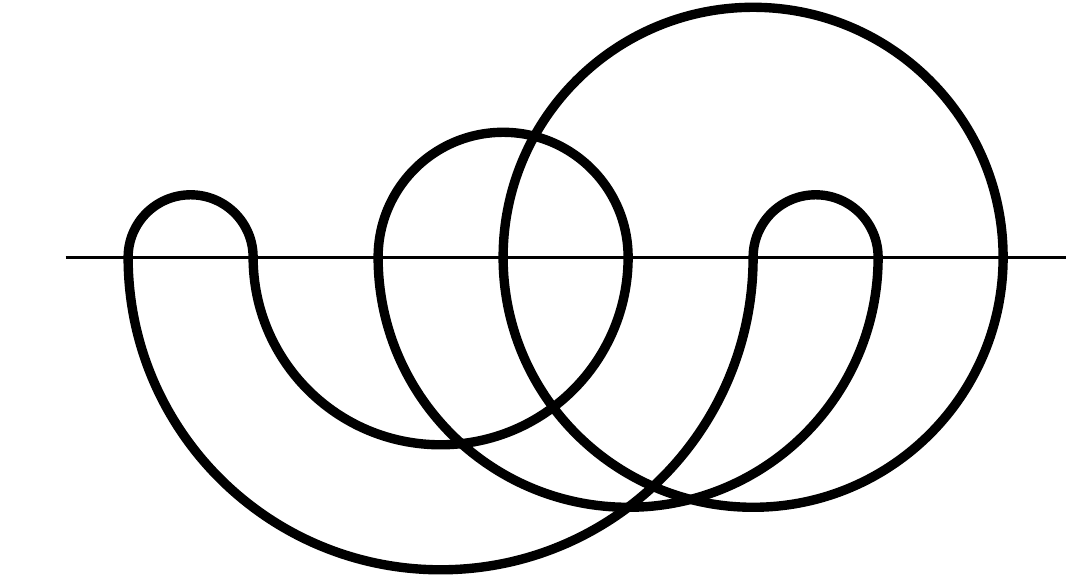_t}}
\caption{Loops formed by two pairings.}
\label{figure: loops}
\end{figure}

In Figure~\ref{figure: loops} we show pairing \(\pi_{+}=\left(1,2\right)\left(3,5\right)\left(4,8\right)\left(6,7\right)\) above a horizontal line and \(\pi_{-}=\left(1,6\right)\left(2,5\right)\left(3,7\right)\left(4,8\right)\) below it.  Their union consists of two disjoint loops.

The blocks of \(\pi_{+}\vee\pi_{-}=\left\{\left\{1,2,3,5,6\right\},\left\{4,8\right\}\right\}\) correspond to the loops in Figure~\ref{figure: loops}.  Since one block contains six points, the other two, this partition corresponds to Young diagram \(\left(3,1\right)\).

We calculate that \(\pi_{+}\pi_{-}=\left(1,7,5\right)\left(2,3,6\right)\left(4\right)\left(8\right)\).  The first two cycles each have \(3\) elements.  Their union is the set of points in the loop with \(6\) points, and each enumerates alternating points around this loop (in opposite directions).  The last two cycles each have \(1\) element, and their union is the set of points in the loop with \(2\) points.  This permutation also corresponds to Young diagram \(\left(3,1\right)\).

We also calculate that \(\pi_{-}\delta\pi_{+}=\left(1,-2,5,-3,7,-6\right)\left(6,-7,3,-5,2,-1\right)\allowbreak\left(4,-8\right)\left(8,-4\right)\).  The two cycles of length \(6\) trace out the loop with \(6\) points in Figure~\ref{figure: loops} and the two of length \(2\) trace out the loop with \(2\) points (in opposite directions).  This premap corresponds to Young diagram \(\left(3,1\right)\).
\end{example}

\begin{proposition}
\label{proposition: genus expansion}
Let \(O\) be a Haar-distributed orthogonal matrix.  Let \(\varphi\in S_{n}\), let \(\varepsilon:\left[n\right]\rightarrow\left\{1,-1\right\}\), let \(\delta_{\varepsilon}:k\mapsto\varepsilon\left(\left|k\right|\right)k\), and let \(X_{1},\ldots,X_{n}\in M_{N\times N}\left(\mathbb{C}\right)\) be random matrices independent from \(O\).  Then
\begin{align*}
&\mathbb{E}\left[\mathrm{tr}_{\varphi}\left(O^{\varepsilon\left(1\right)}X_{1},\ldots,O^{\varepsilon\left(n\right)}X_{n}\right)\right]\\
&=\sum_{\left(\pi_{+},\pi_{-}\right)\in{\cal P}_{2}\left(n\right)^{2}}N^{\chi\left(\varphi,\delta_{\varepsilon}\pi_{-}\delta\pi_{+}\delta_{\varepsilon}\right)-2\#\left(\varphi\right)}\mathrm{wg}\left(\pi_{+},\pi_{-}\right)\\&\times\mathbb{E}\left[\mathrm{tr}_{K\left(\varphi,\delta_{\varepsilon}\pi_{-}\delta\pi_{+}\delta_{\varepsilon}\right)^{-1}/2}\left(X_{1},\ldots,X_{n}\right)\right]\\
&=\sum_{\alpha\in PM_{\mathrm{alt}}\left(\pm\left[n\right]\right)}N^{\chi\left(\varphi,\delta_{\varepsilon}\alpha\delta_{\varepsilon}\right)-2\#\left(\varphi\right)}\mathrm{wg}\left(\lambda\left(\alpha\right)\right)\\&\times\mathbb{E}\left[\mathrm{tr}_{K\left(\varphi,\delta_{\varepsilon}\alpha\delta_{\varepsilon}\right)^{-1}/2}\left(X_{1},\ldots,X_{n}\right)\right]\textrm{.}
\end{align*}
\end{proposition}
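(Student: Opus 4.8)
\emph{Proof plan.} The plan is to expand the normalized trace on the left into a sum over index functions, use the independence of \(O\) from the \(X_i\) to split off the \(O\)-part of the expectation, evaluate that part with the Weingarten formula (Theorem~\ref{theorem: integral}), and then recognise what survives as a trace of the \(X_i\) along a premap assembled from \(\varphi\) and the Weingarten pairings.

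First I would write \(\mathrm{tr}_\varphi=N^{-\#\left(\varphi\right)}\mathrm{Tr}_\varphi\) and apply Lemma~\ref{lemma: traces} to the length-\(2n\) product \(O^{\varepsilon\left(1\right)}X_1\cdots O^{\varepsilon\left(n\right)}X_n\) traced along the cycles of \(\varphi\). Encoding the \(2n\) summation indices as a single function \(\iota:\pm\left[n\right]\rightarrow\left[N\right]\) — with \(\iota\left(k\right)\) the index flanking \(O^{\varepsilon\left(k\right)}\) on the side of \(X_{\varphi^{-1}\left(k\right)}\) and \(\iota\left(-k\right)\) the one flanking it on the side of \(X_k\) — one finds that the \(k\)th factor of \(O\) contributes the entry \(O_{\iota\left(\delta_\varepsilon\left(k\right)\right),\iota\left(-\delta_\varepsilon\left(k\right)\right)}\) (conjugating by \(\delta_\varepsilon\) is precisely what records whether \(O^{\varepsilon\left(k\right)}\) is \(O\) or \(O^T\)), while \(X_k\) contributes \(\left(X_k\right)_{\iota\left(-k\right),\iota\left(\varphi\left(k\right)\right)}\). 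By independence the expectation factors, and Theorem~\ref{theorem: integral} turns the \(O\)-part into \(\sum_{\left(\pi_+,\pi_-\right)\in{\cal P}_2\left(n\right)^2}\mathrm{Wg}\left(\pi_+,\pi_-\right)\) times the condition that \(\iota\circ\delta_\varepsilon\) be \(\pi_+\)-invariant and \(\iota\circ\delta_\varepsilon\circ\delta\) be \(\pi_-\)-invariant; equivalently, that \(\iota\) be constant on the orbits of a certain pairing of \(\pm\left[n\right]\) built from \(\pi_+\), \(\pi_-\) and \(\varepsilon\). (For \(n\) odd both sides vanish, as \({\cal P}_2\left(n\right)=\emptyset\).)

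The central step is then to sum the surviving \(X\)-factors over all admissible \(\iota\) and identify the result. Writing \(\alpha:=\pi_-\delta\pi_+\in PM_{\mathrm{alt}}\left(\pm\left[n\right]\right)\), I would show — by following the index identifications around the faces cut out by \(\varphi\) and matching them against the definition \(K\left(\varphi,\gamma\right)=\varphi_+^{-1}\gamma^{-1}\varphi_-\) — that \(\sum_{\iota\text{ admissible}}\prod_{k}\left(X_k\right)_{\iota\left(-k\right),\iota\left(\varphi\left(k\right)\right)}=\mathrm{Tr}_{K\left(\varphi,\delta_\varepsilon\alpha\delta_\varepsilon\right)^{-1}/2}\left(X_1,\ldots,X_n\right)\): the Weingarten pairings glue the \(O\)-edges into the hyperedges \(\delta_\varepsilon\alpha\delta_\varepsilon\), with the \(\delta_\varepsilon\)-conjugation distributing the transposes, and the \(K\)-construction produces exactly the vertices of the resulting premap, i.e.\ the free index summations, with the particular cycles carrying the matrices \(X_k\) and the negative labels their transposes. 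This bookkeeping, together with the passage to particular cycles (the ``\(/2\)''), is what I expect to be the main obstacle; it is the real analogue, through the premap machinery of \cite{2011arXiv1101.0422R, 2012arXiv1204.6211R}, of the computation for Haar unitaries in \cite{MR2294222}.

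Finally I would collect the powers of \(N\). From \(N^{-\#\left(\varphi\right)}\), from \(\mathrm{Wg}\left(\pi_+,\pi_-\right)=N^{\#\left(\pi_+\vee\pi_-\right)-n}\mathrm{wg}\left(\pi_+,\pi_-\right)\), and from \(\mathrm{Tr}_{K\left(\varphi,\delta_\varepsilon\alpha\delta_\varepsilon\right)^{-1}/2}=N^{\#\left(K\left(\varphi,\delta_\varepsilon\alpha\delta_\varepsilon\right)\right)/2}\mathrm{tr}_{K\left(\varphi,\delta_\varepsilon\alpha\delta_\varepsilon\right)^{-1}/2}\) (the number of particular cycles being half the number of cycles of the premap), the exponent of \(N\) is \(-\#\left(\varphi\right)+\#\left(\pi_+\vee\pi_-\right)-n+\#\left(K\left(\varphi,\delta_\varepsilon\alpha\delta_\varepsilon\right)\right)/2\). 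Since \(\varphi_+\) and \(\varphi_-^{-1}\) have disjoint supports, \(\#\left(\varphi_+\varphi_-^{-1}\right)=2\#\left(\varphi\right)\); and by Lemma~\ref{lemma: loops} together with conjugation-invariance of the cycle count, \(\#\left(\delta_\varepsilon\alpha\delta_\varepsilon\right)=\#\left(\alpha\right)=2\#\left(\pi_+\vee\pi_-\right)\). Substituting these into the definition of \(\chi\) shows the exponent equals \(\chi\left(\varphi,\delta_\varepsilon\alpha\delta_\varepsilon\right)-2\#\left(\varphi\right)\), which gives the first displayed equality. The second equality is then just the change of summation variable \(\left(\pi_+,\pi_-\right)\mapsto\alpha\), which is the bijection \({\cal P}_2\left(n\right)^2\cong PM_{\mathrm{alt}}\left(\pm\left[n\right]\right)\) of Lemma~\ref{lemma: loops}, together with the observation that \(\mathrm{wg}\left(\pi_+,\pi_-\right)\) depends only on the block sizes of \(\pi_+\vee\pi_-\) and hence equals \(\mathrm{wg}\left(\lambda\left(\alpha\right)\right)\).
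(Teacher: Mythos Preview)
Your plan is correct and follows essentially the same route as the paper's proof: expand the trace via Lemma~\ref{lemma: traces}, apply Theorem~\ref{theorem: integral} after factoring out the \(O\)-expectation, track the resulting index constraints to identify the \(X\)-sum as \(\mathrm{Tr}_{K\left(\varphi,\delta_\varepsilon\alpha\delta_\varepsilon\right)^{-1}/2}\), and collect powers of \(N\) into the Euler-characteristic exponent. The only cosmetic difference is your index convention (you take \(\iota\left(\pm k\right)\) to be the row/column indices of \(O^{\varepsilon\left(k\right)}\) rather than of \(O\) itself, so your \(\iota\) is the paper's \(\iota\) precomposed with \(\delta_\varepsilon\)); this is harmless, and your accounting of the \(N\)-exponent is actually spelled out more explicitly than in the paper.
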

\begin{proof}
In order to accommodate arbitrary values of \(\varepsilon\), instead of the usual \(i_{k}\) and \(j_{k}\) we will use indices \(\iota_{k}\) for all \(k\in\pm\left[n\right]\), which we interpret as a function \(\iota:\pm\left[n\right]\rightarrow\left[N\right]\).  We choose indices such that the \(k\)th occurrence of the matrix \(O\) appears with the indices \(\iota_{k}\) and \(\iota_{-k}\).  By Lemma~\ref{lemma: traces}, we have
\begin{align*}
&\mathbb{E}\left[\mathrm{tr}_{\varphi}\left(O^{\varepsilon\left(1\right)}X_{1},\ldots,O^{\varepsilon\left(n\right)}X_{n}\right)\right]\\
&=\sum_{\iota:\pm\left[n\right]\rightarrow\left[N\right]}N^{-\#\left(\varphi\right)}\mathbb{E}\left(O_{\iota_{1}\iota_{-1}}X^{\left(1\right)}_{\iota_{-\varepsilon\left(1\right)1}\iota_{\varepsilon\left(\varphi\left(1\right)\right)\varphi\left(1\right)}}\cdots O_{\iota_{n}\iota_{-n}}X^{\left(n\right)}_{\iota_{-\varepsilon\left(n\right)n}\iota_{\varepsilon\left(\varphi\left(n\right)\right)\varphi\left(n\right)}}\right)\\
&=\sum_{\iota:\pm\left[n\right]\rightarrow\left[N\right]}N^{-\#\left(\varphi\right)}\mathbb{E}\left(O_{\iota_{1}\iota_{-1}}\cdots O_{\iota_{n}\iota_{-n}}\right)\mathbb{E}\left(X^{\left(1\right)}_{\iota_{\delta\delta_{\varepsilon}\left(1\right)}\iota_{\delta_{\varepsilon}\varphi\left(1\right)}}\cdots X^{\left(n\right)}_{\iota_{\delta\delta_{\varepsilon}\left(n\right)}\iota_{\delta_{\varepsilon}\varphi\left(n\right)}}\right)\textrm{.}
\end{align*}
The expected value containing entries of the matrix \(O\) may be calculated using Theorem~\ref{theorem: integral}.  On a nonvanishing term, there are constraints on the indices \(\iota_{k}=\iota_{\pi_{+}\left(k\right)}\) and \(\iota_{-k}=\iota_{\delta\pi_{-}\delta\left(-k\right)}\) for \(k>0\).  Since \(\pi_{+}\) and \(\delta\pi_{-}\delta\) act on disjoint sets, we may combine this as \(\iota_{k}=\iota_{\delta\pi_{-}\delta\pi_{+}\left(k\right)}\) for all \(k\in\pm\left[n\right]\).  Thus our expression is:
\begin{multline*}
\sum_{\iota:\pm\left[n\right]\rightarrow\left[N\right]}\sum_{\substack{\left(\pi_{+},\pi_{-}\right)\in{\cal P}_{2}\left(n\right)^{2}\\\iota=\iota\circ\delta\pi_{-}\delta\pi_{+}}}N^{-\#\left(\varphi\right)+\#\left(\pi_{-}\delta\pi_{+}\right)/2-n}\mathrm{wg}\left(\pi_{+},\pi_{-}\right)\\\times\mathbb{E}\left(X^{\left(1\right)}_{\iota_{\delta\delta_{\varepsilon}\left(1\right)}\iota_{\delta_{\varepsilon}\varphi\left(1\right)}}\cdots X^{\left(n\right)}_{\iota_{\delta\delta_{\varepsilon}\left(n\right)}\iota_{\delta_{\varepsilon}\varphi\left(n\right)}}\right)
\end{multline*}

Reversing the order of summation, we get:
\begin{multline*}
\sum_{\left(\pi_{+},\pi_{-}\right)\in{\cal P}_{2}\left(n\right)^{2}}N^{-\#\left(\varphi\right)+\#\left(\pi_{-}\delta\pi_{+}\right)/2-n}\mathrm{wg}\left(\pi_{+},\pi_{-}\right)\\\times\sum_{\substack{\iota:\pm\left[n\right]\rightarrow\left[N\right]\\\iota=\iota\circ\delta\pi_{-}\delta\pi_{+}}}\mathbb{E}\left(X^{\left(1\right)}_{\iota_{\delta\delta_{\varepsilon}\left(1\right)}\iota_{\delta_{\varepsilon}\varphi\left(1\right)}}\cdots X^{\left(n\right)}_{\iota_{\delta\delta_{\varepsilon}\left(n\right)}\iota_{\delta_{\varepsilon}\varphi\left(n\right)}}\right)\textrm{.}
\end{multline*}
We show that the constraints on the indices are those which would appear in the trace along \(K\left(\varphi,\delta_{\varepsilon}\pi_{-}\delta\pi_{+}\delta_{\varepsilon}\right)^{-1}/2=\varphi_{-}^{-1}\delta_{\varepsilon}\pi_{-}\delta\pi_{+}\delta_{\varepsilon}\varphi_{+}/2\).

We note that for \(k>0\),
\[X^{\left(k\right)}_{\iota_{\delta\delta_{\varepsilon}\varphi_{-}\left(k\right)}\iota_{\delta_{\varepsilon}\varphi_{+}\left(k\right)}}=X^{\left(k\right)}_{\iota_{\delta\delta_{\varepsilon}\left(k\right)}\iota_{\delta_{\varepsilon}\varphi\left(k\right)}}=X^{\left(-k\right)}_{\iota_{\delta\delta_{\varepsilon}\varphi_{-}\left(-k\right)}\iota_{\delta_{\varepsilon}\varphi_{+}\left(-k\right)}}\textrm{,}\]
so the product of terms from matrices \(X_{k}\) may be expressed
\[\prod_{k\in\varphi_{-}^{-1}\delta_{\varepsilon}\pi_{-}\delta\pi_{+}\delta_{\varepsilon}\varphi_{+}/2}X^{\left(k\right)}_{\iota_{\delta\delta_{\varepsilon}\varphi_{-}\left(k\right)}\iota_{\delta_{\varepsilon}\varphi_{+}\left(k\right)}}\textrm{.}\]
The constraint \(\iota=\iota\circ\delta\pi_{-}\delta\pi_{+}\) tells us that the second index of \(X_{k}\) is equal to \(\iota_{\delta\pi_{-}\delta\pi_{+}\delta_{\varepsilon}\varphi_{+}\left(k\right)}\), the first index of \(X_{\varphi_{-}^{-1}\delta_{\varepsilon}\pi_{+}\delta\pi_{-}\delta_{\varepsilon}\varphi_{+}}\).  (Taking the constraint in the opposite direction, it is also equal to \(\iota_{\pi_{+}\delta\pi_{-}\delta\delta_{\varepsilon}\varphi_{+}\left(k\right)}\), the first index on \(X_{\varphi_{-}^{-1}\delta_{\varepsilon}\delta\pi_{+}\delta\pi_{-}\delta\delta_{\varepsilon}\varphi_{+}}\), and since \(\pi_{-}\delta\pi_{+}\) is a premap, this is the same term.  Likewise, applying the constraints to the first indices results in the same equality.  Since any subscript of \(\iota\) appears at most once, these are all of the constraints.)  Relabelling the first index of \(X_{k}\) as \(i_{k}\), and replacing the second index with the first index which it is constrained to be equal to, we find that these are exactly the constraints on the indices of
\[\mathrm{Tr}_{K\left(\varphi,\delta_{\varepsilon}\pi_{-}\delta\pi_{+}\delta_{\varepsilon}\right)^{-1}/2}\left(X_{1},\ldots,X_{n}\right)\textrm{.}\]
Expressing this in terms of the normalized trace and Weingarten function, the first expression follows, and the second follows from Lemma~\ref{lemma: loops}.
\end{proof}

By Lemma~\ref{lemma: Euler characteristic} and Theorems~\ref{theorem: unoriented noncrossing} and \ref{theorem: leading order Weingarten}, if \(X_{1},\ldots,X_{n}\) have a limit distribution, the large matrix limit of the expected value in Proposition~\ref{proposition: genus expansion} exists, and the surviving terms are those where \(\alpha\) restricts to an orientable disconnected (disc) noncrossing permutation on each cycle of \(\varphi\).  At highest order, the contribution of each hyperedge and each vertex contributes multiplicatively.

\begin{example}
\label{example: moment}

If we wish to calculate
\[\mathbb{E}\left(\mathrm{tr}\left(OX_{1}OX_{2}O^{T}X_{3}\right)\mathrm{tr}\left(OX_{4}O^{T}X_{5}O^{T}X_{6}OX_{7}OX_{8}\right)\right)\textrm{,}\]
we construct the faces shown in Figure~\ref{figure: faces}.  Indices appear around the faces in the cyclic order they appear in the traces.  If we wish to consider the term corresponding to the pairings in Example~\ref{example: loops}, we glue the edges corresponding to indices constrained to be equal (also shown in Figure~\ref{figure: faces}).  The full expression is a sum over all possible surfaces constructed by gluing pairs of positive index edges and pairs of negative index edges.  The direction of the gluing is such that the corners with the \(X_{k}\) matrices are together.

We find a number of vertices whose corners contain the matrices \(X_{k}\).  The constraints on their indices are those that Lemma~\ref{lemma: traces} gives for the trace of the product of the matrices (in clockwise order).  Expressed in terms of the normalized trace, each of these vertices contributes an order of \(N\).  The subscripts appearing around these vertices (in counterclockwise order) are given by the cycles of the permutation \(K\left(\varphi,\delta_{\varepsilon}\pi_{-}\delta\pi_{+}\delta_{\varepsilon}\right)^{-1}\), so the trace is along the cycles of \(K\left(\varphi,\delta_{\varepsilon}\pi_{-}\delta\pi_{+}\delta_{\varepsilon}\right)^{-1}/2\).  In this case, \(K\left(\varphi,\delta_{\varepsilon}\pi_{-}\delta\pi_{+}\delta_{\varepsilon}\right)^{-1}=\left(1,-3,5\right)\left(-5,3,-1\right)\left(2,7,-8,4\right)\left(-4,9,-7,-2\right)\left(6\right)\left(-6\right)\).

In addition, we find a number of vertices which contain the matrix \(O\) or its transpose.  These correspond to the loops of \(\pi_{+}\vee\pi_{-}\) (see Example~\ref{example: loops}).  By Theorem~\ref{theorem: leading order Weingarten}, each of these vertices also contributes an order of \(N\), and together a factor of \(\mathrm{wg}\left(\lambda\left(\pi_{-}\delta\pi_{+}\right)\right)\).  At highest order, each vertex contributes a factor of \(\left(-1\right)^{k/2-1}C_{k/2-1}\), where \(k\) is the degree of the vertex and \(C_{k/2-1}\) is the \(\left(k/2-1\right)\)th Catalan number.  We have \(\lambda\left(\pi_{-}\delta\pi_{+}\right)=\left[3,1\right]\), and (from \cite{MR2217291}),
\begin{multline*}
\mathrm{wg}\left(\left[3,1\right]\right)=N^{8-2}\mathrm{Wg}\left(\left[3,1\right]\right)\\=\frac{2N^{6}}{\left(N+1\right)\left(N+2\right)\left(N+6\right)\left(N-1\right)\left(N-2\right)\left(N-3\right)}
\end{multline*}
which, when \(N\rightarrow\infty\), is \(\left(-1\right)^{4-2}C_{3-1}C_{1-1}=2\).

\begin{figure}
\centering
\scalebox{0.75}{\input{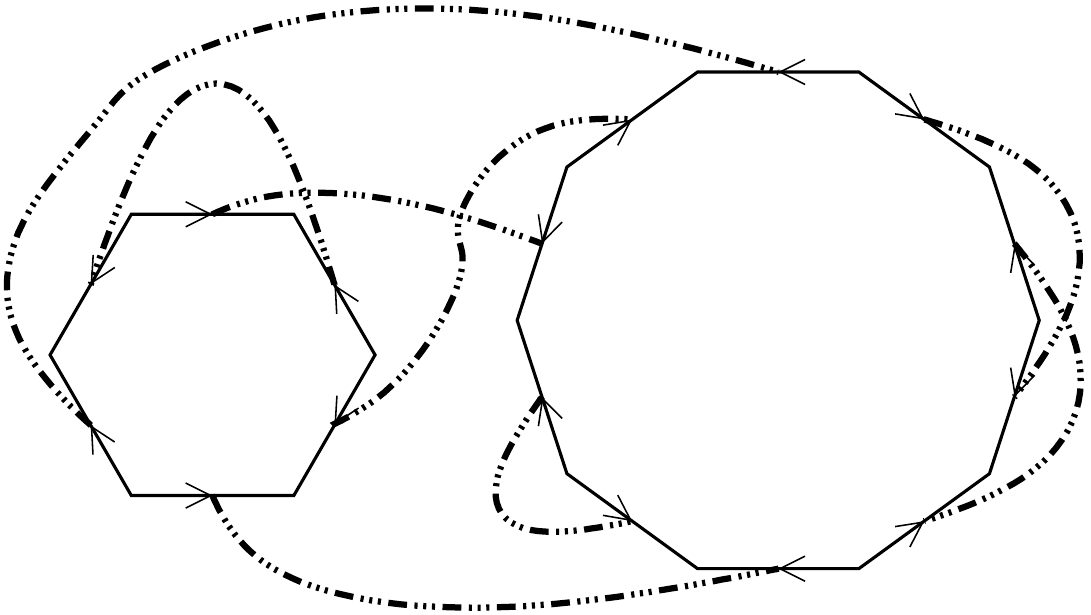_t}}
\caption{Faces constructed for the calculation in Example~\ref{example: moment}.}
\label{figure: faces}
\end{figure}

\begin{figure}
\centering
\scalebox{0.75}{\input{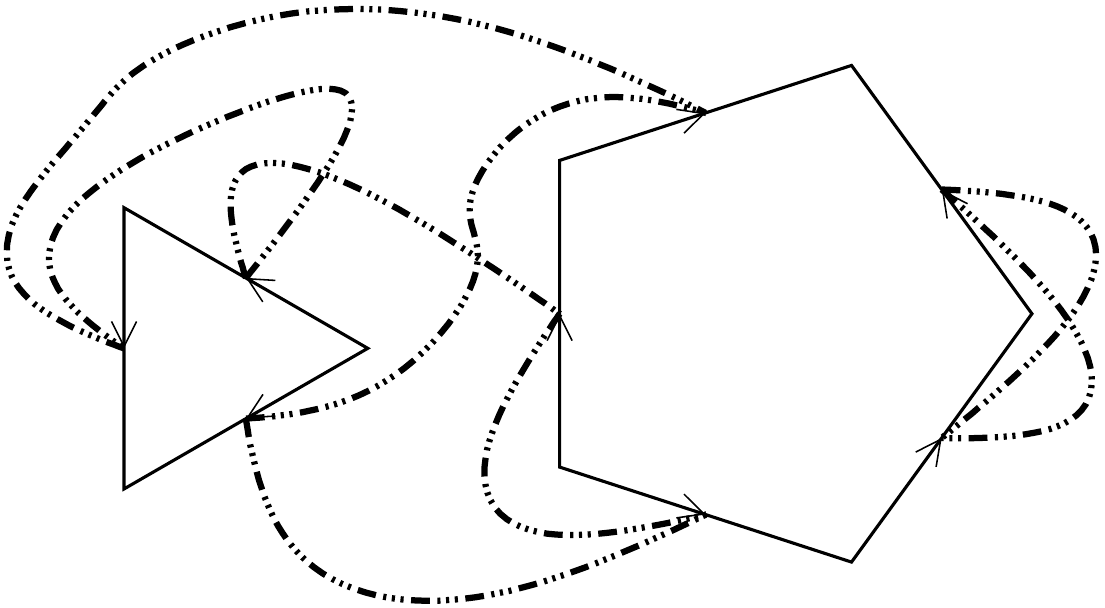_t}}
\caption{Example~\ref{example: moment} drawn with hyperedges.}
\label{figure: hyperedges}
\end{figure}

We may also interpret the hypermap \(\pi_{-}\delta\pi_{+}=\alpha\) as the set of hyperedges (edges which may have any positive integer number of ends, rather than just two, which may be visualized as vertices which alternate with the original set of vertices as in the original construction in Figure~\ref{figure: faces}; see \cite{MR0404045, MR2036721, 2012arXiv1204.6211R}).  This allows us to represent each occurrence of the matrix \(O\) or \(O^{T}\) as a single edge, and each \(X_{k}\) as a vertex.  In Figure~\ref{figure: hyperedges} we represent them as shapes touching the edges they are gluing.  Twists in the shape represent the edge being identified in a reversed direction.  This identification must be consistent with the directions marked with arrows in the diagram, which is counter-clockwise for \(O\) terms and clockwise for \(O^{T}\) terms.  The cycles of \(\alpha\) (divided by two) give the partition the Weingarten function is applied to, and the vertices again give the trace of the \(X_{k}\).

We calculate that \(\chi\left(\varphi,\delta_{\varepsilon}\alpha\delta_{\varepsilon}\right)=2+2+3-8=-1\), so the surface is a connected sum of three projective planes (cross-cap genus \(3\)).

We calculate that the contribution of this diagram to the expected value is
\[\frac{2N\mathbb{E}\left[\mathrm{tr}\left(X_{1}X_{3}^{T}X_{5}\right)\mathrm{tr}\left(X_{2}X_{7}X_{8}^{T}X_{4}\right)\mathrm{tr}\left(X_{6}\right)\right]}{\left(N+1\right)\left(N+2\right)\left(N+6\right)\left(N-1\right)\left(N-2\right)\left(N-3\right)}\textrm{.}\]
\end{example}

Proposition~\ref{proposition: genus expansion} shows that Haar-distributed orthogonal matrices satisfy a number of the hypotheses of the matrices discussed in \cite{2011arXiv1101.0422R}; specifically, for all \(\varphi\in S_{n}\), \(\varepsilon:\left[n\right]\rightarrow\left\{1,-1\right\}\), and \(X_{k}\) random matrices independent from \(O\), we have
\begin{multline*}
\mathbb{E}\left(\mathrm{tr}_{\varphi}\left(O^{\left(\varepsilon\left(1\right)\right)}X_{1},\ldots,O^{\left(\varepsilon\left(n\right)\right)}X_{n}\right)\right)\\=\sum_{\alpha\in PM_{\mathrm{alt}}\left(\pm\left[n\right]\right)}N^{\chi\left(\varphi,\delta_{\varepsilon}\alpha\delta_{\varepsilon}\right)-2\#\left(\varphi\right)}\left(\mathrm{wg}\circ\lambda\right)\left(\alpha\right)\\\times\mathbb{E}\left[\mathrm{tr}_{K\left(\varphi,\delta_{\varepsilon}\alpha\delta_{\varepsilon}\right)^{-1}/2}\left(X_{1},\ldots,X_{n}\right)\right]\textrm{;}
\end{multline*}
for each finite set of positive integers \(I\), \(PM_{\mathrm{alt}}\left(\pm I\right)\subseteq PM\left(\pm I\right)\) is a subset of the premaps on \(\pm I\) such that for any \(J\subseteq I\), the \(\alpha\in PM_{\mathrm{alt}}\left(\pm I\right)\) which do not connect \(\pm J\) and \(\pm\left(I\setminus J\right)\) are the product of a \(\alpha_{1}\in PM_{\mathrm{alt}}\left(\pm J\right)\) and \(\alpha_{2}\in PM_{\mathrm{alt}}\left(\pm\left(I\setminus J\right)\right)\); and \(\mathrm{wg}\circ\lambda:\bigcup_{I\subseteq\mathbb{N},\left|I\right|<\infty}PM_{\mathrm{alt}}\left(\pm I\right)\rightarrow\mathbb{C}\) is a function such that \(\lim_{N\rightarrow\infty}\mathrm{wg}\circ\lambda\left(\alpha\right)\) exists.  (The only hypothesis that is not satisfied is that \(\mathrm{wg}\circ\lambda\) be multiplicative in disconnected factors.)  In particular, this means (see \cite{2011arXiv1101.0422R}) that, if \(O_{1},\ldots,O_{C}\) are independent Haar-distributed orthogonal matrices and \(w:\left[n\right]\rightarrow\left[C\right]\) is a word, then
\begin{multline*}
\mathbb{E}\left[\mathrm{tr}_{\varphi}\left(O_{w\left(1\right)}^{\left(\varepsilon\left(1\right)\right)}X_{1},\ldots,O_{w\left(n\right)}^{\left(\varepsilon\left(n\right)\right)}X_{n}\right)\right]\\=\sum_{\substack{\alpha=\alpha_{1},\ldots,\alpha_{C}\\\alpha_{c}\in PM_{\mathrm{alt}}\left(\pm w^{-1}\left(c\right)\right)}}N^{\chi\left(\varphi,\delta_{\varepsilon}\alpha\delta_{\varepsilon}\right)-2\#\left(\varphi\right)}\mathrm{wg}\left(\lambda\left(\alpha_{1}\right)\right)\cdots\mathrm{wg}\left(\lambda\left(\alpha_{C}\right)\right)\\\times\mathbb{E}\left(\mathrm{tr}_{K\left(\varphi,\delta_{\varepsilon}\alpha\delta_{\varepsilon}\right)^{-1}/2}\left(X_{1},\ldots,X_{n}\right)\right)\textrm{.}
\end{multline*}
We may depict this diagrammatically by considering only gluings where each hyperedge may connect only copies of the same matrix, and the contribution of the hyperedges is calculated for each matrix \(O_{c}\) separately.  Since \(\mathrm{wg}\circ\lambda\) is not multiplicative, we can no longer express the contribution of a diagram as the product of the contributions of its connected components.

\section{Cumulants with Haar-Distributed Orthogonal Matrices}
\label{section: cumulants}

\begin{proposition}
\label{proposition: cumulants}
Let \(n_{1},\ldots,n_{r}\geq 0\) with \(n:=n_{1},\ldots,n_{r}\).  Let \(O_{1},\ldots,O_{C}\) be independent Haar-distributed orthogonal matrices, and let \(X_{1},\ldots,X_{n}\) be random matrices independent from the \(O_{c}\).  Let \(w:\left[n\right]\rightarrow\left[C\right]\), let \(\varepsilon:\left[n\right]\rightarrow\left\{1,-1\right\}\), and for \(1\leq k\leq r\), let
\[Y_{k}=\mathrm{Tr}\left(O_{w\left(n_{1}+\cdots+n_{k-1}+1\right)}^{\varepsilon\left(n_{1}+\cdots+n_{k-1}+1\right)}X_{n_{1}+\cdots+n_{k-1}+1}\cdots O_{w\left(n_{1}+\cdots+n_{k}\right)}^{\varepsilon\left(n_{1}+\cdots+n_{k}\right)}X_{n_{1}+\cdots+n_{k}}\right)\textrm{.}\]
Let \(\varphi=\left(1,\ldots,n_{1}\right)\cdots\left(n_{1}+\cdots+n_{r-1}+1,\ldots,n\right)\).  For \(\alpha=\pi_{-}\delta\pi_{+}\in PM_{\mathrm{alt}}\left(\pm\left[n\right]\right)\) let \(\pi=\pi_{+}\vee\pi_{-}\).  Let \(\sigma_{1},\ldots,\sigma_{s}\) be the particular cycles of \(K\left(\varphi,\delta_{\varepsilon}\alpha\delta_{\varepsilon}\right)\).  (We omit the indexing of \(s\) and the \(\sigma_{k}\) by \(\alpha\) to avoid clutter.)  For \(\tau\in{\cal P}\left(s\right)\), let \(\tau_{\sigma}\in{\cal P}\left(n\right)\) have a block corresponding to each block \(V\in\rho\) equal to \(\left\{\left|i\right|:i\in\sigma_{k},k\in V\right\}\).  Then:
\begin{multline*}
k_{r}\left(Y_{1},\ldots,Y_{r}\right)\\=\sum_{\substack{\alpha=\alpha_{1}\cdots\alpha_{C}\\\alpha_{c}\in PM_{\mathrm{alt}}\left(\pm w^{-1}\left(c\right)\right)}}N^{\chi\left(\varphi,\delta_{\varepsilon}\alpha\delta_{\varepsilon}\right)-r}\sum_{\substack{\rho:\pi\preceq\rho\preceq\ker\left(w\right)\\\tau\in{\cal P}\left(s\right)\\\varphi\vee\rho\vee\tau_{\sigma}=1_{n}}}C_{\pi,\pi,\rho}\\\times k_{\tau}\left(\mathrm{tr}_{\sigma_{1}^{-1}}\left(X_{1},\ldots,X_{n}\right),\ldots,\mathrm{tr}_{\sigma_{s}^{-1}}\left(X_{1},\ldots,X_{n}\right)\right)\textrm{.}
\end{multline*}

If the algebra generated by the \(X_{k}\) has a second-order limit distribution, then \(\lim_{N\rightarrow\infty}k_{2}\left(Y_{1},Y_{2}\right)\) exists, and for \(r>2\), \(\lim_{N\rightarrow\infty}k_{r}\left(Y_{1},\ldots,Y_{r}\right)=0\).
\end{proposition}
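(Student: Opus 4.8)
The plan is to substitute the exact formula for $k_{r}(Y_{1},\ldots,Y_{r})$ established in the first part of the statement into an order-of-$N$ count and show every term is $O(N^{2-r})$, which is $o(1)$ for $r>2$ and for $r=2$ leaves a finite sum of bounded, convergent terms. First I would record the exponent carried by each factor of a term indexed by $(\alpha,\rho,\tau)$. Since $\varphi_{+}\varphi_{-}^{-1}$ has $2\#(\varphi)=2r$ cycles, $\alpha$ has $2\#(\pi)$ cycles (Lemma~\ref{lemma: loops}), and $K(\varphi,\delta_{\varepsilon}\alpha\delta_{\varepsilon})$ has $2s$ cycles, the definition of $\chi$ gives $\chi(\varphi,\delta_{\varepsilon}\alpha\delta_{\varepsilon})=r+\#(\pi)+s-n$. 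By Theorem~\ref{theorem: Weingarten cumulant}, $C_{\pi,\pi,\rho}$ is of order $N^{2(\#(\rho)-\#(\pi))}$, and since each $\mathrm{tr}_{\sigma_{k}^{-1}}$ carries one extra factor $N^{-1}$ relative to $\mathrm{Tr}_{\sigma_{k}^{-1}}$, the standard estimate $k_{m}(\mathrm{Tr}(p_{1}),\ldots,\mathrm{Tr}(p_{m}))=O(N^{2-m})$ valid for a second-order limit distribution gives $k_{\tau}(\mathrm{tr}_{\sigma_{1}^{-1}}(X),\ldots,\mathrm{tr}_{\sigma_{s}^{-1}}(X))=O(N^{2t-2s})$, where $t=\#(\tau)$. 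Multiplying by the explicit $N^{\chi-r}$, each term is $O(N^{e})$ with $e=2\#(\rho)-\#(\pi)-s+2t-n$.

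The crux is the inequality $e\le 2-r$. Write $b_{0}=\#(\varphi\vee\pi)$; this is the number of connected components of the diagram built from $\varphi$ and $\alpha$, so Lemma~\ref{lemma: Euler characteristic} (additivity over components, each with $\chi\le2$) gives $\chi(\varphi,\delta_{\varepsilon}\alpha\delta_{\varepsilon})\le 2b_{0}$. I would bound $b_{0}$ with two applications of Lemma~\ref{lemma: useful}. Applied to $\pi\preceq\rho$ with the partition $\varphi$ it gives $\#(\varphi\vee\rho)\ge\#(\varphi\vee\pi)-(\#(\pi)-\#(\rho))$. Writing $\sigma_{\bullet}$ for the partition of $[n]$ with blocks $\{|i|:i\in\sigma_{k}\}$, $k=1,\ldots,s$, which satisfies $\sigma_{\bullet}\preceq\tau_{\sigma}$ by construction and $\sigma_{\bullet}\preceq\varphi\vee\pi\preceq\varphi\vee\rho$ because $K(\varphi,\delta_{\varepsilon}\alpha\delta_{\varepsilon})\preceq\{\pm V_{1},\ldots,\pm V_{r}\}\vee\delta_{\varepsilon}\alpha\delta_{\varepsilon}$ (Lemma~\ref{lemma: Euler characteristic}), applying Lemma~\ref{lemma: useful} to $\sigma_{\bullet}\preceq\tau_{\sigma}$ with the partition $\varphi\vee\rho$ and using the summation constraint $\varphi\vee\rho\vee\tau_{\sigma}=1_{n}$ together with $\#(\sigma_{\bullet})=s$, $\#(\tau_{\sigma})=t$ gives $\#(\varphi\vee\rho)\le s-t+1$. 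Combining, $b_{0}\le 1+(\#(\pi)-\#(\rho))+(s-t)$; feeding this into $r+\#(\pi)+s-n=\chi\le 2b_{0}$ and rearranging produces exactly $e\le 2-r$.

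For $r>2$ this shows every term is $o(1)$, so $k_{r}(Y_{1},\ldots,Y_{r})\to 0$. For $r=2$ every term is $O(1)$; writing a term as $N^{\chi-r}$ times the normalized Weingarten cumulant $N^{-2(\#(\rho)-\#(\pi))}C_{\pi,\pi,\rho}$ times the normalized $X$-trace cumulant $N^{2s-2t}k_{\tau}(\mathrm{tr}_{\sigma_{1}^{-1}}(X),\ldots,\mathrm{tr}_{\sigma_{s}^{-1}}(X))$, both normalized factors converge as $N\to\infty$ — the first by Theorem~\ref{theorem: leading order Weingarten} and the Weingarten asymptotics, the second because each of its block factors converges to a product of $\phi_{1}$- and $\phi_{2}$-values (and to $0$ when a block has size $\ge 3$) by the second-order limit distribution hypothesis. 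Since $e\le 0$, terms with $e<0$ vanish and terms with $e=0$ converge; as there are finitely many terms, $\lim_{N\to\infty}k_{2}(Y_{1},Y_{2})$ exists.

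I expect the main obstacle to be the bookkeeping in the middle step: identifying $b_{0}=\#(\varphi\vee\pi)$ with the component count so that $\chi\le 2b_{0}$ is legitimate via the additivity clause of Lemma~\ref{lemma: Euler characteristic}, and threading the two applications of Lemma~\ref{lemma: useful} through $\sigma_{\bullet}\preceq\tau_{\sigma}$ and $\sigma_{\bullet}\preceq\varphi\vee\pi$ so that the connectivity constraint $\varphi\vee\rho\vee\tau_{\sigma}=1_{n}$ is exactly what forces $e\le 2-r$. The one external input that must be handled with care is the estimate $k_{m}(\mathrm{Tr}(p_{1}),\ldots,\mathrm{Tr}(p_{m}))=O(N^{2-m})$ for the $X_{k}$ — the quantitative content of a second-order limit distribution — which is what is needed, rather than merely $k_{m}(\mathrm{Tr})\to 0$, to control terms in which $\tau$ has a block of size $\ge 3$.
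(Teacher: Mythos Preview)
Your proposal addresses only the asymptotic half of the proposition; the exact formula for \(k_{r}(Y_{1},\ldots,Y_{r})\) is part of the statement and requires its own argument (M\"obius inversion over \(\upsilon\succeq\varphi\), expansion of each \(a_{\upsilon}\) via Proposition~\ref{proposition: genus expansion}, expansion of the Weingarten factors and the expectation into Weingarten cumulants and classical cumulants, and then summation over \(\upsilon\) to produce the constraint \(\varphi\vee\rho\vee\tau_{\sigma}=1_{n}\)). You should at least sketch this.

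On the asymptotic part, your chain of inequalities is correct and slick, but it rests on an input you do not have. You yourself flag it: you need \(k_{m}\bigl(\mathrm{Tr}(p_{1}),\ldots,\mathrm{Tr}(p_{m})\bigr)=O(N^{2-m})\) for \(m\geq 3\). In this paper's definition of a second-order limit distribution, the hypothesis is only that such higher cumulants of unnormalized traces tend to \(0\); no rate is assumed. With that alone your bound \(k_{\tau}(\mathrm{tr}_{\sigma_{1}^{-1}},\ldots,\mathrm{tr}_{\sigma_{s}^{-1}})=O(N^{2t-2s})\) fails precisely when \(\tau\) has a block of size at least \(3\): a block of size \(m\geq 3\) contributes \(N^{-m}\cdot o(1)=o(N^{-m})\), not the \(O(N^{2-2m})\) your exponent count requires. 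So the single inequality \(e\le 2-r\) does not follow from the stated hypotheses.

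The paper's proof avoids this by using the weaker but legitimate estimate
\[
k_{\tau}\bigl(\mathrm{tr}_{\sigma_{1}^{-1}}(X),\ldots,\mathrm{tr}_{\sigma_{s}^{-1}}(X)\bigr)=O\bigl(N^{-\left|\{k:\{k\}\notin\tau\}\right|}\bigr),
\]
which follows from \(k_{1}(\mathrm{tr})=O(1)\), \(k_{2}(\mathrm{tr},\mathrm{tr})=O(N^{-2})\), and \(k_{m}(\mathrm{tr},\ldots,\mathrm{tr})=o(N^{-m})\) for \(m\geq 3\). Combined with \(\chi\le 2\#(\varphi\vee\pi)\), Lemma~\ref{lemma: useful}, and the observation that each block of \(\varphi\vee\pi\vee\rho\) must contain a \(k\) not a singleton of \(\tau\) (from the connectivity constraint), this yields only \(O(1)\). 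The paper then handles \(r>2\) by a short case analysis: either \(\varphi\vee\pi\vee\rho=1_{n}\) so \(\chi\le 2\) and the exponent is already \(\le 2-r<0\); or the inequality coming from Lemma~\ref{lemma: useful} is strict; or \(\tau\) must have either two non-singletons in one block of \(\varphi\vee\pi\vee\rho\) (strict inequality) or a block of size \(\ge 3\) (term is \(o(1)\) by hypothesis). You can repair your argument by restricting your clean inequality \(e\le 2-r\) to \(\tau\in{\cal P}_{1,2}(s)\) (where your estimate and the paper's coincide) and treating blocks of size \(\ge 3\) separately as vanishing.
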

\begin{proof}
The cumulant may be written
\begin{multline*}
\sum_{\substack{\upsilon\in{\cal P}\left(n\right)\\\upsilon\succeq\varphi}}\mu\left(\upsilon,1_{n}\right)a_{\upsilon}\left(Y_{1},\ldots,Y_{r}\right)\\=\sum_{\substack{\upsilon\in{\cal P}\left(n\right)\\\upsilon\succeq\varphi}}\mu\left(\upsilon,1_{n}\right)\sum_{\substack{\alpha=\alpha_{1}\cdots\alpha_{C}\\\alpha_{c}\in PM_{\mathrm{alt}}\left(\pm w^{-1}\left(c\right)\right)\\\pi\preceq\upsilon}}N^{\chi\left(\varphi,\delta_{\varepsilon}\alpha\delta_{\varepsilon}\right)-r}\\\times\prod_{V\in\tau}\mathrm{wg}\left(\lambda\left(\left.\alpha_{1}\right|_{\pm V}\right)\right)\cdots\mathrm{wg}\left(\lambda\left(\left.\alpha_{C}\right|_{\pm V}\right)\right)\\\times\mathbb{E}\left[\mathrm{tr}_{\left.K\left(\varphi,\delta_{\varepsilon}\alpha\delta_{\varepsilon}\right)^{-1}/2\right|_{\pm V}}\left(X_{1},\ldots,X_{n}\right)\right]\textrm{.}
\end{multline*}
We may then expand the Weingarten functions and the expected value in terms of their cumulants.  The innermost product becomes:
\begin{multline*}
\left[\sum_{\rho:\pi\preceq\rho\preceq\upsilon\wedge\ker\left(w\right)}C_{\pi,\pi,\rho}\right]\\\times\left[\sum_{\substack{\tau\in{\cal P}\left(s\right)\\\tau_{\sigma}\preceq\upsilon}}k_{\tau}\left(\mathrm{tr}_{\sigma_{1}^{-1}}\left(X_{1},\ldots,X_{n}\right),\ldots,\mathrm{tr}_{\sigma^{-1}_{s}}\left(X_{1},\ldots,X_{n}\right)\right)\right]
\\=\sum_{\substack{\rho:\pi\preceq\rho\preceq\tau\wedge\ker\left(w\right)\\\tau:\tau_{\sigma}\preceq\upsilon}}C_{\pi,\pi,\rho}k_{\tau}\left(\mathrm{tr}_{\sigma^{-1}_{1}}\left(X_{1},\ldots,X_{n}\right),\ldots,\mathrm{tr}_{\sigma^{-1}_{s}}\left(X_{1},\ldots,X_{n}\right)\right)\textrm{.}
\end{multline*}
Permuting the order of summation
\begin{multline*}
\sum_{\alpha\in PM_{\mathrm{alt}}\left(\pm\left[n\right]\right)}N^{\chi\left(\varphi,\delta_{\varepsilon}\alpha\delta_{\varepsilon}\right)-r}\sum_{\substack{\rho:\pi\preceq\rho\preceq\ker\left(w\right)\\\tau\in{\cal P}\left(s\right)}}C_{\pi,\pi,\rho}\\\times k_{\tau}\left(\mathrm{tr}_{\sigma_{1}^{-1}}\left(X_{1},\ldots,X_{n}\right),\ldots,\mathrm{tr}_{\sigma^{-1}_{s}}\left(X_{1},\ldots,X_{n}\right)\right)\sum_{\substack{\upsilon\in{\cal P}\left(n\right)\\\upsilon\succeq\varphi\vee\rho\vee\tau_{\sigma}}}\mu\left(\upsilon,1_{n}\right)\textrm{.}
\end{multline*}
The innermost sum is equal to \(1\) when the lower bound \(\varphi\vee\rho\vee\tau_{\sigma}=1_{n}\), and \(0\) otherwise.  The first part of the theorem follows.

We know that \(C_{\pi,\pi,\rho}=O\left(N^{2\left(\#\left(\rho\right)-\#\left(\pi\right)\right)}\right)\) and
\[k_{\tau}\left(\mathrm{tr}_{\sigma_{1}^{-1}}\left(X_{1},\ldots,X_{n}\right),\ldots,\mathrm{tr}_{\sigma^{-1}_{s}}\left(X_{1},\ldots,X_{n}\right)\right)=O\left(N^{-\left|\left\{k:\left\{k\right\}\notin\tau\right\}\right|}\right)\textrm{,}\]
and vanishes asymptotically if \(\tau\) contains a block of size greater than \(2\).  The full cumulant is then \(O\left(N^{\chi\left(\varphi,\delta_{\varepsilon}\alpha\delta_{\varepsilon}\right)-r+2\left(\#\left(\rho\right)-\#\left(\pi\right)\right)-\#\left\{k:\left\{k\right\}\notin\tau\right\}}\right)\).  We know that \(\chi\left(\varphi,\delta_{\varepsilon}\alpha\delta_{\varepsilon}\right)\leq 2\#\left(\varphi\vee\pi\right)\), so
\begin{multline*}
\chi\left(\varphi,\delta_{\varepsilon}\alpha\delta_{\varepsilon}\right)-r+2\left(\#\left(\rho\right)-\#\left(\pi\right)\right)-\left|\left\{k:\left\{k\right\}\notin\tau\right\}\right|\\\leq 2\#\left(\varphi\vee\pi\right)-r+2\left(\#\left(\rho\right)-\#\left(\pi\right)\right)-\left|\left\{k:\left\{k\right\}\notin\tau\right\}\right|\textrm{.}
\end{multline*}
By Lemma~\ref{lemma: useful}, \(\#\left(\pi\right)-\#\left(\rho\right)\geq\#\left(\varphi\vee\pi\right)-\#\left(\varphi\vee\pi\vee\rho\right)\).  Thus
\begin{multline}
\chi\left(\varphi,\delta_{\varepsilon}\alpha\delta_{\varepsilon}\right)-r+2\left(\#\left(\rho\right)-\#\left(\pi\right)\right)-\left|\left\{k:\left\{k\right\}\notin\tau\right\}\right|\\\leq 2\#\left(\varphi\vee\pi\vee\rho\right)-r-\left|\left\{k:\left\{k\right\}\notin\sigma\right\}\right|\textrm{.}
\label{formula: cumulant inequality}
\end{multline}
Since \(\tau_{\sigma}\) must connect the blocks of \(\varphi\vee\pi\vee\rho\), \(\tau\) must have a \(k\) not in a singleton in each block of \(\varphi\vee\pi\vee\rho\), or
\begin{multline}
\chi\left(\varphi,\delta_{\varepsilon}\alpha\delta_{\varepsilon}\right)-r+2\left(\#\left(\rho\right)-\#\left(\pi\right)\right)-\left|\left\{k:\left\{k\right\}\notin\tau\right\}\right|\\\leq\#\left(\varphi\vee\pi\vee\rho\right)-r\textrm{.}
\label{formula: Weingarten cumulant inequality}
\end{multline}
Since \(r=\#\left(\varphi\right)\), we have that the cumulant is \(O\left(1\right)\).

Furthermore, if \(r>2\), then either \(\varphi\vee\pi=1_{n}\); or \(\#\left(\varphi\vee\pi\vee\rho\right)>\#\left(\varphi\vee\pi\right)\geq r\), in which case we can get a strict inequality from (\ref{formula: Weingarten cumulant inequality}); or \(\tau_{\sigma}\) must connect more than \(2\) blocks, in which case it either has at least two \(k\) not in singletons in one block of \(\varphi\vee\pi\vee\rho\), or a block of size greater than \(2\).  In the first case \(\chi\left(\varphi,\delta_{\varepsilon}\alpha\delta_{\varepsilon}\right)-r<0\).  In the second, the inequality we get from (\ref{formula: cumulant inequality}) is strict.  In the third, if we have more than on \(k\) not in a singleton then the inequality (\ref{formula: Weingarten cumulant inequality}) is strict, and if \(\sigma\) has a block of size larger than \(2\), the cumulant over the block of size greater than \(2\), and hence the product, vanishes.
\end{proof}

\section{Matrices Orthogonally in General Position}
\label{section: freeness}

We present two technical lemmas which will be useful in the following proofs.  Lemma~\ref{lemma: neighbours, disc} shows that any disc-noncrossing permutation with no singletons must have a cycle which contains a pair of neighbours: since every cycle must contain more than one element, we can find a sequence of nested cycles until we find one containing the pair of neighbours.  (In fact we can find two, since we may always find two distinct notions of ``outside'', and hence two directions of nesting.)  Lemma~\ref{lemma: neighbours, annulus} shows that if an annular-noncrossing permutation with no singletons has a cycle with more than one element in one disc of the annulus, then it must have a cycle with a pair of neighbours: elements on one side of this permutation may be connected to the other cycle of \(\varphi\), but we may similarly find a sequence of nested permutations on the other side until we find a pair of neighbours.  This means that in alternating products of centred matrices orthogonally in general position, highest order terms must correspond to an \(\alpha\) connecting a pair of neighbours under these conditions.  Roughly, it may not connect a \(2n\) to \(\varphi\left(2n\right)\), since terms are alternating, and if it connects a \(2n-1\) to \(\varphi\left(2n-1\right)\), the centred matrix between them appears alone in a cycle of \(K\left(\varphi,\delta^{\prime}\alpha\delta^{\prime}\right)^{-1}\), allowing us to show that the term must vanish.  In the first-order case, all terms vanish, and in the second-order case, we are left with the spoke diagrams, as we expected.

\begin{lemma}
\label{lemma: neighbours, disc}
Let \(\varphi\in S\left(n\right)\) be a permutation with one cycle.  If \(\alpha\in S_{\mathrm{disc-nc}}\left(\varphi\right)\) has no cycles with only one element, then there are at least two distinct \(k\in\left[n\right]\) such that \(\alpha^{-1}\left(k\right)=\varphi\left(k\right)\).
\end{lemma}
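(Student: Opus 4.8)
The plan is to reduce the statement to a bare cycle count, using Biane's theorem to convert the disc-noncrossing hypothesis into the identity $\#(\alpha)+\#(\varphi^{-1}\alpha^{-1})=n+1$. The first step is to rewrite the condition to be verified: set $\beta:=\varphi^{-1}\alpha^{-1}$, and note that $\beta(k)=\varphi^{-1}(\alpha^{-1}(k))$, so $\beta(k)=k$ if and only if $\alpha^{-1}(k)=\varphi(k)$. Thus the indices $k$ we are looking for are exactly the fixed points of $\beta$, and the lemma is equivalent to the assertion that $\beta$ has at least two fixed points. Since $\varphi$ has a single cycle and $\alpha\in S_{\mathrm{disc-nc}}(\varphi)$, Biane's theorem gives $\#(\alpha)+\#(\beta)=n+1$.

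The second step is a counting argument. Let $f$ be the number of fixed points (singleton cycles) of $\beta$. The other $\#(\beta)-f$ cycles of $\beta$ each contain at least two elements and together they account for the $n-f$ non-fixed points, so $n-f\ge 2(\#(\beta)-f)$, i.e. $f\ge 2\#(\beta)-n$. On the other hand $\alpha$ has no singleton cycles, so each of its $\#(\alpha)$ cycles has at least two elements and $\#(\alpha)\le n/2$; combining this with $\#(\beta)=n+1-\#(\alpha)$ yields $\#(\beta)\ge n/2+1$, hence $f\ge 2(n/2+1)-n=2$. Since $f$ is a nonnegative integer this is the desired bound, and it identifies the at least two distinct $k\in[n]$ with $\alpha^{-1}(k)=\varphi(k)$.

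I expect no real obstacle here beyond bookkeeping: one must invoke Biane's theorem in the correct direction (membership in $S_{\mathrm{disc-nc}}(\varphi)$ forcing the count $\#(\alpha)+\#(\varphi^{-1}\alpha^{-1})=n+1$, which already incorporates both the standardness and noncrossing conditions) and keep the composition order and inverses straight so that the fixed points of $\varphi^{-1}\alpha^{-1}$ are genuinely the indices demanded by the statement. The geometric picture alluded to before the lemma — chasing a descending chain of nested cycles of $\alpha$ until one of them is squeezed against a pair of $\varphi$-neighbours, carried out once for each of the two cyclic "sides" of the disc — proves the same bound but is less efficient; the count above in fact gives the sharper estimate $\#\bigl\{k:\alpha^{-1}(k)=\varphi(k)\bigr\}\ge 2\bigl(n+1-\#(\alpha)\bigr)-n$.
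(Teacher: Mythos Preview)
Your proof is correct and takes a genuinely different route from the paper's. You convert the problem to a cycle count via Biane's theorem: the indices sought are exactly the fixed points of $\beta=\varphi^{-1}\alpha^{-1}$, and the identity $\#(\alpha)+\#(\beta)=n+1$ together with the pigeonhole bounds $\#(\alpha)\le n/2$ (no singletons in $\alpha$) and $n-f\ge 2(\#(\beta)-f)$ forces $f\ge 2$. The paper instead argues directly from the noncrossing and standardness conditions: starting at any $a$ with $b=\alpha^{-1}(a)\ne\varphi(a)$, it shows that $c=\varphi(a)$ lies in a distinct $\alpha$-cycle whose $\varphi$-span is strictly smaller, and iterates this nesting until a neighbour pair is found; repeating from the other side of the initial cycle yields the second $k$.

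Your argument is shorter, yields the sharper inequality $f\ge 2\#(\beta)-n$, and uses Biane's theorem as a black box. The paper's geometric descent avoids invoking Biane and, more importantly, is the template reused in the annular Lemma~\ref{lemma: neighbours, annulus}, whose hypothesis (two points sharing a cycle of both $\varphi$ and $\alpha$) is not a global ``no singletons'' condition and does not reduce as cleanly to a cycle-count inequality; there the nesting argument is what carries over.
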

\begin{proof}
For any \(a\), \(b:=\alpha^{-1}\left(a\right)\neq a\).  If \(\alpha^{-1}\left(a\right)\neq\varphi\left(a\right)\), then let \(c=\varphi\left(a\right)\).  Point \(c\) must be in a distinct cycle of \(\pi\) from \(a\) and \(b\): if not, then \(\left.\varphi\right|_{\left\{a,b,c\right\}}=\left.\alpha\right|_{\left\{a,b,c\right\}}=\left(a,c,b\right)\), so \(\alpha\) would be disc nonstandard.  Let \(d=\alpha^{-1}\left(c\right)\neq c\).  Since \(\left.\alpha\right|_{\left\{a,b,c,d\right\}}=\left(a,b\right)\left(c,d\right)\), we must have \(\left.\varphi\right|_{\left\{a,b,c,d\right\}}=\left(a,c,d,b\right)\) (disc-noncrossing condition).  So if \(m_{1}\) and \(m_{2}\) are the smallest positive integers such that \(\varphi^{m_{1}}\left(a\right)=b\) and \(\varphi^{m_{2}}\left(c\right)=d\), then \(m_{2}<m_{1}\).  By induction, we can repeat this process and eventually find a \(k\) such that \(\alpha^{-1}\left(k\right)=\varphi\left(k\right)\).

If we now let \(a^{\prime}=b\) and \(b^{\prime}=\alpha^{-1}\left(a^{\prime}\right)\), then we may find a \(k^{\prime}\) such that \(\alpha^{-1}\left(k^{\prime}\right)=\varphi\left(k^{\prime}\right)\).  Since \(\left.\varphi\right|_{\left\{a,b,k,k^{\prime}\right\}}=\left(a,k,b,k^{\prime}\right)\), we know that \(k\neq k^{\prime}\).
\end{proof}

\begin{lemma}
\label{lemma: neighbours, annulus}
Let \(\varphi\in S\left(n\right)\) be a permutation with two cycles.  If \(\alpha\in S_{\mathrm{ann-nc}}\left(\varphi\right)\), then if there are any two points sharing both a cycle of \(\varphi\) and one of \(\alpha\), we can find a \(k\) such that \(\alpha^{-1}\left(k\right)=\varphi\left(k\right)\).
\end{lemma}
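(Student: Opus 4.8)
The plan is to adapt the nesting argument from the proof of Lemma~\ref{lemma: neighbours, disc}, building ever more tightly nested $\alpha$-chords until one joins two $\varphi$-neighbours; the new feature is that the nesting must be kept inside a single ``disc'' of the annulus and prevented from escaping to the other cycle of $\varphi$. Write $\varphi_{\mathrm{ext}},\varphi_{\mathrm{int}}$ for the two cycles of $\varphi$, and assume (as is implicit here, and needed) that $\alpha$ has no singleton cycles.

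First I reduce to a chord lying inside one cycle of $\varphi$. Say the two given points $p,q$ lie in $\varphi_{\mathrm{ext}}$, with common $\alpha$-cycle $C$. If no $\alpha$-consecutive pair in $C$ had both members in $\varphi_{\mathrm{ext}}$, then each of $p,q$ would have both $\alpha$-neighbours in $\varphi_{\mathrm{int}}$, and the four points $p,q,\alpha^{-1}(p),\alpha^{-1}(q)$ would exhibit the second annular-nonstandard configuration relative to $\varphi$, contradicting $\alpha\in S_{\mathrm{ann-nc}}(\varphi)$. So $C$ contains an $\alpha$-edge $\{a,b\}$, $b=\alpha^{-1}(a)$, with $a,b\in\varphi_{\mathrm{ext}}$. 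This chord cuts the annulus into a disc $D$ avoiding $\varphi_{\mathrm{int}}$ and a complementary region; using the symmetry $(\varphi,\alpha)\leftrightarrow(\varphi^{-1},\alpha^{-1})$ (which preserves $S_{\mathrm{ann-nc}}$ and under which the conclusion is equivalent), I may assume the boundary arc of $D$ is $a,\varphi(a),\dots,b$.

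Now run the nesting. Put $c:=\varphi(a)$; if $c=b$ we are done with $k=a$, so assume $c\neq b$ and set $d:=\alpha^{-1}(c)\neq c$. The key claim is that $d$ again lies on the arc $c,\varphi(c),\dots,b$. Granting it, $\left.\varphi\right|_{\{a,b,c,d\}}=(a,c,d,b)$, so $\{c,d\}$ is an $\alpha$-chord joining two points strictly $\varphi$-closer than $a$ and $b$ are, and we may repeat with $\{c,d\}$ in place of $\{a,b\}$; the forward $\varphi$-distance strictly decreases at each step, so after finitely many steps it equals $1$, giving $k$ with $\alpha^{-1}(k)=\varphi(k)$.

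The remaining content --- and the main obstacle --- is the key claim. Supposing $c\neq b$ and $d=\alpha^{-1}(c)$ is not on the arc from $c$ to $b$, I show $\alpha\notin S_{\mathrm{ann-nc}}(\varphi)$: if $d=a$, or if $d$ lies on the complementary $\varphi_{\mathrm{ext}}$-arc inside the $\alpha$-cycle of $a,b$, then $\left.\alpha\right|_{\{a,b,c\}}=(a,c,b)=\left.\varphi\right|_{\{a,b,c\}}$, which is annular-nonstandard; if $d$ lies on that complementary arc in a different $\alpha$-cycle, then $\left.\varphi\right|_{\{a,b,c,d\}}=(a,c,b,d)$ while $\left.\alpha\right|_{\{a,b,c,d\}}=(a,b)(c,d)$, which after relabelling is the first annular-crossing configuration; and if $d\in\varphi_{\mathrm{int}}$ while $a,b,c,d$ share an $\alpha$-cycle, the same three-point computation again gives an annular-nonstandard configuration. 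The genuinely delicate case is $d\in\varphi_{\mathrm{int}}$ with $c,d$ in a new $\alpha$-cycle: the four-point data $\left.\varphi\right|=(a,c,b)(d)$, $\left.\alpha\right|=(a,b)(c,d)$ is not forbidden on its own --- this is precisely where the annular rather than disc structure is felt --- and one must use that $\alpha$ connects $\varphi_{\mathrm{ext}}$ to $\varphi_{\mathrm{int}}$ to produce witness points $x\in\varphi_{\mathrm{ext}}$, $y\in\varphi_{\mathrm{int}}$ for which the enlarged configuration matches one of the $\lambda_{x,y}$-based annular-crossing conditions, again contradicting $\alpha\in S_{\mathrm{ann-nc}}(\varphi)$. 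Selecting those witnesses and checking the resulting five- or six-point pattern against the definition of annular crossing is where essentially all the remaining work lies.
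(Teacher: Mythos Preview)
Your proof has a genuine gap, and the strategy you outline for closing it cannot work. The ``key claim'' --- that $d=\alpha^{-1}(c)$ must land back on the arc from $c$ to $b$ --- is simply false for annular-noncrossing $\alpha$. Take $\varphi=(1,2,3,4)(5,6)$ and $\alpha=(1,4)(2,5)(3,6)$; one checks directly against the definitions that $\alpha\in S_{\mathrm{ann-nc}}(\varphi)$. With $a=1$, $b=\alpha^{-1}(a)=4$, $c=\varphi(a)=2$ you get $d=\alpha^{-1}(c)=5\in\varphi_{\mathrm{int}}$, in a different $\alpha$-cycle from $a,b$: precisely your ``delicate case'', and there is no contradiction to extract. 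Your symmetry reduction does not rescue this, because deciding which side of the chord is the disc $D$ ``avoiding $\varphi_{\mathrm{int}}$'' already requires knowing where the $\alpha$-cycles go --- exactly what the key claim was supposed to establish.

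The paper's argument supplies the missing idea. Rather than trying to \emph{exclude} the escape to $\varphi_{\mathrm{int}}$, it \emph{forces} it in a controlled place and then works on the complementary side. Among all $\alpha$-consecutive pairs $a,b$ in $\varphi_{\mathrm{ext}}$, choose one with $b=\varphi^{m}(a)$ and $m$ \emph{minimal}. If $m=1$ you are done. Otherwise minimality, together with the first nonstandard and first crossing conditions, forces $y:=\alpha^{-1}(\varphi(a))\in\varphi_{\mathrm{int}}$: this hands you explicit witnesses $x=\varphi(a)$, $y$. With these fixed, the second and third annular-crossing conditions for $\lambda_{x,y}$ show that the $\alpha$-orbits of the \emph{long} arc (the $c$ with $\varphi|_{\{a,b,c\}}=(a,b,c)$) stay on that arc and that $\alpha$ restricted there is disc-noncrossing relative to $\lambda_{x,y}|_I$. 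Now Lemma~\ref{lemma: neighbours, disc} applies to that restriction; since $\lambda_{x,y}$ and $\varphi$ agree on the long arc except at $\varphi^{-1}(a)$, one of the two $k$'s it produces satisfies $\alpha^{-1}(k)=\varphi(k)$. In your example this minimality step immediately selects $a=4$, $b=1$, $m=1$, and $k=4$.
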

\begin{proof}
Let \(a\) and \(b\) share cycles of both \(\varphi\) and \(\pi\).  We will denote this cycle of \(\varphi\) by \(\varphi_{\mathrm{ext}}\).  If there are points \(c\) and \(d\) in the other cycle of \(\varphi\) such that \(\left.\alpha\right|_{\left\{a,b,c,d\right\}}=\left(a,c,b,d\right)\), then \(\pi\) is annular nonstandard (second condition).  Thus we can find \(a,b\in\varphi_{\mathrm{ext}}\) such that \(b=\alpha^{-1}\left(a\right)\).

We choose \(a\) and \(b\) such that, in addition, \(b=\varphi^{m}\left(a\right)\) for the minimal positive integer \(m\).  If \(b\neq\varphi\left(a\right)\), then let \(x=\varphi\left(a\right)\).  The point \(x\) must be in a different cycle of \(\alpha\) from \(a\) and \(b\): \(\left.\varphi\right|_{\left\{a,b,x\right\}}=\left(a,x,b\right)\), so \(\left.\alpha\right|_{\left\{a,b,x\right\}}=\left(a,x,b\right)\) would be disc nonstandard.  By the first annular-noncrossing condition and the minimality of \(m\), \(y=\alpha^{-1}\left(x\right)\in\varphi_{\mathrm{int}}\).

Consider an element \(c\) with \(\varphi_{\left\{a,b,c\right\}}=\left(a,b,c\right)\).  Element \(c\) cannot share a cycle with \(x\) (first annular-noncrossing condition), so if \(I\) is the union of the orbits of such \(c\), then \(\left.\alpha\right|_{I}\in S_{\mathrm{disc-nc}}\left(\left.\lambda_{x,y}\right|_{I}\right)\) (second and third annular-noncrossing conditions).  This means that all \(d\) sharing cycles with a \(c\in I\) have \(\left.\lambda_{x,y}\right|_{\left\{a,b,d\right\}}=\left(a,b,d\right)\) (third annular-noncrossing condition), and thus also have \(\varphi_{\left\{a,b,d\right\}}=\left(a,b,d\right)\) (and \(\left.\alpha\right|_{I}\) is the restriction of \(\alpha\) to \(I\)).

We can then apply Lemma~\ref{lemma: neighbours, disc} to \(\left.\lambda_{x,y}\right|_{I}\), so we must be able to find two distinct \(k\in I\) such that \(\alpha^{-1}\left(k\right)=\left.\lambda_{x,y}\right|_{I}\left(k\right)\).  At least one of these must not be \(\varphi^{-1}\left(a\right)\), and since \(\lambda_{x,y}\) and \(\varphi\) agree on all other \(c\in I\), we have the desired \(k\).
\end{proof}

The following lemma shows that, for independent \(O_{1},\ldots,O_{C}\) indexed by some set of colours \(\left[C\right]\), in the expansion of an expression of the form
\begin{multline*}
\mathbb{E}\left[\mathrm{tr}\left(O_{w\left(1\right)}^{T}X_{1}O_{w\left(1\right)}\cdots O_{w\left(n_{1}\right)}^{T}X_{n_{1}}O_{w\left(n_{1}\right)}\right)\cdots\right.\\\left.\mathrm{tr}\left(O_{w\left(n_{1}+\cdots+n_{r-1}\right)}^{T}X_{n_{1}+\cdots+n_{r-1}+1}O_{w\left(n_{1}+\cdots+n_{r-1}\right)}\right)\right)]
\end{multline*}
according to Proposition~\ref{proposition: genus expansion}, the cycles of any \(K\left(\varphi,\delta_{\varepsilon}\alpha\delta_{\varepsilon}\right)\) indexing the \(X_{k}\) have a consistent colour.  This means that any single trace appearing in a trace along \(K\left(\varphi,\delta_{\varepsilon}\alpha\delta_{\varepsilon}\right)\) must satisfy the hypotheses of the limit distributions of that algebra.

\begin{lemma}
\label{lemma: colours}
Let \(n_{1},\ldots,n_{r}\) be even positive integers, and let \(n:=n_{1}+\cdots+n_{r}\), let \(\delta^{\prime}:k\mapsto\left(-1\right)^{k}k\), and let \(w:\left[n\right]\rightarrow\left[C\right]\) be a word in colours \(\left[C\right]\) such that for positive integer \(k\), \(w\left(2k-1\right)=w\left(2k\right)\).  Then for any \(\alpha\in PM_{\mathrm{alt}}\left(\left[n\right]\right)\), the permutation \(K\left(\varphi,\delta^{\prime}\alpha\delta^{\prime}\right)^{-1}\) takes odd integers to odd integers, and for any odd integer \(k\), \(w\left(\left|K\left(\varphi,\delta^{\prime}\pi\delta^{\prime}\right)^{-1}\left(k\right)\right|\right)=w\left(\left|k\right|\right)\).
\end{lemma}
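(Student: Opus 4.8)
The plan is to work directly with the explicit form of the permutation. Taking $\varepsilon(k)=(-1)^{k}$ so that $\delta'=\delta_{\varepsilon}$, the identity established in the proof of Proposition~\ref{proposition: genus expansion} gives
\[
K\left(\varphi,\delta'\alpha\delta'\right)^{-1}=\varphi_{-}^{-1}\,\delta'\,\alpha\,\delta'\,\varphi_{+},
\]
where $\varphi=\left(1,\ldots,n_{1}\right)\cdots\left(n_{1}+\cdots+n_{r-1}+1,\ldots,n\right)$, $\varphi_{+}=\varphi$ (extended by the identity on $-\left[n\right]$), $\varphi_{-}=\delta\varphi\delta$ (the identity on $\left[n\right]$), and $\alpha=\pi_{-}\delta\pi_{+}$ for pairings $\pi_{\pm}$. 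I would then follow a single element of $\pm\left[n\right]$ with odd absolute value through the five maps $\varphi_{+},\delta',\alpha,\delta',\varphi_{-}^{-1}$, tracking at each stage its sign, the parity of its absolute value, and its colour under $w$.

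The preliminary observations are elementary. Since every $n_{i}$ is even, the first entry $n_{1}+\cdots+n_{i-1}+1$ of each cycle of $\varphi$ is odd and the last entry $n_{1}+\cdots+n_{i}$ is even; hence $\varphi_{+}$ sends every odd $k\in\left[n\right]$ to $k+1$ (an odd entry is never last in its cycle), and $\varphi_{-}^{-1}$ sends $-\ell$, for $\ell\in\left[n\right]$ even, to $-\left(\ell-1\right)$ (an even entry is never first) while fixing every positive integer. Moreover $\delta'$ interchanges $k$ and $-k$ when $\left|k\right|$ is odd and fixes $k$ when $\left|k\right|$ is even, $\varphi_{+}$ fixes $-\left[n\right]$ pointwise, and $\alpha$ is alternating with $\alpha\left(m\right)=-\pi_{+}\left(m\right)$ for $m>0$ (Lemma~\ref{lemma: loops}).

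Now take $k\in\pm\left[n\right]$ with $\left|k\right|$ odd. Applying $\varphi_{+}$ and then $\delta'$ produces a positive integer $m$: if $k>0$ then $m=k+1$ is even, and if $k<0$ then $m=\left|k\right|$ is odd, and in both cases $w\left(m\right)=w\left(\left|k\right|\right)$ (using $w\left(2j-1\right)=w\left(2j\right)$ in the first case). Next $\alpha\left(m\right)=-\pi_{+}\left(m\right)$; applying $\delta'$ gives $-\pi_{+}\left(m\right)$ if $\pi_{+}\left(m\right)$ is even and $\pi_{+}\left(m\right)$ if $\pi_{+}\left(m\right)$ is odd; applying $\varphi_{-}^{-1}$ then gives $-\left(\pi_{+}\left(m\right)-1\right)$ in the first case and leaves $\pi_{+}\left(m\right)$ fixed in the second. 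Hence $\left|K\left(\varphi,\delta'\alpha\delta'\right)^{-1}\left(k\right)\right|$ equals $\pi_{+}\left(m\right)$ or $\pi_{+}\left(m\right)-1$ and is in either case odd, which is the first claim; since $K\left(\varphi,\delta'\alpha\delta'\right)^{-1}$ is a bijection of the finite set $\pm\left[n\right]$, it must then carry the even elements to even elements as well. For the colour claim, $w\left(\left|K\left(\varphi,\delta'\alpha\delta'\right)^{-1}\left(k\right)\right|\right)=w\left(\pi_{+}\left(m\right)\right)$ — directly if $\pi_{+}\left(m\right)$ is odd, and again by $w\left(2j-1\right)=w\left(2j\right)$ if it is even — while $w\left(\pi_{+}\left(m\right)\right)=w\left(m\right)$ since $\pi_{+}$ pairs elements within one colour class; combining with $w\left(m\right)=w\left(\left|k\right|\right)$ yields the assertion.

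The proof is essentially bookkeeping, so I do not anticipate a genuine obstacle; the points that need care are (i) fixing the convention $\delta'=\delta_{\varepsilon}$ with $\varepsilon(k)=(-1)^{k}$ so that the formula for $K\left(\varphi,\delta'\alpha\delta'\right)^{-1}$ applies; (ii) exploiting the evenness of the $n_{i}$ so that $\varphi_{+}$ acts as $k\mapsto k+1$ on the odd entries and $\varphi_{-}^{-1}$ as $-\ell\mapsto-\left(\ell-1\right)$ on the even entries, each fixing one half of $\pm\left[n\right]$, which is exactly what lets the composition take an odd element back to an odd one; and (iii) noticing that only $\pi_{+}$ — not $\pi_{-}$ — enters the image of an odd element, so that the colour statement needs only $w\circ\pi_{+}=w$, which holds because the relevant $\alpha$ is the colour-split premap $\alpha_{1}\cdots\alpha_{C}$ with $\alpha_{c}\in PM_{\mathrm{alt}}\left(\pm w^{-1}\left(c\right)\right)$ arising in the application of Proposition~\ref{proposition: genus expansion}.
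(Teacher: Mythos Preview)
Your argument is correct and follows the same route as the paper: both track an odd element through the five factors of $K(\varphi,\delta'\alpha\delta')^{-1}=\varphi_{-}^{-1}\delta'\alpha\delta'\varphi_{+}$, checking sign, parity, and colour at each step. The only cosmetic difference is that the paper organizes the case split by whether $\alpha$ preserves parity, whereas you compute explicitly via $\alpha(m)=-\pi_{+}(m)$ from the decomposition $\alpha=\pi_{-}\delta\pi_{+}$; your remark~(iii), that the colour claim tacitly requires $\alpha$ to respect $w$, is precisely the hypothesis the paper uses implicitly when it asserts that ``none of the other factors change the colour''.
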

\begin{proof}
Let \(k\in\pm\left[n\right]\) be odd, and consider the action of each permutation in \(\varphi_{-}^{-1}\delta^{\prime}\alpha\delta^{\prime}\varphi_{+}\left(k\right)\).  If \(\alpha\) does not change the parity, then neither or both \(\delta^{\prime}\) terms act.  Since \(\alpha\) is alternating, the sign is changed, so neither or both of \(\varphi_{+}\) and \(\varphi_{-}^{-1}\) act, and the parity is ultimately preserved.  Similarly, if \(\alpha\) changes the parity, exactly one of the \(\delta^{\prime}\) terms acts, so the sign is unchanged and exactly one of \(\varphi_{+}\) and \(\varphi_{-}^{-1}\) acts, again preserving the parity.

If \(\varphi_{+}\) acts, it takes \(k\) to the even number sharing its colour, and if \(\varphi_{-}^{-1}\) acts, it takes the integer from an even number to the odd number sharing its colour.  None of the other factors change the colour.
\end{proof}

\begin{proposition}
\label{proposition: first-order}
Associate with each colour \(c\in\left[C\right]\) an algebra of random matrices \(A_{c}\) with first-order limit distribution, and assume that the \(A_{c}\) are either independent or collectively possess a limit distribution.  For each \(c\in\left[C\right]\) let \(O_{c}\) be an independent Haar-distributed orthogonal matrix independent from any matrix in any of the \(A_{c}\).  Then the algebras \(O_{c}^{T}A_{c}O_{c}:=\left\{O_{c}^{T}XO_{c}:X\in A_{c}\right\}\) are free.
\end{proposition}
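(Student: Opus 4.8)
The plan is to check the definition of freeness directly, using the genus expansion to evaluate the mixed moments. Since $\mathrm{tr}(O^{T}XO)=\mathrm{tr}(X)$ holds identically and $O_{c}$ is independent of $A_{c}$, we have $\lim_{N}\mathbb{E}(\mathrm{tr}(O_{c}^{T}XO_{c}))=\lim_{N}\mathbb{E}(\mathrm{tr}(X))$, so the centred elements of $O_{c}^{T}A_{c}O_{c}$ are exactly the $O_{c}^{T}\mathaccent"7017{X}O_{c}$ with $\mathaccent"7017{X}$ centred in $A_{c}$; the multicolour form of Proposition~\ref{proposition: genus expansion} also shows that every mixed moment $\mathbb{E}(\mathrm{tr}_{\varphi}(\cdots))$ converges, so the algebras $O_{c}^{T}A_{c}O_{c}$ indeed possess a joint limit distribution. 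It thus remains to prove that, whenever $c_{1}\neq c_{2}\neq\cdots\neq c_{m}$ and each $\mathaccent"7017{X}_{i}$ is centred in $A_{c_{i}}$,
\[\lim_{N\to\infty}\mathbb{E}\bigl[\mathrm{tr}\bigl(O_{c_{1}}^{T}\mathaccent"7017{X}_{1}O_{c_{1}}\cdots O_{c_{m}}^{T}\mathaccent"7017{X}_{m}O_{c_{m}}\bigr)\bigr]=0.\]

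I would realise this trace as $\mathrm{tr}_{\varphi}(O_{w(1)}^{\varepsilon(1)}Y_{1},\ldots,O_{w(2m)}^{\varepsilon(2m)}Y_{2m})$ with $n=2m$, $\varphi=(1,2,\ldots,2m)$, colours $w(2i-1)=w(2i)=c_{i}$, signs $\varepsilon(2i-1)=-1$ and $\varepsilon(2i)=1$ (so $\delta_{\varepsilon}$ is the map $\delta'$ of Lemma~\ref{lemma: colours}), and matrices $Y_{2i-1}=\mathaccent"7017{X}_{i}$, $Y_{2i}=I$. Proposition~\ref{proposition: genus expansion} then expresses the expectation as a finite sum over $\alpha=\alpha_{1}\cdots\alpha_{C}$ with $\alpha_{c}\in PM_{\mathrm{alt}}(\pm w^{-1}(c))$ of
\[N^{\chi(\varphi,\delta_{\varepsilon}\alpha\delta_{\varepsilon})-2}\,\prod_{c}\mathrm{wg}(\lambda(\alpha_{c}))\;\mathbb{E}\bigl[\mathrm{tr}_{K(\varphi,\delta_{\varepsilon}\alpha\delta_{\varepsilon})^{-1}/2}(Y_{1},\ldots,Y_{2m})\bigr].\]
Each $\mathrm{wg}(\lambda(\alpha_{c}))=O(1)$ by Theorem~\ref{theorem: leading order Weingarten}; by Lemma~\ref{lemma: colours} every cyclic trace occurring in $\mathrm{tr}_{K(\varphi,\delta_{\varepsilon}\alpha\delta_{\varepsilon})^{-1}/2}(\ldots)$ is a word in the $\mathaccent"7017{X}_{i}$ of a single colour, so (using that the $A_{c}$ are independent or jointly have a limit distribution) the last factor is a product of normalized traces whose expectations converge, hence is $O(1)$. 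Since $\varphi$ has one cycle, $\chi(\varphi,\delta_{\varepsilon}\alpha\delta_{\varepsilon})\leq 2$ by Lemma~\ref{lemma: Euler characteristic}, so each term is $O(N^{\chi(\varphi,\delta_{\varepsilon}\alpha\delta_{\varepsilon})-2})$ and only the terms with $\chi(\varphi,\delta_{\varepsilon}\alpha\delta_{\varepsilon})=2$ survive the limit. By Theorem~\ref{theorem: unoriented noncrossing} these are exactly the $\alpha$ for which $\beta:=\delta_{\varepsilon}\alpha\delta_{\varepsilon}$ does not connect $[2m]$ to $-[2m]$ and $\widehat{\alpha}:=\beta|_{[2m]}$ lies in $S_{\mathrm{disc-nc}}(\varphi)$; here $\widehat{\alpha}$ is an honest permutation of $[2m]$ (since $\beta$ does not connect), with no fixed point, being conjugate via the involution $\delta_{\varepsilon}$ to the sign-reversing $\alpha$.

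Next I would show that every such surviving term in fact vanishes in the limit. Being built from $\alpha$ by conjugation by $\delta_{\varepsilon}$ and restriction (both of which preserve $|\cdot|$, hence the colour word $w$), the permutation $\widehat{\alpha}$ preserves colour: $w(\widehat{\alpha}(k))=w(k)$ for all $k\in[2m]$. Applying Lemma~\ref{lemma: neighbours, disc} to $\widehat{\alpha}\in S_{\mathrm{disc-nc}}(\varphi)$ (which has no fixed points) gives at least two distinct $k\in[2m]$ with $\widehat{\alpha}^{-1}(k)=\varphi(k)$. If such a $k$ is even, $k=2i$, then $\widehat{\alpha}(\varphi(2i))=2i$ forces $w(\varphi(2i))=w(2i)$, i.e.\ $c_{i+1}=c_{i}$ with subscripts read modulo $m$; alternation excludes this except possibly for the single index $k=2m$, so at least one of the two available $k$'s is odd, say $k=2i-1$, with $\widehat{\alpha}(2i)=2i-1$. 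Since $\beta$ does not connect $[2m]$ to $-[2m]$ we have $\beta(2i)=\widehat{\alpha}(2i)=2i-1$, and since $\varphi_{+}$ acts as $\varphi$ on $[2m]$ while $\varphi_{-}$ fixes $[2m]$,
\[K(\varphi,\beta)^{-1}(2i-1)=\varphi_{-}^{-1}\beta\varphi_{+}(2i-1)=\varphi_{-}^{-1}\beta(2i)=\varphi_{-}^{-1}(2i-1)=2i-1,\]
so $2i-1$ is a fixed point of $K(\varphi,\beta)^{-1}$, hence a singleton particular cycle. Consequently $\mathrm{tr}_{K(\varphi,\beta)^{-1}/2}(Y_{1},\ldots,Y_{2m})$ contains the factor $\mathrm{tr}(Y_{2i-1})=\mathrm{tr}(\mathaccent"7017{X}_{i})$, so $\mathbb{E}[\mathrm{tr}_{K(\varphi,\beta)^{-1}/2}(\ldots)]$ factors (up to lower-order cumulant terms) with a factor $\mathbb{E}(\mathrm{tr}(\mathaccent"7017{X}_{i}))\to 0$ and all other factors bounded; hence the term tends to $0$.

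Summing the finitely many contributions — each $\chi=2$ term tending to $0$, each $\chi<2$ term being $O(N^{\chi-2})=o(1)$ — yields the claimed vanishing, and therefore freeness. The individual computations are elementary, so the real work is bookkeeping: keeping the conjugations by $\delta_{\varepsilon}$, the factors $\varphi_{\pm}$, the colour word $w$, and the disc-noncrossing characterisation of Theorem~\ref{theorem: unoriented noncrossing} straight simultaneously; verifying cleanly that a single vanishing trace factor suppresses the whole expectation even when several traces interact through higher joint cumulants (such corrections are of strictly lower order); and confirming that the $\alpha$ which split into disconnected components contribute only at order $N^{\chi-2}$ with $\chi<2$ and so need no separate treatment.
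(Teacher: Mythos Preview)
Your approach is essentially the paper's: apply the genus expansion to the alternating centred word, use Theorem~\ref{theorem: unoriented noncrossing} to reduce surviving terms to disc-noncrossing $\widehat{\alpha}$, invoke Lemma~\ref{lemma: neighbours, disc} to find adjacent $k,\varphi(k)$ in a common cycle, rule out even $k$ by the colour alternation (except possibly $k=2m$, handled by taking the second $k$), and conclude that an odd such $k$ forces a singleton cycle in $K(\varphi,\beta)^{-1}$ carrying a centred $\mathaccent"7017{X}_{i}$.

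One point you pass over too quickly: asserting that ``every mixed moment $\mathbb{E}(\mathrm{tr}_{\varphi}(\cdots))$ converges'' is not enough to conclude that the $O_{c}^{T}A_{c}O_{c}$ possess a joint first-order limit distribution in the paper's sense, which also requires all higher classical cumulants of normalized traces to vanish asymptotically. The paper devotes its first paragraph to this, expanding the $r$th cumulant via Proposition~\ref{proposition: cumulants}, using Lemma~\ref{lemma: colours} to see that each $Z_{j}=\mathrm{tr}_{\sigma_{j}^{-1}}(\cdots)$ is monochromatic, and then observing that in $\mathbb{E}(Z_{1}\cdots Z_{s})=\sum_{\rho}k_{\rho}(Z_{1},\ldots,Z_{s})$ only the singleton partition survives (higher cumulants vanish either by independence of the $A_{c}$ or by the assumed joint limit distribution), so that the $r$th normalized cumulant is $O(N^{-r})\cdot O(1)$ after the $\chi$-bound. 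This same cumulant expansion is also what makes rigorous your parenthetical ``up to lower-order cumulant terms'' when you factor out $\mathbb{E}(\mathrm{tr}(\mathaccent"7017{X}_{i}))$; you should state it explicitly rather than defer it to bookkeeping.
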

\begin{proof}
The \(r\)th cumulant of normalized traces (including the first) of a product of elements of the \(O_{c}^{T}A_{c}O_{c}\) may be expressed as \(N^{-r}\) times the value in Proposition~\ref{proposition: cumulants}.  The trace over \(K\left(\varphi,\delta^{\prime}\alpha\delta^{\prime}\right)^{-1}/2\) is a product of a number of traces, each of which is a classical random variable, and each associated to one colour \(c\in\left[C\right]\) according to Lemma~\ref{lemma: colours}.  Let us call them \(Z_{1},\ldots,Z_{r}\).  Then
\[\mathbb{E}\left(Z_{1},\ldots,Z_{r}\right)=\sum_{\rho\in{\cal P}\left(r\right)}k_{\rho}\left(Z_{1},\ldots,Z_{r}\right)\textrm{.}\]
Any cumulant of independent \(Z_{k}\) vanishes (if the \(A_{c}\) are independent), and any cumulant \(k_{s}\) for \(s>1\) vanishes asymptotically as a condition of the first-order limit distribution of the algebra associated to its colour (or the full algebra).  Thus the only remaining term is the one associated with the partition of singlets \(\rho=0_{r}\):
\[\lim_{N\rightarrow\infty}k_{1}\left(Z_{1}\right)\cdots k_{r}\left(Z_{r}\right)=\lim_{N\rightarrow\infty}\mathbb{E}\left(Z_{1}\right)\cdots\mathbb{E}\left(Z_{r}\right)\textrm{.}\]
Each expected value has a finite \(N\rightarrow\infty\) limit, again a condition of the first-order limit distribution.  So if \(r=1\), the \(N\rightarrow\infty\) limit exists, and if \(r>1\), it vanishes.

Let \(w:\left[n\right]\rightarrow\left[C\right]\) be an alternating word in the colours, and for \(1\leq k\leq n\), let \(X_{k}\in A_{w\left(k\right)}\) be a centred matrix (i.e. \(\mathbb{E}\left(\mathrm{tr}\left(X_{k}\right)\right)=0\)).  Then, letting \(w^{\prime}\left(k\right)=w\left(\lceil\frac{k}{2}\rceil\right)\), \(\varphi=\left(1,\ldots,2n\right)\), and \(\delta^{\prime}:k\mapsto\left(-1\right)^{k}k\), we have:
\begin{multline}
\mathbb{E}\left[\mathrm{tr}\left(O_{w\left(1\right)}^{T}X_{1}O_{w\left(1\right)}\cdots O_{w\left(n\right)}^{T}X_{n}O_{w\left(n\right)}\right)\right]\\=\sum_{\substack{\alpha=\alpha_{1}\cdots\alpha_{C}\\\alpha_{c}\in PM_{\mathrm{alt}}\left(\pm w^{\prime-1}\left(c\right)\right)}}N^{\chi\left(\varphi,\delta^{\prime}\alpha\delta^{\prime}\right)-2}\mathrm{wg}\left(\lambda\left(\alpha_{1}\right)\right)\cdots\mathrm{wg}\left(\lambda\left(\alpha_{C}\right)\right)\\\times\mathbb{E}\left(\mathrm{tr}_{K\left(\varphi,\delta^{\prime}\alpha\delta^{\prime}\right)^{-1}/2}\left(X_{1},I,\ldots,X_{n},I\right)\right)\textrm{.}
\label{formula: first-order}
\end{multline}
For a term surviving as \(N\rightarrow\infty\), we have \(\chi\left(\varphi,\delta^{\prime}\alpha\delta^{\prime}\right)=2\) and hence \(\left.\delta^{\prime}\alpha\delta^{\prime}\right|_{\left[2n\right]}\in S_{\mathrm{disc-nc}}\left(\varphi\right)\).  Being alternating each of its cycles must contain more than one element, so by Lemma~\ref{lemma: neighbours, disc}, there must be a \(k\in\left[2n\right]\) such that \(\delta^{\prime}\alpha^{-1}\delta^{\prime}\left(k\right)=\varphi\left(k\right)\).  For even \(k\neq 2n\), \(w^{\prime}\left(\varphi\left(k\right)\right)\neq w^{\prime}\left(k\right)\), so \(\delta^{\prime}\alpha^{-1}\delta^{\prime}\) cannot connect \(k\) and \(\varphi\left(k\right)\).  Thus \(k\) is odd (or \(k=2n\); if so, we choose the other \(k\)).  Then \(K\left(\varphi,\delta^{\prime}\alpha\delta^{\prime}\right)\left(k\right)=k\), so cycle \(\left(k\right)\) appears in its cycle decomposition, so \(Z_{s}=\mathrm{tr}\left(X_{\frac{k+1}{2}}\right)\) for some \(s\).  Since \(\mathbb{E}\left[\mathrm{tr}\left(X_{\frac{k+1}{2}}\right)\right]=0\), the term in (\ref{formula: first-order}) associated to this \(\alpha\) must vanish.  Thus (\ref{formula: first-order}) vanishes asymptotically, proving the result.
\end{proof}

\begin{proposition}
\label{proposition: limit distribution}
Associate with each colour \(c\in\left[C\right]\) an independent algebra of random matrices \(A_{c}\) with second-order limit distribution.  For each \(c\in\left[C\right]\) let \(O_{c}\) be an independent Haar-distributed orthogonal matrix independent from any matrix in any of the \(A_{c}\).  Then the algebra generated by the \(O_{c}^{T}A_{c}O_{c}\) has a second-order limit distribution.
\end{proposition}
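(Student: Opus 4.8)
The plan is to derive the statement from Proposition~\ref{proposition: cumulants} and Lemma~\ref{lemma: colours}, reducing it to an order estimate essentially identical to the one already carried out in the proof of Proposition~\ref{proposition: cumulants}. A general element of the algebra generated by the $O_c^T A_c O_c$ is a linear combination of words $O_{w(1)}^T X_1 O_{w(1)}\cdots O_{w(m)}^T X_m O_{w(m)}$ with $X_j\in A_{w(j)}$ (consecutive factors of the same colour may be amalgamated via $O_c^T XO_c\cdot O_c^T YO_c=O_c^T(XY)O_c$, though this is not needed). Writing $r$ such words as alternating products $O_{w'(1)}^{\varepsilon(1)}Z_1\cdots$ with $Z_{2j-1}=X_j$, $Z_{2j}=I$, colours $w'(2j-1)=w'(2j)$, and signs $\varepsilon(2j-1)=-1$, $\varepsilon(2j)=1$ (so $\delta_\varepsilon=\delta'$), puts the $r$th cumulant of the unnormalized traces of these words into the exact form of Proposition~\ref{proposition: cumulants}, with $\varphi$ the product of the $r$ corresponding cycles, each of even length. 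Since $w'$ satisfies $w'(2k-1)=w'(2k)$, Lemma~\ref{lemma: colours} shows that every particular cycle $\sigma_k$ of $K(\varphi,\delta'\alpha\delta')$ consists of odd integers of a single colour, so each $\mathrm{tr}_{\sigma_k^{-1}}(Z_1,\ldots)$ is the trace of a word in the $X_j$ of one colour (the $I$'s at even positions disappear).

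With this in hand I would evaluate the cumulants $k_\tau(\mathrm{tr}_{\sigma_1^{-1}}(\cdots),\ldots,\mathrm{tr}_{\sigma_s^{-1}}(\cdots))$ appearing in the formula of Proposition~\ref{proposition: cumulants}. Each argument is now a monochromatic trace; since the $A_c$ are independent, $k_\tau$ vanishes unless every block of $\tau$ consists of traces sharing one colour, in which case it factors into cumulants taken inside the individual $A_c$. Because each $A_c$ has a second-order limit distribution, those factors satisfy exactly the estimates used in Proposition~\ref{proposition: cumulants}: a singleton block gives $\mathbb{E}(\mathrm{tr}(\cdot))$, which converges; a pair gives $N^{-2}k_2(\mathrm{Tr}(\cdot),\mathrm{Tr}(\cdot))$, which is $O(N^{-2})$ with $\lim N^2(\cdot)$ existing; and a block of size $\geq 3$ gives a cumulant vanishing asymptotically. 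Consequently the order count in the proof of Proposition~\ref{proposition: cumulants} applies verbatim and yields $k_r(\mathrm{Tr},\ldots,\mathrm{Tr})=O(1)$ for all $r$, $\lim_{N\to\infty}k_2(\mathrm{Tr},\mathrm{Tr})$ exists, and $\lim_{N\to\infty}k_r(\mathrm{Tr},\ldots,\mathrm{Tr})=0$ for $r>2$; dividing by $N^r$ gives that $\lim_{N\to\infty}\mathbb{E}(\mathrm{tr}(\cdot))$ exists and $\lim_{N\to\infty}k_r(\mathrm{tr},\ldots,\mathrm{tr})=0$ for $r\geq 2$.

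It remains to package the limiting data. On the abstract algebra $A$ generated by the limit elements of the $O_c^T A_c O_c$, set $\phi_1(p)=\lim_{N\to\infty}\mathbb{E}(\mathrm{tr}(p))$ and $\phi_2(p,q)=\lim_{N\to\infty}k_2(\mathrm{Tr}(p),\mathrm{Tr}(q))$; these are well defined, linear, and bilinear since the finite-$N$ quantities are and the limits exist by the previous paragraph. We have $\phi_1(1_A)=\lim\mathbb{E}(\mathrm{tr}(I))=1$ (existence and freeness of the first-order structure is also Proposition~\ref{proposition: first-order}); $\phi_2$ is tracial in each argument because $\mathrm{Tr}(AB)=\mathrm{Tr}(BA)$ for matrices and $k_2$ is bilinear; and $\phi_2(1_A,a)=\phi_2(a,1_A)=0$ because $\mathrm{Tr}(I)=N$ is deterministic and so has vanishing covariance with everything. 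Hence $(A,\phi_1,\phi_2)$ is a second-order probability space and the $O_c^T A_c O_c$ possess a second-order limit distribution. If moreover each $A_c$ is closed under transpose, then so is the generated algebra, since $(O_c^T XO_c)^T=O_c^T X^T O_c$, and the above gives a real second-order limit distribution.

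The step needing the most care is the reduction in the first two paragraphs: the family $\{X_j\}$ taken across colours does not itself possess a second-order limit distribution—independence of the $A_c$ is strictly weaker than asymptotic freeness—so Proposition~\ref{proposition: cumulants} cannot simply be quoted as a black box. The content is to use its exact formula together with Lemma~\ref{lemma: colours} to see that only monochromatic traces ever occur, so that all needed cumulants are computable from the individual $A_c$ and their mutual independence, at which point the asymptotic estimates of Proposition~\ref{proposition: cumulants} become available.
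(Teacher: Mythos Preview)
Your proposal is correct and follows essentially the same route as the paper: expand the cumulant via the exact formula of Proposition~\ref{proposition: cumulants}, use Lemma~\ref{lemma: colours} to see that every trace appearing is monochromatic, invoke independence of the \(A_{c}\) to kill mixed-colour blocks of \(\tau\), and then feed the remaining monochromatic cumulants back into the order estimate of Proposition~\ref{proposition: cumulants} using the second-order limit distribution of each \(A_{c}\). Your write-up is considerably more explicit than the paper's (which compresses all of this into two sentences), and you correctly flag the subtlety that Proposition~\ref{proposition: cumulants} cannot be invoked as a black box since the collection \(\{X_{k}\}\) need not itself have a second-order limit distribution; one minor imprecision is that not every particular cycle \(\sigma_{k}\) consists of odd integers---some consist entirely of even integers---but those give \(\mathrm{tr}(I\cdots I)=1\), a deterministic quantity whose higher cumulants vanish, so the argument is unaffected.
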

\begin{proof}
Expanding a cumulant of terms in the algebras generated by the \(O_{c}^{T}A_{c}O_{c}\) in the form of Proposition~\ref{proposition: cumulants}, by Lemma~\ref{lemma: colours}, each trace appearing in this expansion is associated with one colour \(c\in\left[C\right]\), so any cumulant \(k_{\tau}\) in which traces of different colours appear in the same block of \(\tau\) vanishes.  The remaining cumulants must satisfy the convergence requirements as cumulants of an algebra with a second-order limit distribution.
\end{proof}

\begin{theorem}
Associate with each colour \(c\in\left[C\right]\) an algebra of random matrices \(A_{c}\) with second-order limit distribution, and assume that either the \(A_{c}\) are independent, or the algebra generated by the \(A_{c}\) has a second-order limit distribution.  For each \(c\in\left[C\right]\) let \(O_{c}\) be an independent Haar-distributed orthogonal matrix independent from any matrix in any of the \(A_{c}\).  Then the algebras \(O_{c}^{T}A_{c}O_{c}:=\left\{O_{c}^{T}XO_{c}:X\in A_{c}\right\}\) are free of real second-order.
\end{theorem}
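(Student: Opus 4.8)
The plan is to check the three requirements of real second-order freeness in turn. The existence of a real second-order limit distribution for the algebra generated by the \(O_{c}^{T}A_{c}O_{c}\) is essentially Proposition~\ref{proposition: limit distribution}: these algebras are closed under transpose, since \(\left(O_{c}^{T}XO_{c}\right)^{T}=O_{c}^{T}X^{T}O_{c}\) and each \(A_{c}\) is, and the index set is closed under negation; when the \(A_{c}\) are jointly distributed rather than independent, the same argument applies once one notes, via Lemma~\ref{lemma: colours}, that every trace occurring in the expansion of Proposition~\ref{proposition: cumulants} is monochromatic, so the cumulants \(k_{\tau}\) of such traces converge (and the higher ones vanish asymptotically) by the joint hypothesis.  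First-order freeness of the \(O_{c}^{T}A_{c}O_{c}\) is Proposition~\ref{proposition: first-order}.

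The remaining, and main, task is the second-order relation.  Take \(a_{i}=O_{c_{i}}^{T}Y_{i}O_{c_{i}}\) and \(b_{j}=O_{d_{j}}^{T}Z_{j}O_{d_{j}}\) centred and cyclically alternating, and compute \(\phi_{2}\left(a_{1}\cdots a_{p},b_{1}\cdots b_{q}\right)=\lim_{N}k_{2}\left(\mathrm{Tr}\left(a_{1}\cdots a_{p}\right),\mathrm{Tr}\left(b_{1}\cdots b_{q}\right)\right)\) from Proposition~\ref{proposition: cumulants} with \(r=2\), the two-cycle \(\varphi=\left(1,\ldots,2p\right)\left(2p+1,\ldots,2p+2q\right)\), and the interleaved-identity encoding \(Y_{1},I,\ldots,Y_{p},I,Z_{1},I,\ldots,Z_{q},I\) of Proposition~\ref{proposition: first-order} (so \(\varepsilon=\delta^{\prime}\) and the centred matrices sit at odd positions).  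The inequality chain in the proof of Proposition~\ref{proposition: cumulants} forces any term of order \(N^{0}\) to have \(\rho=\pi\) and \(\chi\left(\varphi,\delta^{\prime}\alpha\delta^{\prime}\right)\) as large as the connectivity of \(\alpha\) allows, with \(\tau\) contributing exactly the blocks needed to make \(\varphi\vee\pi\vee\tau_{\sigma}=1_{n}\).  This splits into two cases according to whether \(\alpha\) connects the two cycles of \(\varphi\).  If it does not, then \(\chi=4\), which forces each of the two factors of \(\alpha\) to restrict to a disc-noncrossing, fixed-point-free permutation (Theorem~\ref{theorem: unoriented noncrossing}); by Lemma~\ref{lemma: neighbours, disc} each side then carries at least two cycles of \(K\left(\varphi,\delta^{\prime}\alpha\delta^{\prime}\right)^{-1}\) of the form \(\left(k\right)\) with \(k\) odd (the colour argument of Proposition~\ref{proposition: first-order} excludes even \(k\), using cyclic alternation), hence at least two lone centred traces \(\mathrm{tr}\left(Y_{i}\right)\), resp.\ \(\mathrm{tr}\left(Z_{j}\right)\).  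Since the single pair of \(\tau\) reaches at most one cycle per side, some lone centred trace is a singleton of \(\tau\), and the term vanishes in the limit.

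If \(\alpha\) connects the two cycles of \(\varphi\), then \(\chi=2\), so \(\tau=0_{s}\), and for the sign \(\varepsilon\) supplied by Theorem~\ref{theorem: unoriented noncrossing} the restriction \(\left.\alpha\right|_{V_{1}\cup\varepsilon V_{2}}\) is annular-noncrossing and fixed-point-free.  If one of its cycles has two points sharing a cycle of \(\varphi\), Lemma~\ref{lemma: neighbours, annulus} again produces a lone centred trace, now necessarily a singleton of \(\tau=0_{s}\), and the term vanishes.  So the surviving \(\alpha\) are exactly those for which \(\left.\alpha\right|_{V_{1}\cup\varepsilon V_{2}}\) is a transversal pairing between the two discs of the annulus which is annular-noncrossing; these require \(\left|V_{1}\right|=\left|V_{2}\right|\), i.e.\ \(p=q\), and then there are precisely \(p\) of them for each of the two signs \(\varepsilon=\pm 1\), namely the cyclic rotations.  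For each such \(\alpha\) one has \(\pi_{+}=\pi_{-}\), so \(\lambda\left(\alpha\right)=\left(1,\ldots,1\right)\), \(C_{\pi,\pi,\pi}=1\), and \(N^{\chi-2}=1\); moreover the cycles of \(K\left(\varphi,\delta^{\prime}\alpha\delta^{\prime}\right)^{-1}/2\) pair each \(Y_{i}\) with one \(Z_{j}\), directly if \(\varepsilon\) is the first sign and with a transpose if it is the second, so the contribution converges to \(\prod_{i=1}^{p}\varphi\left(a_{i}b_{k-i}\right)\) (first family) or \(\prod_{i=1}^{p}\varphi\left(a_{i}b_{k+i}^{t}\right)\) (second family), where \(\varphi\left(a_{i}b_{j}\right)=\tau_{c_{i}}\left(y_{i}z_{j}\right)\) if \(c_{i}=d_{j}\) and \(0\) otherwise by first-order freeness.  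Summing over the \(2p\) diagrams reproduces exactly the asserted formula, and \(\phi_{2}=0\) when \(p\neq q\).

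The main obstacle is this last identification: that the annular-noncrossing, fixed-point-free, transversal \(\alpha\) are exactly the \(2p\) spoke configurations and nothing more, and that each contributes with coefficient exactly \(1\).  This requires tracking carefully how the two sign choices of Theorem~\ref{theorem: unoriented noncrossing} correspond to the \(\varphi\left(a_{i}b_{k-i}\right)\) and \(\varphi\left(a_{i}b_{k+i}^{t}\right)\) families, how the orientation of each cycle of \(K\left(\varphi,\delta^{\prime}\alpha\delta^{\prime}\right)^{-1}\) produces \(z_{j}\) versus \(z_{j}^{t}\), and why \(\pi_{+}=\pi_{-}\) (whence \(C_{\pi,\pi,\pi}\) collapses to \(1\)) on precisely these configurations.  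By contrast, the vanishing of every other term reduces to routine applications of Lemmas~\ref{lemma: neighbours, disc} and \ref{lemma: neighbours, annulus} together with the centring of the \(Y_{i}\) and \(Z_{j}\).
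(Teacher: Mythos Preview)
Your strategy matches the paper's proof closely: same setup via Proposition~\ref{proposition: cumulants}, same disconnected/connected case split, same use of Lemmas~\ref{lemma: neighbours, disc} and~\ref{lemma: neighbours, annulus} to produce singleton cycles carrying centred traces, and the same final identification of the surviving \(\alpha\) as spoke diagrams.

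One point to tighten: your claim that the inequality chain forces \(\rho=\pi\) at top order is not quite right in the disconnected case. With \(\chi=4\) one can also have \(\#\rho=\#\pi-1\) and \(\tau=0_{s}\), provided the extra merge in \(\rho\) connects the two sides (and stays within a colour class). The paper handles both possibilities at once by saying ``at least one of \(\rho\) or \(\tau_{\sigma}\) must'' connect, and then observing that in a surviving term this accounts for a single \(O\left(N^{-2}\right)\) factor. Your vanishing argument still goes through in the \(\rho\neq\pi\) case, in fact more easily, since then \(\tau=0_{s}\) and every singleton \(\left(k\right)\) with \(k\) odd is a singleton of \(\tau\); but you should not assert \(\rho=\pi\) outright.

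On the part you flag as the main obstacle: the paper does not appeal to a classification of annular-noncrossing transversal pairings but instead fixes the image \(\delta^{\prime}\alpha\delta^{\prime}\left(2p\right)\), distinguishes the two parities (these are your two signs \(\varepsilon\)), and shows by a short induction on \(i\in\left[2p\right]\) that \(\delta^{\prime}\alpha\delta^{\prime}\left(i\right)\) is forced, using the third annular-crossing condition at each step. This simultaneously gives the count \(2p\), the colour compatibility condition \(v\left(i\right)=w\left(k\mp i\right)\), and the cycles \(\left(i,2k-i-2\right)\) resp.\ \(\left(i,-2k-i\right)\) of \(K\left(\varphi,\delta^{\prime}\alpha\delta^{\prime}\right)^{-1}/2\), from which the \(\prod\phi_{1}\left(a_{i}b_{k-i}\right)\) and \(\prod\phi_{1}\left(a_{i}b_{k+i}^{t}\right)\) contributions (and their vanishing when colours mismatch, via Proposition~\ref{proposition: first-order}) follow directly. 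Your observation that \(\pi_{+}=\pi_{-}\) on these configurations is correct and is what makes each Weingarten cumulant collapse to \(1\).
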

\begin{proof}
We have first-order freeness by Proposition~\ref{proposition: first-order} and the existence of a second-order limit distribution by Proposition~\ref{proposition: limit distribution} (if the \(A_{c}\) are independent; if they collectively have a second-order limit distribution then by Proposition~\ref{proposition: cumulants}, so do the \(O_{c}^{T}A_{c}O_{c}\)).

Let \(v:\left[p\right]\rightarrow\left[C\right]\) and \(w:\left[q\right]\rightarrow\left[C\right]\) be cyclically alternating words in the colours, and let \(X_{1},\ldots,X_{p}\) and \(Y_{1},\ldots,Y_{q}\) be centred random matrices with \(X_{k}\in A_{v\left(k\right)}\), \(k\in\left[p\right]\), and \(Y_{k}\in A_{w\left(k\right)}\), \(k\in\left[q\right]\).  We let
\[w^{\prime}\left(k\right)=\left\{\begin{array}{ll}v\left(\lceil\frac{k}{2}\rceil\right)&k\in\left[2p\right]\\w\left(\lceil\frac{k}{2}-p\rceil\right)&k\in\left[2p+1,2q\right]\end{array}\right.\textrm{.}\]
Let
\[\varphi\left(1,\ldots,2p\right)\left(2p+1,\ldots,2p+2q\right)\textrm{,}\]
\[\varphi_{\mathrm{ext}}=\left(1,\ldots,2p\right),\varphi_{\mathrm{int}}=\left(2p+1,\ldots,2+2q\right)\textrm{,}\]
\[\varphi_{\mathrm{op}}=\left(1,\ldots,2p\right)\left(-2p-2q,\ldots,-2p-1\right)\textrm{.}\]
Let \(\delta^{\prime}:k\mapsto\left(-1\right)^{k}k\).  We have, in the notation of Proposition~\ref{proposition: cumulants}:
\begin{multline}
\label{formula: second-order expansion}
k_{2}\left(\mathrm{Tr}\left(O_{v\left(1\right)}^{T}X_{1}O_{v\left(1\right)}\cdots O_{v\left(p\right)}^{T}X_{p}O_{v\left(p\right)}\right),\right.\\\left.\mathrm{Tr}\left(O_{w\left(1\right)}^{T}Y_{1}O_{w\left(1\right)}\cdots O_{w\left(q\right)}^{T}Y_{q}O_{w\left(q\right)}\right)\right)
\\=\sum_{\substack{\alpha=\alpha_{1}\cdots\alpha_{C}\\\alpha_{c}\in PM_{\mathrm{alt}}\left(\pm w^{\prime-1}\left(c\right)\right)}}N^{\chi\left(\varphi,\delta^{\prime}\alpha\delta^{\prime}\right)-2}\sum_{\substack{\rho\in{\cal P}\left(n\right):\pi\preceq\rho\preceq\ker\left(w^{\prime}\right)\\\tau\in{\cal P}\left(s\right)\\\varphi\vee\rho\vee\tau_{\sigma}=1_{2p+2q}}}C_{\pi,\pi,\rho}\\\times k_{\tau}\left(\mathrm{tr}_{\sigma^{-1}_{1}}\left(X_{1},\ldots,I,X_{p},I,Y_{1},I,\ldots,Y_{q},I\right),\ldots,\right.\\\left.\mathrm{tr}_{\sigma^{-1}_{s}}\left(X_{1},\ldots,I,X_{p},I,Y_{1},I,\ldots,Y_{q},I\right)\right)\textrm{.}
\end{multline}
We consider the terms that survive as \(N\rightarrow\infty\).  If \(\alpha\) does not connect \(\pm\left[2p\right]\) and \(\pm\left[2p+1,2q\right]\), then at least one of \(\rho\) or \(\tau_{\sigma}\) must, in which case that partition must have at least one block of size \(2\).  In either case, this block corresponds to a factor which is \(O\left(N^{-2}\right)\), so in a surviving term, this is the only block of size greater than one in either partition.  Furthermore, we must have \(\chi\left(\varphi,\delta^{\prime}\alpha\delta^{\prime}\right)=4\) (so \(\delta^{\prime}\alpha\delta^{\prime}\) does not connect \(\left[2p\right]\) to \(-\left[2p\right]\) or \(\left[2p+1,2p+2q\right]\) to \(-\left[2p+1,2p+2q\right]\), with \(\left.\delta^{\prime}\alpha\delta^{\prime}\right|_{\left[2p\right]}\in S_{\mathrm{disc-nc}}\left(\varphi_{\mathrm{ext}}\right)\) and \(\left.\delta^{\prime}\alpha\delta^{\prime}\right|_{\left[2p+1,2p+2q\right]}\in S_{\mathrm{disc-nc}}\left(\varphi_{\mathrm{int}}\right)\)).  By Lemma~\ref{lemma: neighbours, disc}, we can then find two \(k\) in \(\left[2p\right]\) and two in \(\left[2p+1,2p+2q\right]\) such that \(\delta^{\prime}\alpha^{-1}\delta^{\prime}\left(k\right)=\varphi\left(k\right)\), all of which are odd (since for \(k\) even, \(w^{\prime}\left(\varphi\left(k\right)\right)\neq w^{\prime}\left(k\right)\), so they cannot be in the same cycle of \(\delta^{\prime}\alpha^{-1}\delta^{\prime}\)).  For each, \(K\left(\varphi,\delta^{\prime}\alpha\delta^{\prime}\right)^{-1}\left(k\right)=k\), so the cycle \(\left(k\right)\) appears in \(K\left(\varphi,\delta^{\prime}\alpha\delta^{\prime}\right)^{-1}\).  In addition, since in a surviving term, \(\tau_{\sigma}\) can have a block connecting at most two cycles of \(K\left(\varphi,\delta^{\prime}\pi\delta^{\prime}\right)^{-1}/2\), we must have at least one cycle \(\left(k\right)\) in its own block of \(\tau_{\sigma}\).  The term then contains the expected value of the trace of a centred matrix, so it vanishes.

Otherwise, \(\alpha\) connects \(\pm\left[2p\right]\) and \(\pm\left[2p+1,2q\right]\).  Highest order terms have \(\chi\left(\varphi,\delta^{\prime}\alpha\delta^{\prime}\right)=2\) (so by Theorem~\ref{theorem: unoriented noncrossing}, we have either \(\left.\delta^{\prime}\alpha\delta^{\prime}\right|_{\left[2p+2q\right]}\in S_{\mathrm{ann-nc}}\left(\varphi\right)\) or \(\left.\delta^{\prime}\alpha\delta^{\prime}\right|_{\varphi_{\mathrm{op}}}\in S_{\mathrm{ann-nc}}\left(\varphi_{\mathrm{op}}\right)\)) and have both \(\rho\) and \(\tau\) as small as possible.

Since each cycle of \(\alpha\) must contain more than one element, any surviving \(\alpha\) must be a pairing where each pair consists of an element from \(\pm\left[2p\right]\) and an element from \(\pm\left[2p+1,2p+2q\right]\).  If \(p\neq q\), there are no such pairings, so the covariance vanishes asymptotically as desired.  If \(p=q\), we will consider two cases: \(\alpha\left(2p\right)\) is odd, and \(\alpha\left(2p\right)\) is even.

If \(\alpha\left(2p\right)\) is odd, define \(k\) by \(\alpha\left(2p\right)=2k-1\).  Then the cycle \(\left(2p,-2k+1\right)\) appears in \(\alpha\), and hence the cycle \(\left(2p,2k-1\right)\) appears in \(\delta^{\prime}\alpha\delta^{\prime}\).

We show by induction that for \(i\in\left[2p\right]\), \(\delta^{\prime}\alpha\delta^{\prime}\left(i\right)=2k-i-1\) (taken in \(\left[2p+1,2p+2q\right]\), modulo \(2q\)).  Let \(a\) be the first integer in \(\left[2p\right]\) for which this does not hold.  Let \(b=2k-a-1\), let \(c=\delta^{\prime}\alpha\delta^{\prime}\left(a\right)\), and let \(d=\delta^{\prime}\alpha\delta^{\prime}\left(b\right)\).  We know that \(\left.\lambda_{2p,2k-1}\right|_{\left\{a,b,c,d\right\}}\left(b\right)=a\), since we know the partners of every element between (so \(c\) and \(d\) cannot be any of these elements).  We also know that \(d\in\left[2p\right]\) and \(c\in\left[2p+1,2p+2q\right]\), so \(\left.\lambda_{2p,2k-1}\right|_{\left\{a,b,c,d\right\}}=\left(a,d,c,b\right)\), which is the third annular-crossing condition.

Such a \(\alpha\) appears in (\ref{formula: second-order expansion}) only if \(w^{\prime}\left(i\right)=w^{\prime}\left(2k-i-2\right)\) for all \(i\in\left[2p\right]\), that is, if \(v\left(i\right)=w\left(k-i\right)\) for all \(i\in\left[p\right]\).  If so, we calculate that the cycles of \(K\left(\varphi,\delta^{\prime}\alpha\delta^{\prime}\right)^{-1}/2\) are \(\left(i,2k-i-2\right)\), \(1\leq i\leq 2p\), and the contribution of the term is:
\[\lim_{N\rightarrow\infty}\prod_{i=1}^{p}\mathbb{E}\left(X_{i}Y_{k-i}\right)\\=\lim_{N\rightarrow\infty}\prod\mathbb{E}\left(O_{v\left(i\right)}^{T}X_{i}O_{v\left(i\right)}O_{w\left(k-i\right)}^{T}Y_{k-i}O_{w\left(k-i\right)}\right)\textrm{.}\]
If there is an \(i\) such that \(v\left(i\right)\neq w\left(k-i\right)\), then the term \(\alpha\) does not appear in the sum, so we make take the contribution to be zero.  By Proposition~\ref{proposition: first-order},
\[\lim_{N\rightarrow\infty}\mathbb{E}\left(\mathrm{tr}\left(O_{v\left(i\right)}^{T}X_{i}O_{v\left(i\right)}O_{w\left(k-i\right)}^{T}Y_{k-i}O_{w\left(k-i\right)}\right)\right)=0\textrm{,}\]
which is again the desired contribution.

The case \(\alpha\left(2p\right)\) even is similar.  Define \(k\) by \(\alpha\left(2p\right)=2k\).  Then the cycle \(\left(2p,-2k\right)\) appears in \(\delta^{\prime}\alpha\delta^{\prime}\), connecting \(\left[2p\right]\) to \(-\left[2p+1,2p+2q\right]\), so \(\left.\delta^{\prime}\alpha\delta^{\prime}\right|_{\varphi_{\mathrm{op}}}\in S_{\mathrm{ann-nc}}\left(\varphi_{\mathrm{op}}\right)\).  By induction, \(\delta^{\prime}\alpha\delta^{\prime}\left(i\right)=-2k-i\).  There is nonzero contribution only if \(v\left(i\right)=w\left(k+i\right)\) for all \(i\in\left[p\right]\), and as above, in either case the contribution of the term is
\[\lim_{N\rightarrow\infty}\prod_{i=1}^{p}\mathbb{E}\left(O_{v\left(i\right)}^{T}X_{i}O_{v\left(i\right)}O_{w\left(k+i\right)}^{T}Y_{k+i}^{T}O_{w\left(k+i\right)}\right)\textrm{.}\]  The result follows.
\end{proof}

We note that conjugating by any orthogonal matrix does not change the value of traces, so we may also have one ensemble which is not conjugated.

Several corollaries follow:

\begin{corollary}
Any combination independent matrices drawn from orthogonally invariant distributions (including real Ginibre matrices, Gaussian orthogonal ensemble matrices, real Wishart matrices, and Haar-distributed orthogonal matrices) and one other ensemble (possibly constant matrices) are real second-order free.
\end{corollary}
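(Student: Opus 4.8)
The plan is to obtain the corollary directly from the preceding theorem by means of the standard device of realising an orthogonally invariant ensemble as a random orthogonal conjugate of an independent copy of itself. First I would record the elementary fact that if a random matrix \(A\) has an orthogonally invariant distribution and \(O\) is a Haar-distributed orthogonal matrix independent of \(A\), then \(O^{T}AO\) has the same distribution as \(A\): conditioning on \(O\), the matrix \(O^{T}\) is a fixed orthogonal matrix, so orthogonal invariance gives \(O^{T}AO\sim A\) conditionally on \(O\), hence unconditionally. Given independent ensembles \(A_{1},\dots,A_{C}\) with orthogonally invariant distributions and one further ensemble \(A_{0}\), I would then introduce fresh mutually independent copies \(A_{1}',\dots,A_{C}'\) (with \(A_{c}'\sim A_{c}\)) and fresh independent Haar-distributed orthogonal matrices \(O_{1},\dots,O_{C}\), all independent of each other and of \(A_{0}\). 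Since independence is preserved and every marginal is unchanged, the joint distribution of \((A_{0},A_{1},\dots,A_{C})\) coincides with that of \((A_{0},O_{1}^{T}A_{1}'O_{1},\dots,O_{C}^{T}A_{C}'O_{C})\), so it is enough to show the latter family is real second-order free.

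Next I would check that each ensemble named in the statement --- real Ginibre, Gaussian orthogonal, real Wishart, and Haar-distributed orthogonal matrices --- is orthogonally invariant (immediate from the definitions, using bilateral invariance for the Haar orthogonal ensemble) and possesses a real second-order limit distribution: the latter is established for the Ginibre, Gaussian orthogonal, and Wishart ensembles in \cite{2011arXiv1101.0422R}, while for Haar-distributed orthogonal matrices it follows from Proposition~\ref{proposition: genus expansion} and Proposition~\ref{proposition: cumulants} applied with all \(X_{k}=I\). An ``other'' ensemble of deterministic matrices whose normalised traces of words converge trivially has a real second-order limit distribution, with \(\phi_{2}=0\), once its transposes are adjoined to the algebra. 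With these hypotheses verified, the preceding theorem --- applied with \(A_{0}\) in the role of the single ensemble that is left unconjugated, as permitted by the remark following the theorem's proof, and using the branch of the hypothesis in which the conjugated ensembles are independent --- shows that \(A_{0},O_{1}^{T}A_{1}'O_{1},\dots,O_{C}^{T}A_{C}'O_{C}\) are free of real second order; transporting this conclusion back along the distributional identity from the first step yields that \(A_{0},A_{1},\dots,A_{C}\) are real second-order free.

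I expect the only point needing genuine care to be the bookkeeping in the reduction: one must make sure that on replacing each \(A_{c}\) by \(O_{c}^{T}A_{c}'O_{c}\) the new ingredients are truly mutually independent, so that the joint law of the whole collection (including \(A_{0}\)) is unchanged, and one must note that a non-invariant ``other'' ensemble such as constant matrices can be accommodated precisely because the theorem already allows exactly one unconjugated ensemble. All of the substantive work --- the genus expansion, the Weingarten asymptotics, the annular noncrossing analysis, and the identification of the surviving spoke diagrams --- lives in the theorem itself, so no new estimates are required at this stage.
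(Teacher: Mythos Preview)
Your proposal is correct and is precisely the argument the paper leaves implicit: the paper gives no separate proof of this corollary, simply stating that it follows from the main theorem together with the remark that one ensemble may be left unconjugated since conjugation does not change traces. Your reduction --- replacing each orthogonally invariant ensemble by an independent Haar-orthogonal conjugate of itself so that the main theorem applies, and leaving the one non-invariant ensemble unconjugated --- is exactly this intended step, and your verification that the named ensembles have the required second-order limit distributions supplies the only missing hypothesis check.
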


\begin{corollary}
An algebra of random matrices \(A\) with second-order limit distribution is free of real second-order from \(O^{T}AO\), where \(O\) is a Haar-distributed orthogonal matrix independent from \(A\).
\end{corollary}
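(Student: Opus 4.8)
The plan is to deduce this corollary from the preceding theorem by a simultaneous-conjugation trick, as foreshadowed by the remark that conjugating by an orthogonal matrix leaves all traces unchanged. First I would apply the theorem with $C=2$ and $A_{1}=A_{2}=A$: these two copies are of course not independent, but the algebra they jointly generate is simply $A$, which has a second-order limit distribution by hypothesis, so the second branch of the theorem's hypothesis (``the algebra generated by the $A_{c}$ has a second-order limit distribution'') is satisfied. Taking $O_{1},O_{2}$ to be independent Haar-distributed orthogonal matrices, independent from $A$, the theorem then yields that $O_{1}^{T}AO_{1}$ and $O_{2}^{T}AO_{2}$ are free of real second order.

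Next I would set $O:=O_{2}O_{1}^{T}$ and check that $O$ is again Haar-distributed and independent from $A$: conditionally on $(O_{1},A)$, right-invariance of Haar measure makes $O_{2}O_{1}^{T}$ distributed as $O_{2}$, i.e.\ Haar, and this holds for every value of $(O_{1},A)$, so unconditionally $O$ is Haar-distributed and independent from $A$. Then I would observe that simultaneous conjugation by $O_{1}$, that is the map $M\mapsto O_{1}MO_{1}^{T}$, sends $O_{1}^{T}BO_{1}\mapsto B$ and $O_{2}^{T}CO_{2}\mapsto O^{T}CO$ for all $B,C\in A$, leaves $\mathrm{Tr}$ (and hence $\mathrm{tr}$, $\phi_{1}$, $k_{2}$ of traces, and therefore $\phi_{2}$) of every word in these matrices invariant, and satisfies $(O_{1}MO_{1}^{T})^{T}=O_{1}M^{T}O_{1}^{T}$, so it is compatible with the transpose involution. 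Consequently the pair of algebras $(A,\,O^{T}AO)$ has exactly the same joint first- and second-order $*$-distribution as $(O_{1}^{T}AO_{1},\,O_{2}^{T}AO_{2})$.

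Since real second-order freeness of two subalgebras is a statement purely about the joint values of $\phi_{1}$ and $\phi_{2}$ on centred (cyclically) alternating products, together with the transpose structure, transporting this property along the distribution-preserving and transpose-compatible correspondence above shows that $A$ and $O^{T}AO$ are free of real second order, which is the assertion. I do not expect a genuine obstacle here: the only points requiring care are the Haar-invariance argument identifying the law of $O_{2}O_{1}^{T}$, and checking that the conjugation correspondence respects traces of \emph{all} words and the transpose involution, so that it really transports real second-order freeness rather than merely first-order freeness.
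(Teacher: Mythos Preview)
Your proposal is correct and follows precisely the approach the paper intends: the corollary is stated without proof, relying on the preceding remark that conjugating by any orthogonal matrix does not change traces, so one of the ensembles may be left unconjugated. Your argument spells out the details of that remark---applying the theorem with \(C=2\) and \(A_{1}=A_{2}=A\) under the ``collectively have a second-order limit distribution'' hypothesis, then simultaneously conjugating by \(O_{1}\) and verifying that \(O_{2}O_{1}^{T}\) is again Haar and independent from \(A\)---and these are exactly the points the paper leaves implicit.
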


\bibliography{paper}
\bibliographystyle{plain}

\end{document}